\def\@secnumfont{\bfseries}
\def\section{\@startsection{section}{1}%
  \z@{.7\linespacing\@plus\linespacing}{.5\linespacing}%
  {\bfseries\Large\scshape\centering}}
\def\subsection{\@startsection{subsection}{2}%
  \z@{.5\linespacing\@plus.7\linespacing}{-.5em}%
  {\normalfont\bfseries}}  
\patchcmd{\@thm}{\thm@headfont{\scshape}}{\thm@headfont{\bfseries}}{}{}
\newcommand{\Z}{\mathbb{Z}}
\newtheorem{thm}{Theorem}
\newtheorem{prop}{Proposition}[section]
\newtheorem{example}{Example}[section]
\newtheorem{lemma}{Lemma}[section]
\newtheorem{cor}{Corollary}[section]
\theoremstyle{definition}
\newtheorem{defn}{Definition}[section]
\newtheorem{remark}{Remark}[section]
\newtheorem*{thm*}{Theorem}
\numberwithin{equation}{section}
\newcounter{spec}
{\end{list}}
\newcommand{\Spec}{\operatorname{Spec}} %for Spec
\newcommand{\Spf}{\operatorname{Spf}} %for Spf
\newcommand{\Hom}{\operatorname{Hom}}      % For Hom
\newcommand{\Ext}{\operatorname{Ext}}      % For Ext
\newcommand{\iext}{\operatorname{\underline{Ext}}}
\newcommand{\Hp}{\operatorname{\cH_{p}^{\varphi}}}
\newcommand{\Hpp}{\operatorname{\cH_{p}^{\varpi}}}
\newcommand{\hp}{\operatorname{h_p}}   
\newcommand{\W}{\operatorname{W}}% Witt vectors
\newcommand{\D}{\operatorname{\mathbb{D}}}%Dieudonne functor
\newcommand{\K}{\operatorname{\mathbb{K}}}%mathbb K
\newcommand{\Mi}{\mathcal{M}_1}   % For category of 1-Motives
\newcommand{\gMi}{\mathcal{M}^{\text{gr}}_1} % 1-motive with good reduction
\newcommand{\Tcrys}{\operatorname{T_{crys}}} % for crystalline realisation of 1-motives
\newcommand{\Tcrysv}{\operatorname{T^{crys}}}
\newcommand{\TdR}{\operatorname{T_{dR}}} % for de Rham realisation of 1-motives
\newcommand{\TdRv}{\operatorname{T_{dR}^{\vee}}} % for de Rham CO-realisation of 1-motives
\newcommand{\Tsing}{\operatorname{T_{sing}}} % for singular realisation of 1-motives
\newcommand{\Tp}{\operatorname{T_{p}}} % for Tate module
\newcommand{\Tl}{\operatorname{T_{\ell}}} % for Tate module
\newcommand{\mup}{\mu_{p^{\infty}}} %for p-torsion elements in multiplication group
\newcommand{\ocp}{\cO_{\C_p}}
\newcommand{\Mod}{{\operatorname{Mod\text{}}}}
\newcommand{\Vect}{{\operatorname{Vect\text{}}}}
\newcommand{\Vp}{\operatorname{V_p}}
\newcommand{\Ainf}{\operatorname{A_{inf}}}
\newcommand{\BdR}{\operatorname{B_{dR}}}
\newcommand{\BdRp}{\operatorname{B^+_{dR}}}
\newcommand{\Bcris}{\operatorname{B_{cris}}}
\newcommand{\Bcrisp}{\operatorname{B^+_{cris}}}
\newcommand{\At}{\operatorname{A_2}}
\newcommand{\Bt}{\operatorname{B_2}}
\newcommand{\Bct}{\operatorname{B_{2,cris}}}
\newcommand{\Brigt}{\operatorname{\tilde{B}_{2,rig}}} % the rigidified ring of Bct
\newcommand{\DdR}{\operatorname{D_{dR}}}
\newcommand{\Dcris}{\operatorname{D_{cris}}}
\newcommand{\Ann}{\operatorname{Ann}}  %annhilator
\newcommand{\Div}{\operatorname{Div}}
\newcommand{\ihom}{{\rm\underline{Hom}}}  % For internal Hom
\newcommand{\C}{\mathbb{C}}     % For Complex numbers
\newcommand{\F}{\mathbb{F}}     
\newcommand{\Q}{\mathbb{Q}}     % For rational numbers
\renewcommand{\Z}{\mathbb{Z}}     % For integers
\newcommand{\G}{\mathbb{G}}     % For group schemes
\newcommand{\ve}{^{\vee}} %Cartier dual
\newcommand{\im}{\operatorname{Im}}        % For the image of a morphism
\renewcommand{\ker}{\operatorname{Ker}}  % For the kernel of a morphism
\newcommand{\coker}{\operatorname{Coker}} % For the cokernel of a morphism
\newcommand{\gr}{\operatorname{gr}}        % For associated Graded
\newcommand{\Pic}{\operatorname{Pic}}     % For the Picard variety
\newcommand{\Alb}{\operatorname{Alb}}     % For the Albanese variety
\newcommand{\rank}{\operatorname{rank}}    % For rank
\newcommand{\eprooff}{\hfill$\Box$\par}
\renewcommand{\qed}{\eprooff}
\renewcommand{\tilde}{\widetilde}
\newcommand{\ie}{{\it i.e., }}
\newcommand{\Lie}{{\operatorname{Lie}}} % Lie algebra
\newcommand{\coLie}{{\operatorname{coLie}}} % co-Lie algebra
\newcommand{\Fil}{{\operatorname{Fil}}} %filtration
\newcommand{\Ex}{{\operatorname{E}}}
\newcommand{\nat}{^{\natural}}
\renewcommand{\bar}{\overline}
\newcommand{\barQ}{\overline{\Q}} 
\newcommand{\into}{\hookrightarrow}
\renewcommand{\lim}{\varprojlim}
\newcommand{\colim}{\varinjlim}
\newcommand{\onto}{\mbox{$\to\!\!\!\!\to$}}
\newcommand{\cA}{\mathcal{A}}
\newcommand{\cC}{\mathcal{C}}
\newcommand{\cH}{\mathcal{H}}
\newcommand{\cK}{\mathcal{K}}
\newcommand{\cO}{\mathcal{O}}
\newcommand{\cP}{\mathcal{P}}
\newcommand{\sG}{\mathscr{G}}
\newcommand{\sI}{\mathscr{I}}
\newcommand{\fg}{\mathfrak{g}}
\newcommand{\fh}{\mathfrak{h}}
\newcommand{\fm}{\mathfrak{m}}
\newcommand{\fb}{\mathfrak{b}}
\newcommand{\fa}{\mathfrak{a}}
\renewcommand{\phi}{\varphi}
\renewcommand{\epsilon}{\varepsilon}
\newcommand{\boxtensor}{\def\boxtimesten{\Box\kern-7.59pt\raise1.2pt
\hbox{$\times$} }}                                  %For boxtensor
\newcounter{elno}                   % This to number lists
\begin{document}

\title{p-adic period conjectures for  1-motives:\\[2ex]
     Integration and Linear Relations}
\author{Mohammadreza Mohajer} \author{Abdellah Sebbar}
\address{Department of Mathematics and Statistics, University of Ottawa,
		Ottawa Ontario K1N 6N5 Canada}
	\email{mmohajer@uottawa.ca}
	\email{asebbar@uottawa.ca}
\keywords{p-adic periods; 1-motives; \(p\)-adic integration; crystalline cohomology;
         period conjectures; p-adic analytic subgroup theorem}

\subjclass[2020]{14F30  11S80, 14C15, 14C30 }
% ----------------------------------------------------------------

%\date{\today}

\begin{abstract}
We develop a $p$-adic theory of periods for 1-motives, extending the classical theory of complex periods into the non-archimedean setting. For 1-motives with good reduction over $p$-adic local fields, we construct a $p$-adic integration pairing that generalizes the Colmez--Fontaine--Messing theory for abelian varieties. This pairing is bilinear, perfect, Galois-equivariant, and compatible with the Hodge filtration, taking values in a quotient of the de Rham period ring.

Building on this construction, we introduce a stratified formalism for $p$-adic periods, defining period spaces at various depths that capture increasingly refined relations among periods, and formulating conjectures that mirror the Grothendieck period conjecture in this new context. The classical period conjecture for 1-motives over $\barQ$, previously resolved via the Huber--Wüstholz analytic subgroup theorem, is recovered at depth 1 in our framework.
For 1-motives over number fields with good reduction at $p$, we identify canonical $\mathbb{Q}$-structures on the $p$-adic realizations arising from rational points of their associated formal $p$-divisible groups. Relative to these structures, we establish the conjectures at depths 1 and 2.

A key tool is the development of a $p$-adic analytic subgroup theorem tailored to 1-motives, providing an analogue of Wüstholz’s classical result. Our work not only yields a $p$-adic counterpart to the Kontsevich--Zagier conjecture for 1-motives but also opens new pathways for the study of linear relations among $p$-adic periods and their transcendence properties.

\end{abstract}
\maketitle

%%%%%%%%%%%%%%%%%%%%%%%%%%%%%%%%%%%%%%%%%%%%%%%%%%%%
%%
%% Section 1. Introduction
%%
%%%%%%%%%%%%%%%%%%%%%%%%%%%%%%%%%%%%%%%%%%%%%%%%%%%

\section{Introduction}

The theory of periods lies at the confluence of algebraic geometry, transcendental number theory, and arithmetic geometry. Classically, periods are defined as complex numbers obtained by integrating algebraic differential forms over topological cycles on algebraic varieties. These numbers encode profound arithmetic and geometric information and have played a central role in major mathematical developments, such as the proof of the transcendence of $\pi$ and $e$, and the Lindemann--Weierstrass theorem. More broadly, Grothendieck's period conjecture posits that all algebraic relations among periods are governed by the structure of the underlying motive and its motivic Galois group, offering a tantalizing glimpse into the deep symmetries of arithmetic geometry. This idea was first introduced in \cite{grothendieck1966rham} and was subsequently formalized in full detail in \cite{And04} and \cite{And17}.

Despite significant advances, our understanding of periods remains incomplete. While transcendence and linear independence results are known for certain classes of periods, such as those associated to abelian varieties and some multiple zeta values, the general conjectures remain widely open. In this context, \emph{1-motives}, introduced by Deligne, provide a tractable yet rich class of mixed motives. They encompass extensions of tori and abelian varieties and serve as a natural bridge between pure and mixed motives. Remarkably, recent work by Huber and Wüstholz (\cite{huber2022transcendence}) has shown that all linear relations among the classical periods\footnote{We use the term "classical" to distinguish these periods from p-adic periods, which are the main focus of this paper.} of 1-motives over $\barQ$ are induced solely by bilinearity and functoriality, thereby verifying the Kontsevich--Zagier period conjecture for this class.

Parallel to these developments in the complex setting, $p$-adic Hodge theory has emerged as a powerful framework for understanding $p$-adic analogues of cohomological invariants. The comparison isomorphisms established by Fontaine, Faltings, and others reveal intricate connections between the étale, de Rham, and crystalline cohomologies of algebraic varieties over $p$-adic fields. These theories suggest the existence of a $p$-adic period theory mirroring the classical one. However, despite the foundational work of Fontaine, Colmez, and later developments by Faltings, Andr\'e, Furusho, and Ayoub, a direct $p$-adic period theory capable of addressing questions of linear relations and transcendence has not been fully realized.

Existing approaches to $p$-adic periods have primarily focused on constructing period matrices and comparison isomorphisms in cohomology. While these constructions provide a rich structural framework, they do not directly tackle questions concerning the linear relations among $p$-adic periods or their transcendence properties. Moreover, they are often restricted to pure motives or varieties with specific cohomological constraints, leaving mixed motives, and in particular 1-motives, less explored in the $p$-adic setting.

The purpose of this work is to develop a comprehensive $p$-adic theory of periods for 1-motives with good reduction. We extend the classical framework by constructing a p-adic integration pair-
ing for 1-motives, generalizing the celebrated theory of Colmez and Fontaine for abelian varieties.
For a 1-motive $M = [L \to G]$ over a $p$-adic local field $K$ with good reduction, we construct a bilinear, perfect, Galois-equivariant pairing
    \[
    \int^{\varpi}: T_p(M) \times T_\mathrm{dR}^\vee(M) \to B_2,
    \]
    which is compatible with Hodge filtration, where $B_2 = \mathbf{B}_\mathrm{dR}^{+}/t^2 \mathbf{B}_\mathrm{dR}^{+}$ is a natural quotient of de Rham period ring. This generalizes the $p$-adic integration theory of Colmez and Fontaine beyond abelian varieties (\cref{theorem p-adic integration pairing for M}).

Building on this construction, we introduce a stratified formalism for $p$-adic periods, inspired by the structure of period algebras in the classical setting. For each 1-motive, we define period spaces stratified by depth, where each level captures increasingly refined linear relations among periods. In particular, the validity of this $p$-adic period conjecture implies that all relations among the constructed $p$-adic periods are accounted for by bilinearity and functoriality. Moreover, increasing the depth reveals additional relations that go beyond those induced by bilinearity and functoriality. This extends the Kontsevich--Zagier philosophy to the $p$-adic realm.

A central innovation in our approach is the construction of canonical $\mathbb{Q}$-structures on the $p$-adic realizations of 1-motives with good reduction. These structures, denoted
\[
h_p(M), \quad H_p^\varphi(M), \quad \text{and} \quad H_p^\varpi(M),
\]
are derived from the $\mathbb{Q}$-rational points of the formal $p$-divisible group associated with $M$. They provide a natural framework for defining and analyzing the $p$-adic periods relative to arithmetic data. More precisely, we construct certain \(\Q\)-structures \(F(M) \subset \Tp(M) \otimes \Bt\) and linear subspaces \(G(M) \subset \TdRv(M) \otimes \overline{\Q}\), relative to which we precisely characterize all relations among the \(p\)-adic periods of \(M\) relative to $(F,G)$. Moreover, this allows us to determine exactly when there are relations beyond those induced by bilinearity and functoriality. Here, a $p$-adic period of $M$ relative to $(F,G)$ is a p-adic number which is in the image of 
    \[
    \left.\int^{\varpi}\right|_{F(M)\times G(M)}:F(M)\times G(M)\to\Bt.
    \]

The key technical tool underpinning our results is a $p$-adic analogue of the analytic subgroup theorem, specifically adapted to 1-motives (\cref{p-adic subgroup theorem for Fontaine pairing}). In the complex setting, Wüstholz's analytic subgroup theorem (\cite{wustholz1989algebraische}) has been instrumental in establishing transcendence and linear independence results for periods. Our $p$-adic version plays a similarly crucial role, enabling us to control the vanishing of $p$-adic periods and to prove the linear independence results predicted by our conjectures at various depths.

Roughly, we prove that for 1-motives over number fields with good reduction at a prime $p$, the $p$-adic period conjectures hold at depths 1 and 2 relative to the constructed canonical $\mathbb{Q}$-structures (\cref{level 2 period cinjecture for Hpp} and \cref{depth 2 and 1 period cinjecture for Hp and hp}). This provides a $p$-adic counterpart to the classical results for 1-motives, highlights the $\barQ$-linear independence of our p-adic periods (see, for instance, \cref{prop 4.4.1}) and represents a significant step toward a broader $p$-adic transcendence theory.

\medskip

Our results build upon and extend several important developments in $p$-adic Hodge theory and transcendence. They generalize the integration framework of Colmez and Fontaine for abelian varieties, adapt Wüstholz’s analytic subgroup theorem to the $p$-adic setting, and connect with recent work on period algebras and $p$-adic period conjectures for mixed motives. Unlike earlier approaches, which are largely structural and cohomological, ours is explicitly designed to detect all relations among the constructed $p$-adic periods, producing concrete period spaces and canonical $\mathbb{Q}$-structures, and and providing a framework for addressing the $p$-adic period conjectures. By employing $p$-adic integration and formal group techniques, we avoid conjectural identifications with crystalline cohomology classes and obtain unconditional results for 1-motives.

Significant progress toward $p$-adic period theory has been made in \cite{ancona2022algebraic}, \cite{Ayoub2020NouvellesCD}, \cite{brown_notes_2017}, and \cite{furusho_p_2004}. For a smooth projective variety $X$ over $\bar{\Q}$ with good reduction at $p$, Berthelot’s comparison theorem \cite[Theorem V.2.3.2]{Berthelot} gives a canonical isomorphism:
\begin{equation}\label{2}
    \operatorname{H}^n_{\mathrm{dR}}(X/\bar{\Q}) \otimes \bar{\Q}_p \cong \operatorname{H}^n_{\mathrm{crys}}(\bar{X}, \bar{\Q}_p).
\end{equation}
Using this, Ancona et al.\ define \emph{André’s $p$-adic periods} as the matrix entries of $cl_p(X) \hookrightarrow \operatorname{H}_{\mathrm{dR}}(X) \otimes \bar{\Q}_p$ with respect to $\bar{\Q}$-bases, where $cl_p(X)$ denotes algebraic classes modulo $p$ in crystalline cohomology. Whether these periods arise from Coleman--Berkovich--Vologodsky $p$-adic integration \cite{besser2000generalization, berkovich2007integration, vologodsky2001hodge} remains unresolved.

Their motivic framework yields an upper bound on the transcendence degree of the period algebra $\cP_p(\langle M \rangle)$ for a motive $M$ with good reduction:
\[
\dim \cP_p(\langle M \rangle) \leq \dim G_{\mathrm{dR}}(M) - \dim G_{\mathrm{crys}}(M),
\]
where $G_{\mathrm{dR}}(M)$ and $G_{\mathrm{crys}}(M)$ are the Tannakian Galois groups in the de Rham and crystalline settings. The $p$-adic Grothendieck period conjecture asserts equality, but remains open beyond the case of Kummer motives $M(\alpha)$ with $\alpha \in \Q$ \cite{ancona2022algebraic, Yamashita2010}.

While foundational, this approach offers no insight into linear relations or vanishing behaviour among $p$-adic periods, does not extend to curves (where algebraic classes in crystalline cohomology are inaccessible), and excludes 1-motives. It also depends on the standard conjecture $\sim_{\mathrm{num}} = \sim_{\mathrm{hom}}$. In contrast, our construction applies to 1-motives, yields all $p$-adic period relations explicitly, classifies all vanishing periods, and our framework avoids conjectural assumptions.

For mixed Tate motives, analogous constructions appear in \cite{furusho_p_2004, brown_notes_2017}, where Frobenius actions on crystalline cohomology yield $p$-adic periods via \eqref{2}. In this case, André’s periods come indeed from these Frobenius coefficients \cite{ancona2022algebraic}.

The abstract period algebra developed in \cite{Ayoub2020NouvellesCD} relates closely to André’s and fits within our formalism. Still, the relation between their periods and our $\hp$-, $\Hp$-, and $\Hpp$-periods remains unclear. Ours arise from explicit $p$-adic integration (Section~3), whereas it is unknown whether André’s do. Even for abelian varieties $A$, it is not known whether our $\mathbb{Q}$-structures $\hp(A)$, $\Hp(A)$, or $\Hpp(A)$ correspond to algebraic classes in $\operatorname{H}^2_{\mathrm{crys}}(\bar{A})$. We leave a systematic comparison to future work.

 The paper is organized as follows:

\begin{itemize}
    \item In Section 2, we review the theory of 1-motives, their cohomological realizations, and the relevant background in $p$-adic Hodge theory.
    \item In Section 3, we develop the $p$-adic integration theory for 1-motives with good reduction, establishing the existence and properties of the integration pairing.
    \item Section 4 introduces the stratified formalism of $p$-adic periods, defines canonical $\mathbb{Q}$-structures, and formulates the period conjectures
    \item In Section 5, we establish a $p$-adic version of the analytic subgroup theorem for $1$-motives and apply it to prove the main results concerning the $p$-adic period conjectures. These results involve periods rising from the pairings induced by $p$-adic integration on 1-motives and are formulated relative to the constructed $\Q$-structures.
    \item Section 6 provides explicit examples illustrating the theory, including 1-motives associated with smooth projective varieties and their $p$-adic periods.
\end{itemize}

We conclude with a discussion of open problems and potential extensions, including the study of $p$-adic period domains and the extension of our methods to more general mixed motives.

\subsection*{Conventions}
\begin{itemize}
    \item We fix a prime number $p$.
    \item Unless otherwise specified, $K$ is typically a non-archimedean local field of mixed characteristic $(0,p)$ with perfect residue field $k$ and valuation ring $\cO_K$. The algebraic closure of $K$ is denoted by $\bar{K}$, and $\C_p$ is the completion of $\bar{K}$. Let us denote by $\Gamma_K$ the absolute Galois group of $K$.
    \item We denote the maximal unramified extension of $K$ by $K^{ur}$, with its residue field denoted by $\bar{\F_p}$.
    \item $\K$ denotes a number field, and $\bar{\K}=\barQ$ is its algebraic closure. We denote by $\Gamma_{\K}$ the absolute Galois group of $\K$.
    \item The scheme $S$ is always a locally Noetherian, integral, regular scheme.
    \item By $S$-group schemes, we always mean the commutative ones.
    %\item If $\cA$ is one of the category of abelian varieties, connected commutative groups, or 1-motives over $K$, we always consider it as the isogeny category $\cA\otimes\Q$, which has the same objects as $\cA$, but morphisms are tensored with $\Q$. For the categories that we are interested in, the isogeny category is always an abelian category.
    \item For any abelian category $\cC$ and any object $X\in\cC$, we denote by $\langle X\rangle$ the full abelian subcategory generated by $X$.
\end{itemize}

\tableofcontents

%%%%%%%%%%%%%%%%%%%%%%%%%%%%%%%%%%%%%%%
%%
%%Section 2. 1. motives and their realizations
%%
%%%%%%%%%%%%%%%%%%%%%%%%%%%%%%%%%%%%%%

\section{1-motives and their realizations}

A \emph{1-motive} $M$ over a base scheme $S$ is a two-term complex of commutative $S$-group schemes
\[
M = [L \xrightarrow{u} G],
\]
where:
\begin{itemize}
    \item $L$ is a lattice over $S$, that is, an $S$-group scheme locally (for the étale topology) isomorphic to a constant finitely generated free $\mathbb{Z}$-module;
    \item $G$ is a semi-abelian $S$-scheme, i.e., an extension of an abelian $S$-scheme $A$ by a torus $T$ over $S$;
    \item $u$ is a morphism of $S$-group schemes.
\end{itemize}
Morphisms of 1-motives are morphisms of complexes, and we place $L$ in degree $-1$ and $G$ in degree $0$. The category of 1-motives over $S$ is denoted by $\Mi(S)$.

An important structural feature of a 1-motive is the canonical exact sequence:
\begin{equation}\label{canonical exact sequence for any 1-motive}
    0 \to [0 \to G] \to M \to [L \to 0] \to 0.
\end{equation}
This sequence expresses $M$ as an extension of a pure lattice by a semi-abelian scheme.
\begin{example}
Let $S = \Spec K$ for a field $K$. A 1-motive $M = [L \xrightarrow{u} G]$ consists of a finitely generated free $\mathbb{Z}$-module $L$ equipped with a continuous $\Gamma_K$-action, and a $\Gamma_K$-equivariant  homomorphism  $u: L \to G(\bar{K})$.
\end{example}

To work within an abelian framework, we pass to the isogeny category  $\mathcal{M}_1(S) \otimes \mathbb{Q}$ which we continue to denote by  $\mathcal{M}_1(S)$. From now on, we consider 1-motives up to isogeny.

\subsection{Weight Filtration}

1-motives naturally carry a canonical weight filtration, reflecting their
mixed nature. For $M = [L \to G]$, define:
\[
\W_i(M) =
\begin{cases}
    0 & i < -2, \\
    [0 \to T] & i = -2, \\
    [0 \to G] & i = -1, \\
    M & i \geq 0,
\end{cases}
\]
where $T$ is the toric part of $G$. The associated graded pieces are:
\[
\gr^W_i(M) =
\begin{cases}
    0 & i \leq -3 \text{ or } i \geq 1, \\
    [0 \to T] & i = -2, \\
    [0 \to A] & i = -1, \\
    [L \to 0] & i = 0,
\end{cases}
\]
with $A$ the abelian quotient of $G$. From now on, we view a torus \(T\), a lattice \(L\), an abelian scheme \(A\), and a semi-abelian scheme \(G\) as 1-motives, identified with the complexes \([0 \to T]\), \([L \to 0]\), \([0 \to A]\), and \([0 \to G]\), respectively. This filtration plays a central role in describing the realizations of 1-motives.
 
\subsection{Cartier Duality}

The duality theory for 1-motives mirrors that of abelian schemes, but with
added complexity due to the presence of tori and lattices. Consider the complex $M_A = [L \xrightarrow{v} A]$, where $v$ is the composition of $u$ with the natural projection $G \to A$. This fits into the commutative diagram
\[
\begin{tikzcd}
& & L \arrow[d, "u"] \arrow[rd, "v"] & \\
0 \arrow[r] & T \arrow[r] & G \arrow[r] & A \arrow[r] & 0.
\end{tikzcd}
\]
The Cartier dual $M^\vee$ of $M$ is the 1-motive 
\[M^\vee = [T^\vee \xrightarrow{u^\vee} G^\vee],\]
where:
\begin{itemize}
    \item $T^\vee=\underline{\operatorname{Hom}}_S(T, \mathbf{G}_m)$ is the character lattice of $T$;
    \item $L^\vee=\underline{\operatorname{Hom}}_S(L, \mathbf{G}_m)$ is the torus dual of $L$;
    \item $A^\vee$ is the dual abelian scheme of $A$, representing the sheaf $\operatorname{Ext}^1_S(A, \mathbf{G}_m)$ by the Weil-Barsotti formula;
    \item $G^\vee$ is the extension of $A^\vee$ by $L^\vee$, representing the sheaf $\operatorname{Ext}^1_S(M_A, \mathbf{G}_m)$.
\end{itemize}
This duality fits into the diagram:
\[
\begin{tikzcd}
& & T^\vee \arrow[d, "u^\vee"] \arrow[rd, "v^\vee"] & \\
0 \arrow[r] & L^\vee \arrow[r] & G^\vee \arrow[r] & A^\vee \arrow[r] & 0.
\end{tikzcd}
\]
Cartier duality defines an exact contravariant functor $(\cdot)^\vee : \mathcal{M}_1(S) \to \mathcal{M}_1(S)$, satisfying a canonical isomorphism
\[(M^\vee)^\vee \cong M.\]
\subsection{Realization Functors}

We now turn to the homological realizations of 1-motives, beginning with the de Rham realization via universal vector extensions.

A \emph{vector extension} of  $M = [L \to G]$ over $S$ is an extension of $M$ by a vector group $V$, i.e., an exact sequence of complexes
\[
0 \longrightarrow [0 \to V] \longrightarrow [L \to G'] \longrightarrow [L \to G] \longrightarrow 0.
\]
We write  $[0 \to V]$ simply as $V$.

Among all such extensions, the \emph{universal vector extension} $M^\natural$ of $M$ is characterized by the universal property:  for any vector extension as above,
there exists a unique homomorphism of $S$-vector groups $\varphi: V(M) \to V$ such that the extension $[L \to G']$ is the push-out of $M^\natural$ by $\varphi$.

The universal vector extension exists and fits into the exact sequence
\[
0 \longrightarrow V(M) \longrightarrow M^\natural \longrightarrow M \longrightarrow 0,
\]
where
$
V(M) = \underline{\operatorname{Hom}}_S\left( \operatorname{Ext}^1_S(M, \mathbf{G}_a), \mathcal{O}_S \right),
$ and $M^\natural = [L \xrightarrow{u^\natural} G^\natural]$\footnote{Over a field of characteristic zero, see \cite[10.1.7]{deligne1974theorie}. For general case over $S$, see \cite[Prop. 2.2.1]{barbieri2009sharp}, and \cite[\S 2.3, 2.4]{andreatta2005crystalline}}. Explicitly, we have an extension of $S$-group schemes
\[
0 \longrightarrow V(M) \longrightarrow G^\natural \longrightarrow G \longrightarrow 0.
\]
such that $G^\natural$ is the push-out of the universal vector extension
\[
0 \longrightarrow V(G) \longrightarrow E(G) \longrightarrow G \longrightarrow 0
\]
of the semi-abelian scheme $G$ along the natural inclusion $ V(G)=\ihom_{\cO_S}(\iext^1_S(G,\G_{a,S}),\cO_S)\hookrightarrow\ihom_{\cO_S}(\iext^1_S(M,\G_{a,S}),\cO_S)=V(M)$, yielding a non-canonical isomorphism
\[
G^\natural \simeq E(G) \times_S (L \otimes_{\mathbb{Z}} \mathbf{G}_a).
\]

The \emph{de Rham realization} of a 1-motive $M$ over $S$ is defined by:
\[
\TdR(M) := \underline{\Lie}_{G^\natural}(S) = \Lie(G^\natural),
\]
which is naturally endowed with a weight and a Hodge filtration:
\[
\Fil^i \TdR(M) =
\begin{cases}
    V(M) = \ker(\Lie(G^\natural) \to \Lie(G)) & i = 0, \\
    \TdR(M) & i = -1, \\
    0 & \text{otherwise}.
\end{cases}
\]
Thus, we obtain the canonical exact sequence:
\[
0 \to V(M) \to \TdR(M) \to \Lie(G) \to 0.
\]
By \cite[Prop.~2.3]{bertapelle_delignes_2009}, there is a canonical isomorphism:
\[
\underline{\Ext}^1_S(M, \mathbb{G}_{a,S}) \cong \underline{\Ext}^1_S(M_A, \mathbb{G}_{a,S}) \cong \underline{\Lie}(G^\vee),
\]
and hence:
\begin{equation}\label{identification V(M) and coLie}
V(M) \cong \underline{\Hom}_{\mathcal{O}_S}(\Lie(G^\vee), \mathcal{O}_S) = \underline{\coLie}(G^\vee).    
\end{equation}

In particular, there is a commutative diagram:
\[
\begin{tikzcd}
0 \arrow[r] & V(G) \arrow[r] \arrow[d, Rightarrow, no head] & V(M) \arrow[r] \arrow[d, Rightarrow, no head] & V(L) \arrow[r] \arrow[d, Rightarrow, no head] & 0 \\
0 \arrow[r] & \coLie(A^\vee) \arrow[r] & \coLie(G^\vee) \arrow[r] & \coLie(L^\vee) \arrow[r] & 0.
\end{tikzcd}
\]

\begin{prop}[{\cite[Lemma~2.3.2]{barbieri2009sharp}}]
The functor $M \mapsto M^\natural$ is exact.
\end{prop}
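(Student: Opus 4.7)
The plan is to deduce exactness of $M \mapsto M^\natural$ from exactness of the functor $V(-)$ together with the defining short exact sequence $0 \to V(M) \to M^\natural \to M \to 0$, via the snake lemma. Given an exact sequence $0 \to M' \to M \to M'' \to 0$ in $\mathcal{M}_1(S)$, functoriality of the universal vector extension yields a commutative diagram
\[
\begin{tikzcd}
0 \arrow[r] & V(M') \arrow[r] \arrow[d] & V(M) \arrow[r] \arrow[d] & V(M'') \arrow[r] \arrow[d] & 0 \\
0 \arrow[r] & (M')^\natural \arrow[r] \arrow[d] & M^\natural \arrow[r] \arrow[d] & (M'')^\natural \arrow[r] \arrow[d] & 0 \\
0 \arrow[r] & M' \arrow[r] & M \arrow[r] & M'' \arrow[r] & 0
\end{tikzcd}
\]
with the bottom row exact by hypothesis and the columns exact by construction. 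Once the top row is shown exact, the snake lemma forces the middle row to be exact as well, yielding the claim.

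It therefore suffices to establish the exactness of the functor $V$. The plan is to use the identification $V(M) \cong \coLie(G^\vee)$ coming from \eqref{identification V(M) and coLie}, so that $V$ factors as
\[
M \;\longmapsto\; M^\vee \;\longmapsto\; G^\vee \;\longmapsto\; \Lie(G^\vee) \;\longmapsto\; \underline{\Hom}_{\mathcal{O}_S}(\Lie(G^\vee), \mathcal{O}_S).
\]
I would check exactness of each step separately: Cartier duality $(-)^\vee$ on $\mathcal{M}_1(S)$ is exact (and contravariant) as recalled earlier; the semi-abelian part functor $[L' \to G'] \mapsto G'$ is exact in the isogeny category because a short exact sequence of 1-motives is strict for the weight filtration, so it induces exact sequences on $W_{-1}$; the functor $\Lie$ is exact on semi-abelian $S$-schemes, since $\mathrm{Lie}$ of an extension of semi-abelian schemes yields a short exact sequence of locally free $\mathcal{O}_S$-modules; and finally $\mathcal{O}_S$-linear duality $\underline{\Hom}_{\mathcal{O}_S}(-, \mathcal{O}_S)$ is exact on the category of locally free $\mathcal{O}_S$-modules of finite rank, which applies here because $\Lie(G^\vee)$ is locally free.

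The step I expect to require the most care is the exactness of the semi-abelian part functor on $\mathcal{M}_1(S)$: one needs the fact that in the isogeny category a morphism of 1-motives respects the weight filtration strictly, so that passing from the given short exact sequence to the sequence of semi-abelian parts indeed remains short exact. Once this strictness is in hand, combined with the exactness of Cartier duality, $\Lie$, and $\mathcal{O}_S$-duality on locally free sheaves, the functor $V$ is exact; plugging this into the snake lemma diagram above concludes the proof.
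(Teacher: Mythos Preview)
The paper does not give its own proof of this proposition; it simply records the statement and cites \cite[Lemma~2.3.2]{barbieri2009sharp}. So there is nothing in the paper to compare your argument against, and your proposal is in effect supplying a proof where the paper defers to the literature.

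Your strategy is correct. Two small comments. First, what you invoke is more precisely the $3\times 3$ (nine) lemma rather than the snake lemma per se: with exact columns and exact top and bottom rows, the middle row is exact provided it is already a complex, and that follows from additivity and functoriality of $M\mapsto M^\natural$ (the composite $M'\to M\to M''$ is zero, hence so is the induced composite on universal vector extensions). Second, the step you flag---strict exactness of the semi-abelian-part functor---is indeed the only place requiring a word of justification, and it is straightforward here: a morphism of 1-motives $[L\to G]\to[L'\to G']$ is by definition a compatible pair $(L\to L',\,G\to G')$, so passing to $W_{-1}$ is just projection onto the $G$-component, and a short exact sequence of 1-motives (in the isogeny category) yields a short exact sequence on the semi-abelian parts. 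The remaining ingredients---exactness of Cartier duality (stated in the paper), exactness of $\Lie$ on smooth group schemes, and exactness of $\underline{\Hom}_{\mathcal O_S}(-,\mathcal O_S)$ on locally free modules---are standard, so your chain establishing exactness of $V$ via the identification $V(M)\cong\coLie(G^\vee)$ from \eqref{identification V(M) and coLie} goes through.
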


\subsection{$\ell$-adic and crystalline realizations}
The study of torsion points and their limits leads
to the $\ell$-adic and crystalline realizations of 1-motives.

Let $n$ be a positive integer. Consider the multiplication-by-$n$ map on $M$, denoted $n\colon M \to M$, acting by multiplication on both $L$ and $G$. It yields a commutative diagram:
\[
\xymatrix{
L \ar[r]^{u} \ar[d]^{n} & G \ar[d]^{n} \\
L \ar[r]^{u} & G
}
\]
which induces a morphism of $S$-group schemes
\[
L \to L \times_G G, \quad x \mapsto (nx, -u(x)).
\]
Define:
\[
M[n] := \coker\left(L \to L \times_G G\right).
\]
Equivalently, $M[n]$ is the cohomology sheaf $H^{-1}(M/n)$, where $M/n$ denotes the cone of multiplication by $n$ on $M$. The  exact sequence \eqref{canonical exact sequence for any 1-motive} gives rise to a short exact sequence:
\begin{equation}\label{exact sequence M[n]}
0 \to G[n] \to M[n] \to L[n] \to 0.
\end{equation}
Explicitly,
\begin{equation}\label{formula M[n]}
M[n] = \frac{\{(x, g) \in L \times G \mid u(x) = -ng\}}{\{(nx, -u(x)) \mid x \in L\}},
\end{equation}
a quotient in the fppf topology. In particular, if $S = \Spec K$, the group scheme $M[n]$ corresponds to a $(\Z/n\Z)[\Gamma_K]$-module.

Assume that $G$ is an extension of an abelian scheme $A$ by a torus $T$. It is well known that multiplication by $n$ in $A$ and $T$ is finite and faithfully flat, and that $T[n]$ and $A[n]$ are finite flat group schemes over $S$, which are, moreover, 'etale if $n$ is invertible on $S$. In this case, \eqref{exact sequence M[n]} implies that $M[n]$ is finite flat, and is \'etale whenever $S$ is defined over $\Z[1/n]$. 

Define the $p$-divisible group associated to $M$ by:
\[
M[p^{\infty}] := \varinjlim_n M[p^n],
\]
where the transition maps $M[p^m] \to M[p^n]$ for $m \geq n$ are given by $(x, g) \mapsto (p^{m-n}x, g)$. 

On the other hand, for a prime number $\ell$, The $\ell$-adic realization of a 1-motive $M$  is given by the Tate module 
\[T_\ell(M):=\lim_{m}M[\ell^m]\]
and the corresponding $\mathbb{Q}_\ell$-vector space:
\[
V_\ell(M) = T_\ell(M) \otimes_{\mathbb{Z}_\ell} \mathbb{Q}_\ell,
\]
where the inverse limit is taken over maps $M[\ell^m](\bar{K})\to M[\ell^n](\bar{K})$, for $m\geq n$, induced by $(x,g)\mapsto (x,\ell^{m-n}g)$.

If $M$ is a 1-motive over a $p$-adic local field $K$, we say that $M$ has \emph{good reduction} if it extends to $\cO_K$. Denote its reduction modulo $p$ by $\bar{M}$, and let  $\gMi(K)$ be the isogeny category of 1-motives with good reduction over $K$.

If \( K \) is a number field. We say that a 1-motive \( M \) has good reduction at a prime \( \mathfrak{p} \subset \mathcal{O}_K \) lying above \( p \) if the base change \( M \times_{\Spec K} \Spec K_{\mathfrak{p}} \) has good reduction, where \( K_{\mathfrak{p}} \) denotes the \( p \)-adic completion of \( K \) at \( \mathfrak{p} \).

\begin{remark}
As it is shown in \cite[Chapter IV]{MatevGoodReduction}, for a 1-motive $M$ over $K$, where $K$ is either a number field or a
p-adic local field, $M$ has a good reduction at $p$ if and only if $\Tl(M)$ is unramified at a prime $\ell$ which is different from $p$. As extensions of unramified representations are also unramified, we can conclude that the isogeny category $\gMi(K)$ of 1-motives over $K$ is abelian. 
\end{remark}

\begin{prop}\label{Hodge-Tate weights of tate module of 1-motives}
Let $M\in\gMi(K)$. Then the Tate module $\Vp(M)$ is a Hodge–Tate representation of $\Gamma_K$, with Hodge–Tate weights $0$ and $1$ of multiplicities $\rank(L) + \dim(A)$ and $\dim(T) + \dim(A)$, respectively.
\end{prop}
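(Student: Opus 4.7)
The plan is to combine the weight filtration on $M$ with Tate's Hodge--Tate decomposition for $p$-divisible groups over $\cO_K$.

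First I would analyze the Hodge--Tate structure of each graded piece of the weight filtration. Since $M$ extends to $\cO_K$, the same is true of its weight-graded components: the lattice $L$ becomes an étale $\cO_K$-group scheme, so that $\Vp(L)\cong L\otimes\Q_p$ is an unramified $\Gamma_K$-representation, Hodge--Tate of pure weight $0$ with multiplicity $\rank L$; the torus $T$ extends to a torus over $\cO_K$, hence $\Vp(T)\cong\Hom_{\Z}(T^{\vee},\Z)\otimes\Q_p(1)$ is Hodge--Tate of pure weight $1$ with multiplicity $\dim T$; and $A$ is an abelian scheme of good reduction, so by Tate's classical theorem $\Vp(A)$ is Hodge--Tate with weights $0$ and $1$ of multiplicity $\dim A$ each.

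Next, the weight filtration \eqref{canonical exact sequence for any 1-motive} and the semi-abelian extension $0\to T\to G\to A\to 0$ yield short exact sequences of $\Gamma_K$-representations
\[
0\to \Vp(T)\to \Vp(G)\to \Vp(A)\to 0,\qquad 0\to \Vp(G)\to \Vp(M)\to \Vp(L)\to 0.
\]
Because the Hodge--Tate weights of consecutive terms overlap, being Hodge--Tate is not a priori preserved under these extensions, so the sequences alone do not suffice. To overcome this, I would promote $M[p^\infty]$ to a Barsotti--Tate group over $\cO_K$ using the good-reduction hypothesis: by \eqref{exact sequence M[n]} combined with the analogous properties for $T$, $A$, and $L$, each $M[p^n]$ is finite flat over $\cO_K$ of the expected constant rank, and the transition maps assemble $\{M[p^n]\}_n$ into a $p$-divisible group of dimension $d=\dim G=\dim T+\dim A$ and height $h=\dim T+2\dim A+\rank L$.

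Applying Tate's Hodge--Tate decomposition for $p$-divisible groups over $\cO_K$ to $M[p^\infty]$ then yields that $\Vp(M)=\Vp(M[p^\infty])$ is Hodge--Tate, with weight-$1$ multiplicity $d=\dim T+\dim A$ and weight-$0$ multiplicity $h-d=\dim A+\rank L$, matching the claim and consistent with the additive weight counts read off from the exact sequences above. The main obstacle is the construction of $M[p^\infty]$ as a genuine $p$-divisible group over $\cO_K$: this is where the good-reduction hypothesis is essential, since it is needed to ensure flatness of the $M[p^n]$ and constancy of their ranks in $n$; once this is verified, Tate's theorem applies directly and concludes the argument.
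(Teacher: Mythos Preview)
Your proposal is correct and follows essentially the same approach as the paper: both recognize that good reduction makes $M[p^\infty]$ a $p$-divisible group over $\cO_K$ and then invoke Tate's Hodge--Tate decomposition. The only cosmetic difference is that the paper computes the weight-$0$ multiplicity by explicitly determining $\dim (M[p^\infty])^\vee$ via Cartier-dualizing the exact sequences (finding $\dim(L[p^\infty])^\vee=\rank L$ and $\dim(G[p^\infty])^\vee=\dim A$), whereas you obtain the same number as $h-d$ from the height; these are equivalent bookkeeping.
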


\begin{proof}
 Let $M$ be a 1-motive over $K$ with good reduction extending to $\mathcal{O}_K$. Consider the exact sequence of $p$-divisible groups over $\cO_K$:
 
\begin{equation}\label{exact sequence M[p^infty]}
0 \to G[p^{\infty}] \to M[p^{\infty}] \to L[p^{\infty}] \to 0.
\end{equation}

Since $L[p^{\infty}]$ is \'etale, the connected components of $M[p^{\infty}]$ and $G[p^{\infty}]$ coincide, and hence
\[
\Lie(M[p^{\infty}]) \cong \Lie(G[p^{\infty}]),\, \dim(\Lie(M[p^{\infty}]))=\dim(T)+\dim(A).
\]
Taking Cartier duals from sequence \ref{exact sequence M[p^infty]}, which preserves exactness for $p$-divisible groups, we obtain:
\[
0 \to (L[p^{\infty}])^{\vee} \to (M[p^{\infty}])^{\vee} \to (G[p^{\infty}])^{\vee} \to 0.
\]
As $L[p^{\infty}] \cong L \otimes \Q_p/\Z_p$, and since $\underline{\Hom}(L \otimes \mathbb{Q}_p/\mathbb{Z}_p, \mathbb{G}_m) \cong \mup^{\rank(L)}$, the dimension of $(L[p^{\infty}])^{\vee}$ equals $\rank(L)$.

To determine $\dim((G[p^{\infty}])^{\vee})$, observe that the sequence
\[
0 \to T[p^{\infty}] \to G[p^{\infty}] \to A[p^{\infty}] \to 0
\]
induces:
\[
0 \to (A[p^{\infty}])^{\vee} \to (G[p^{\infty}])^{\vee} \to (T[p^{\infty}])^{\vee} \to 0,
\]
with $(A[p^{\infty}])^{\vee} \cong A^{\vee}[p^{\infty}]$ (\cite[Theorem 19.1]{oort2006commutative}) and $(T[p^{\infty}])^{\vee}$ \'etale, hence $\dim((G[p^{\infty}])^{\vee}) = \dim(A\ve)=\dim(A)$.

Thus:
\[
\dim \Lie\left((M[p^{\infty}])^{\vee}\right) = \rank(L) + \dim(A).
\]
By Tate's theorem on Hodge–Tate decompositions for $p$-divisible groups (\cite[\S4]{tate1967pdivisble}), the Tate module $\Vp(G)$ of a $p$-divisible group $G$ over $\cO_K$ is a Hodge–Tate Galois representation, with Hodge-Tate weights $0$ and $1$ occurring with multiplicities equal to the dimensions of the Cartier dual $G\ve$ and of $G$, respectively. This concludes the proof.
\end{proof}

%\begin{defn}[\cite{andreatta2005crystalline}]
Let $k$ be a perfect field of characteristic $p$, $\W(k)$ its ring of Witt vectors equipped with the Frobenius automorphism $\sigma$, and $K_0$ the field of fractions of $\W(k)$. Let $\D$ denote the contravariant Dieudonn\'e crystal. Following \cite{andreatta2005crystalline}, the \emph{crystalline realization} of a 1-motive $M$ is the $\W(k)$-module:
\[
\Tcrys(M) := \varprojlim_n \D(M[p^{\infty}]^{\vee})(\Spec k \to \Spec \W_n(k)).
\]
Similarly, we define:
\[
\Tcrysv(M) := \varprojlim_n \D(M[p^{\infty}])(\Spec k \to \Spec \W_n(k)),
\]
called the \emph{Barsotti–Tate crystal} of $M$. They admit $\sigma$-semilinear operators: Frobenius $F$ and Verschiebung $V$ with $FV=VF=[p]$.
%\end{defn}

The functor $M \mapsto M[p^{\infty}]$ is exact and covariant, while the Dieudonn\'e functor and the Cartier dual are exact and contravariant. Hence, $M \mapsto \Tcrys(M)$ is exact and covariant.

For $M = [L \to G]$,  endow $\Tcrys(M)$ with the weight filtration:
\[
\W^i(\Tcrys(M)) =
\begin{cases}
\Tcrys(M), & i \geq 0, \\
\Tcrys(G), & i = -1, \\
\Tcrys(T), & i = -2, \\
0, & i \leq -3,
\end{cases}
\]
where $G$ is an extension of an abelian scheme $A$ by a torus $T$.

\begin{remark}\label{cor: Hodge filtration and tangent space on Dieudonne module}
Let $M = [L \to G]$ be a 1-motive over a $p$-adic local field $K$ with good reduction, and set $D = \Tcrys(\bar{M})$. By the crystalline--de Rham comparison isomorphism ( \cite[Theorem A']{andreatta2005crystalline}), we have a canonical isomorphism:
\begin{equation}\label{crystalline-de Rham comparison isomorphism}
D \otimes_{W(k)} K \cong \TdR(M_K).
\end{equation}
The Hodge filtration on $\TdR(M_K)$ induces a filtration on $N:=D \otimes_{\W(k)} K$ via the isomorphism above, endowing $N$ with the structure of a filtered isocrystal (or a filtered $\varphi$-modules); that is, an isocrystal $D\otimes_{\W(k)}K_0$ together with a structure of a filtered vector space over $K$ on $N$.
\end{remark}

This completes the construction of the standard homological realizations of 1-motives, providing the  foundation for the $p$-adic integration theory developed in the next section.

%%%%%%%%%%%%%%%%%%%%%%%%%%%%%%%%%%%%%%%%%%%
%%
%% Section 3. P-adic integration theory for 1-motives with good reduction
%%
%%%%%%%%%%%%%%%%%%%%%%%%%%%%%%%%%%%%%%%%%%%

\section{P-adic integration theory for 1-motives with good reduction}\label{section: p-adic integration}

\subsection{Overview and Motivation}

Building on the foundational work of Coleman and Colmez, we aim to extend $p$-adic integration theory to the setting of 1-motives with good reduction. Coleman pioneered the theory of $p$-adic differential equations with applications to integration, while Colmez introduced a $p$-adic integration pairing for abelian varieties, which has since played a central role in $p$-adic Hodge theory and the study of $p$-adic periods.

Specifically, Colmez constructed a functorial, bilinear, and perfect pairing for an abelian variety $A$ with good reduction over a local field $K$:
\[
\int_A: \Tp(A) \times \operatorname{H}^1_{\mathrm{dR}}(A) \to \Bt,
\]
where $\Bt := \BdRp/I^2$ denotes the ring of truncated de Rham periods. Here,  $I$ is the maximal ideal of $\BdRp$, and $\C_p = \BdRp/I$ its residue field. 

This pairing is $\Gamma_K$-equivariant in the first argument and compatible with the Hodge filtration in the second. In particular, by restricting to the space of invariant differentials $\operatorname{H}^0(A, \Omega^1_A)$, Colmez obtains a refined pairing
\[
\langle\cdot, \cdot\rangle: \Tp(A) \times \operatorname{H}^0(A, \Omega^1_A) \to \C_p(1),
\]
recovering Fontaine's canonical integration map $\phi_A: \Tp(A) \to \Lie(A) \otimes_K \C_p(1)$.

The present section generalizes this construction to 1-motives $M = [L \to G]$ over a $p$-adic local field $K$ with good reduction. We define a $p$-adic integration pairing
\[
\int: \Tp(M) \times \TdRv(M) \to \Bt,
\]
which is functorial, bilinear, perfect, and compatible with both the Hodge and weight filtrations. It interpolates the pairings of Colmez and Fontaine in the cases $M = [0 \to A]$ and $M = [L \to 0]$, respectively.

We equip $\TdRv(M)$ with the Hodge filtration defined by
\begin{equation}
    \Fil^i\TdRv(M)=\begin{cases}
        \TdRv(M), & i \leq 0, \\
        \coLie(G), & i = 1, \\
        0, & i \geq 2.
    \end{cases}
\end{equation}
Let $t$ denote a uniformizer of $\BdRp$. The induced filtration on the period ring $\BdR = \BdRp[t^{-1}]$ is given by
\[
\Fil^i\BdR = t^i\BdRp / t^{i+1} \BdRp \cong \C_p(i), \quad \text{for } i \in \Z.
\]
This filtration induces a corresponding filtration on the quotient ring $\Bt = \BdRp / t^2 \BdRp$:
\[
\Fil^i \Bt = 
\begin{cases}
    \Bt, & i \leq 0, \\
    \C_p(1), & i = 1, \\
    0, & i \geq 2.
\end{cases}
\]

We say that the $p$-adic integration pairing
\[
\int \colon \Tp(M) \times \TdRv(M) \to \Bt
\]
respects the filtration if, for all $x \in \Tp(M)$ and all $i \in \Z$, we have
\[
\omega \in \Fil^i \TdRv(M) \quad \Rightarrow \quad \int_x \omega \in \Fil^i \Bt.
\]

This compatibility with the Hodge filtration reflects the deep analogy with classical period pairings, and will play a central role in formulating and proving $p$-adic period conjectures in the following sections.

%%%%%%%%
%%%%%%%%

\subsection{Fontaine’s Map  and Infinitesimal Deformations}

To build the integration theory for 1-motives, we begin by recalling the canonical construction of Fontaine’s map in the setting of
 semi-abelian schemes. This is achieved via deformation theory over the universal pro-infinitesimal
 thickening of $\cO_{\C_p}$.

Consider the natural inclusion $\BdRp \hookrightarrow \BdR$. It is known that the surjection $\theta\colon \BdRp \to \C_p$ admits a canonical section over $\bar{K} \subset \C_p$, i.e., there exists an embedding $\bar{K} \hookrightarrow \BdRp$ whose image is dense. Composing this embedding with the projection $\BdRp \to \Bt := \BdRp/I^2$ yields an injective map $\bar{K} \to \Bt$.

Let $\bar{I} := I/I^2$, and let $\Omega := \Omega_{\Spec(\cO_{\bar{K}})/\Spec(\cO_K)}$ denote the sheaf of relative Kähler differentials, with universal derivation $d\colon \cO_{\bar{K}} \to \Omega$. Define $\cA := \ker d$, so that we obtain the standard exact sequence
\[
0 \to \cA \to \cO_{\bar{K}} \xrightarrow{d} \Omega \to 0.
\]
Multiplication by $p^n$ induces a commutative diagram of $\cO_K$-modules:
\begin{equation}
\begin{tikzcd}
0 \arrow[r] & \cA \arrow[r] \arrow[d, "{[p^n]}", tail] & \cO_{\bar{K}} \arrow[r] \arrow[d, "{[p^n]}", tail] & \Omega \arrow[r] \arrow[d, "{[p^n]}", two heads] & 0 \\
0 \arrow[r] & \cA \arrow[r]                            & \cO_{\bar{K}} \arrow[r]                            & \Omega \arrow[r]                                 & 0.
\end{tikzcd}
\end{equation}
Since multiplication by $p^n$ is injective on $\cO_{\bar{K}}$ and surjective on $\Omega$, the snake lemma yields an exact sequence
\[
0 \to \Omega[p^n] \to \cA/p^n\cA \to \cO_{\bar{K}}/p^n\cO_{\bar{K}} \to 0.
\]
The system $\{\Omega[p^n]\}_{n \geq 1}$ forms an inverse system with surjective transition maps, hence satisfies the Mittag-Leffler condition. Passing to projective limits, we obtain the short exact sequence
\[
0 \to \Tp\Omega \to \widehat{\cA} \to \cO_{\C_p} \to 0,
\]
where $\widehat{\cA} := \varprojlim \cA/p^n\cA$. 

The ring $\BdRp$ is the universal pro-infinitesimal thickening of $\cO_{\C_p}$, and this universal property implies that there exists a unique isomorphism $\At \xrightarrow{\sim} \widehat{\cA}$ compatible with all structure maps where $\At$ is the ring of integers $\Bt$. In particular, we obtain a commutative diagram
\begin{equation}
\begin{tikzcd}
0 \arrow[r] & \bar{I} \arrow[r] \arrow[d, "\cong"] & \At \arrow[r] \arrow[d, "\cong"] & \cO_{\C_p} \arrow[r] \arrow[d, Rightarrow, no head] & 0 \\
0 \arrow[r] & \Tp\Omega \arrow[r]                  & \widehat{\cA} \arrow[r]          & \cO_{\C_p} \arrow[r]                                & 0,
\end{tikzcd}
\end{equation}
in which all maps are $\Gamma_K$-equivariant. Inverting $p$ in this diagram, we obtain isomorphisms
\[
\Tp\Omega[1/p] \cong t\Bt / t^2\Bt \cong \C_p(1), \qquad \Bt \cong \widehat{\cA}[1/p].
\]

This infinitesimal lifting property also has implications for group schemes. Namely, by the formal smoothness of $G$, the map
\[
G(\At) \to G(\cO_{\C_p}) = G(\At / \bar{I})
\]
is surjective, and its kernel is identified with $\Lie(G) \otimes_{\cO_K} \bar{I}$. Thus, we have the short exact sequence
\begin{equation} \label{exact seq for Fontaine'spairing}
\begin{tikzcd}
0 \arrow[r] & \Lie(G) \otimes_{\cO_K} \bar{I} \arrow[r] & G(\At) \arrow[r] & G(\cO_{\C_p}) \arrow[r] & 0.
\end{tikzcd}
\end{equation}

Multiplication by $p^n$ on $G$ gives rise to a commutative diagram:
\begin{equation}
\begin{tikzcd}
0 \arrow[r] & \Lie(G) \otimes_{\cO_K} \bar{I} \arrow[r] \arrow[d, "{[p^n]}"] & G(\At) \arrow[r] \arrow[d, "{[p^n]}"] & G(\cO_{\C_p}) \arrow[r] \arrow[d, "{[p^n]}"] & 0 \\
0 \arrow[r] & \Lie(G) \otimes_{\cO_K} \bar{I} \arrow[r]                      & G(\At) \arrow[r]                      & G(\cO_{\C_p}) \arrow[r]                       & 0.
\end{tikzcd}
\end{equation}
Applying the snake lemma again, we obtain a $\Gamma_K$-equivariant morphism
\[
\phi_n \colon G[p^n](\cO_{\C_p}) \to \Lie(G) \otimes_{\cO_K} \bar{I}/p^n\bar{I}.
\]
Using the natural identification $\Tp\Omega \cong \bar{I}$, we rewrite this as
\[
\phi_n \colon G[p^n](\cO_{\C_p}) \to \Lie(G) \otimes_{\cO_K} \Omega[p^n].
\]
Passing to the inverse limit and inverting $p$ yields the map
\[
\varphi_G \colon \Tp(G) \to \Lie(G) \otimes_K \C_p(1),
\]
which we call \emph{Fontaine’s map} for the semi-abelian scheme $G$. It is surjective after extension of scalars to $\C_p$.

The induced pairing 
\[
\Tp(G) \times \Lie^\vee(G) \to \C_p(1)
\]
 coincides with the Fontaine's pairing associated with the formal p
divisible group of G. 

\subsection{The $p$-adic Integration Pairing for Semi-Abelian Schemes}

We now refine the above construction using the universal vector extension of G, leading to a $p$-adic integration pairing that refines Fontaine’s.

 Let $\Ex(G)$ denotes the universal vector extension of $G$. This is a vector group extension 
\[
0 \to V \to \Ex(G) \to G \to 0,
\]
where $V = \Ext^1(G, \G_a)^\vee$.

We now construct the $p$-adic integration pairing associated with a semi-abelian scheme $G$ over $\cO_K$. The first step is to define the bilinear pairing
\[
\int^{\varpi}_G \colon \Tp(G) \times \coLie(\Ex(G)) \to \Bt,
\]
by analyzing the exact sequence of $\Bt$-points induced by lifting $G(\cO_{\C_p})$ to $\Ex(G)(\Bt)$.
The construction proceeds via the following diagram of sheaves with exact rows and columns:
\begin{equation} \label{big diagram for integration G}
\begin{tikzcd}
            & 0 \arrow[d]                                       & 0 \arrow[d]                                            & 0 \arrow[d]                                       &   \\
0 \arrow[r] & \Lie(V)\otimes_{\cO_K}\bar{I} \arrow[d] \arrow[r] & \Lie(\Ex(G))\otimes_{\cO_K}\bar{I} \arrow[d] \arrow[r] & \Lie(G)\otimes_{\cO_K}\bar{I} \arrow[r] \arrow[d] & 0 \\
0 \arrow[r] & V(\At) \arrow[d] \arrow[r]                        & \Ex(G)(\At) \arrow[d] \arrow[r]                        & G(\At) \arrow[r] \arrow[d]                        & 0 \\
0 \arrow[r] & V(\cO_{\C_p}) \arrow[d] \arrow[r]                 & \Ex(G)(\cO_{\C_p}) \arrow[d] \arrow[r]                 & G(\cO_{\C_p}) \arrow[r] \arrow[d]                 & 0 \\
            & 0                                                 & 0                                                      & 0                                                 &  
\end{tikzcd}
\end{equation}

From this diagram, we deduce that the map $\Ex(G)(\At) \to G(\cO_{\C_p})$ is surjective, and that its kernel is given by
\[
\cK := \frac{(V \otimes_{\cO_K} \At) \oplus (\Lie(\Ex(G)) \otimes_{\cO_K} \bar{I})}{\Lie(V) \otimes_{\cO_K} \bar{I}},
\]
where $\Lie(V) \cong V$ is embedded diagonally into the direct sum. This yields the short exact sequence
\begin{equation} \label{4.2.7}
\begin{tikzcd}
0 \arrow[r] & \cK \arrow[r] & \Ex(G)(\At) \arrow[r] & G(\cO_{\C_p}) \arrow[r] & 0.
\end{tikzcd}
\end{equation}

Next, we analyze the behavior under multiplication by $p^n$. This produces a commutative diagram:
\begin{equation}
\begin{tikzcd}
0 \arrow[r] & \cK \arrow[d, "{[p^n]}"] \arrow[r] & \Ex(G)(\At) \arrow[d, "{[p^n]}"] \arrow[r] & G(\cO_{\C_p}) \arrow[d, "{[p^n]}"] \arrow[r] & 0 \\
0 \arrow[r] & \cK \arrow[r]                      & \Ex(G)(\At) \arrow[r]                      & G(\cO_{\C_p}) \arrow[r]                      & 0
\end{tikzcd}
\end{equation}
Applying the snake lemma, we obtain a connecting map
\[
G[p^n](\cO_{\C_p}) \to \cK/p^n\cK.
\]
Composing with the natural projection
\[
\cK/p^n\cK \to \Lie(\Ex(G)) \otimes_{\cO_K} \At/p^n\At,
\]
we define a map
\[
\varpi_{n,G} \colon G[p^n](\cO_{\C_p}) \to \Lie(\Ex(G)) \otimes_{\cO_K} \At/p^n\At.
\]
Passing to the inverse limit over $n$, we obtain the desired map
\[
\varpi_G \colon \Tp(G) \to \TdR(G) \otimes_{\cO_K} \At,
\]
which we call the \emph{$p$-adic integration map}. This induces a bilinear pairing on the generic fibre
\[
\int^{\varpi}_G \colon \Tp(G_K) \times \TdRv(G_K) \to \Bt,
\]
where $G_K := G \times_{\Spec \cO_K} \Spec K$, and $\TdRv(G)$ denotes the linear dual of $\TdR(G)$.

\begin{prop} \label{theorem integration pairing for G}
The $p$-adic integration pairing $\int^{\varpi}$ is bilinear, perfect, and $\Gamma_K$-equivariant in the first argument. Moreover, it respects the Hodge filtrations.
\end{prop}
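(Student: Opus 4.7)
The plan is to verify the four claims—bilinearity, $\Gamma_K$-equivariance in the first argument, compatibility with the Hodge filtration, and perfectness—separately, the first three being essentially formal consequences of the construction while perfectness is genuine and is reduced to the known cases of tori and abelian varieties via dévissage along the weight filtration.

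Bilinearity and Galois equivariance are immediate. The map $\varpi_{n,G}$ is the snake-lemma connecting homomorphism for the commutative square obtained by applying multiplication by $p^n$ to \eqref{4.2.7}, and every object and arrow in diagram \eqref{big diagram for integration G} together with the $p^n$-multiplication square is $\cO_K$-linear and carries a compatible continuous $\Gamma_K$-action. Hence each $\varpi_{n,G}$ is $\Z_p$-linear and $\Gamma_K$-equivariant, and both properties are preserved upon taking the inverse limit and inverting $p$.

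For the Hodge filtration compatibility, I would unpack the structure of $\cK$. Using $V(\At) = V \otimes_{\cO_K} \At$ and the identification $\Lie(V) = V$, the natural map $\cK \to \Lie(\Ex(G)) \otimes_{\cO_K} \At$ sends the class of $(v,w) \in V \otimes \At \oplus \Lie(\Ex(G)) \otimes \bar{I}$ to $v + w$, where $V$ is regarded as a submodule of $\Lie(\Ex(G))$; this is well-defined because $(u,-u)$ for $u \in V \otimes \bar{I}$ maps to zero. If $\omega \in \Fil^1 \TdRv(G) = \coLie(G)$, then $\omega$ vanishes on $V \subset \Lie(\Ex(G))$, so the pairing $\langle \omega, v+w \rangle$ lies in $\bar{I}$. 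Taking the inverse limit over $n$ and inverting $p$ yields $\int^\varpi_x \omega \in \bar{I}[1/p] = \Fil^1 \Bt \cong \C_p(1)$, as required.

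The perfectness statement is the substantive content, and I plan to prove it by dévissage along $0 \to T \to G \to A \to 0$. This produces compatible short exact sequences $0 \to \Tp(T) \to \Tp(G) \to \Tp(A) \to 0$ on Tate modules (from the exact sequence of $p$-divisible groups over $\cO_K$) and $0 \to \TdR(T) \to \TdR(G) \to \TdR(A) \to 0$ on de Rham realizations (by exactness of $(\cdot)^{\natural}$ and of $\Lie$). Functoriality of the snake-lemma construction assembles the pairings for $T$, $G$, and $A$ into a commutative diagram, and the five-lemma reduces perfectness for $G$ to the two base cases. For a torus one has $V(T) = 0$ and $\Ex(T) = T$, so the construction degenerates to the classical Kummer pairing $\Tp(T) \times \coLie(T) \to \C_p(1) \subset \Bt$, which is perfect. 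For an abelian variety $A$, I would identify $\int^\varpi_A$ with Colmez's $p$-adic integration pairing and invoke its perfectness. The main obstacle is precisely this last identification, which requires a careful comparison of the two infinitesimal lifting procedures on $\Ex(A)(\At)$ and of the respective target rings; a subsidiary concern is checking that the snake-lemma connecting maps are genuinely functorial along $0 \to T \to G \to A \to 0$ at each finite level, so that the five-lemma diagram commutes before the inverse limit is taken.
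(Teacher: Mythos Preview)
Your proposal is correct and follows essentially the same approach as the paper: bilinearity and Galois equivariance are formal, perfectness is obtained by d\'evissage along $0 \to T \to G \to A \to 0$ with the torus case reducing to the Kummer pairing and the abelian case invoking Colmez. The only minor difference is that you handle Hodge-filtration compatibility by a single direct computation with the structure of $\cK$, whereas the paper verifies it case by case (explicitly for $\G_m$, by citation for $A$, then by d\'evissage) and defers the clean general argument to the later theorem on $1$-motives; your direct argument is in fact the same one the paper eventually gives there.
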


\begin{proof}
By construction, the pairing is bilinear and $\Gamma_K$-equivariant in the first argument. Let us verify the perfectness and filtration compatibility by considering several cases.

\smallskip

\textbf{Case 1: } $G = \G_m$. In this case, $V = 0$ since $\Ext^1(\G_m, \G_a) = 0$, and $\Ex(G) = \G_m$. Thus, we have
\[
\cK = \Lie(\G_m) \otimes_{\cO_K} \bar{I},
\]
and sequence \eqref{4.2.7} becomes
\[
0 \to \Lie(\G_m) \otimes_{\cO_K} \bar{I} \to \G_m(\At) \to \G_m(\cO_{\C_p}) \to 0.
\]
The resulting map $G[p^n](\cO_{\C_p}) \to \Lie(\G_m) \otimes_{\cO_K} \bar{I}/p^n\bar{I}$ lifts naturally to 
\[
\Lie(\G_m) \otimes_{\cO_K} \At/p^n\At,
\]
and hence taking inverse limits yields
\[
\Z_p(1) \to \Lie(\G_m) \otimes_K \C_p(1) \to \Lie(\G_m) \otimes_K \Bt.
\]
Thus, tensoring with $\Bt$ yields a $\Gamma_K$-equivariant isomorphism
\[
\Z_p(1) \otimes_{\Z_p} \Bt \xrightarrow{\sim} \Lie(\G_m) \otimes_{\cO_K} \Bt.
\]
The resulting pairing $\Z_p(1) \times \coLie(\G_m) \to \Bt$ respects the Hodge filtration, as the image of $\Z_p(1)$ lies in $\Lie(\G_m) \otimes_K \C_p(1) \subset \Lie(\G_m) \otimes_K \Bt$.

\smallskip

\textbf{Case 2: } $G = A$ an abelian scheme. In this case, the perfectness and Hodge compatibility of the pairing are proved by Colmez in \cite[Theorem 5.2]{colmez1992periodes}.

\smallskip

\textbf{Case 3: } $G$ a semi-abelian scheme. Let
\[
0 \to T \to G \to A \to 0
\]
be the standard decomposition of $G$ into a torus $T$ and abelian quotient $A$. We apply a d\'evissage argument with the commutative diagram with exact rows:
\begin{equation*}
\begin{tikzcd}
0 \arrow[r] & \Tp(T) \arrow[r] \arrow[d] & \Tp(G) \arrow[r] \arrow[d] & \Tp(A) \arrow[r] \arrow[d] & 0 \\
0 \arrow[r] & \TdR(T) \otimes \Bt \arrow[r] & \TdR(G) \otimes \Bt \arrow[r] & \TdR(A) \otimes \Bt \arrow[r] & 0
\end{tikzcd}
\end{equation*}
Since both the left and right vertical pairings are perfect and compatible with the Hodge filtration, it follows that the middle pairing inherits these properties.

The general statement on the filtration will be further elaborated in \cref{theorem p-adic integration pairing for M}.
\end{proof}

\subsection{Extension to 1-Motives}

We now extend the construction of the $p$-adic integration pairing to a general 1-motive $M = [L \xrightarrow{u} G]$. The universal vector extension $M^\natural = [L \xrightarrow{u^\natural} G^\natural]$ of $M$ fits into the exact sequence
\begin{equation} \label{exact sequence for colman pam for 1-motive}
0 \to V(M) \to G^\natural \to G \to 0.
\end{equation}
Following the strategy used in diagram \eqref{big diagram for integration G}, we obtain the following commutative diagram with exact rows and columns:
\begin{equation} \label{big diagram for integration M}
\begin{tikzcd}
            & 0 \arrow[d]                                       & 0 \arrow[d]                                            & 0 \arrow[d]                                       &   \\
0 \arrow[r] & \Lie(V(M))\otimes_{\cO_K}\bar{I} \arrow[d] \arrow[r] & \Lie(G^\natural)\otimes_{\cO_K}\bar{I} \arrow[d] \arrow[r] & \Lie(G)\otimes_{\cO_K}\bar{I} \arrow[r] \arrow[d] & 0 \\
0 \arrow[r] & V(M)(\At) \arrow[d] \arrow[r]                        & G^\natural(\At) \arrow[d] \arrow[r]                     & G(\At) \arrow[r] \arrow[d]                         & 0 \\
0 \arrow[r] & V(M)(\cO_{\C_p}) \arrow[d] \arrow[r]                 & G^\natural(\cO_{\C_p}) \arrow[d] \arrow[r]              & G(\cO_{\C_p}) \arrow[r] \arrow[d]                  & 0 \\
            & 0                                                 & 0                                                      & 0                                                 &  
\end{tikzcd}
\end{equation}

This diagram shows that the map $G^\natural(\At) \to G(\cO_{\C_p})$ is surjective with kernel
\[
\cK := \frac{(V(M) \otimes_{\cO_K} \At) \oplus (\Lie(G^\natural) \otimes_{\cO_K} \bar{I})}{\Lie(V(M)) \otimes_{\cO_K} \bar{I}}.
\]
We thus obtain the short exact sequence
\begin{equation} \label{7.3.2}
\begin{tikzcd}
0 \arrow[r] & \cK \arrow[r] & G^\natural(\At) \arrow[r] & G(\cO_{\C_p}) \arrow[r] & 0.
\end{tikzcd}
\end{equation}

To proceed, we define maps
\begin{align*}
q \colon L \times_{\cO_K} G &\to G, \quad (x,g) \mapsto u(x) + p^n g, \\
q^\natural \colon L \times_{\cO_K} G^\natural &\to G^\natural, \quad (x,g) \mapsto u^\natural(x) + p^n g.
\end{align*}
We denote by $\widetilde{M[p^n]} := \ker q$. Then, by \cref{formula M[n]}, the group scheme $M[p^n]$ is obtained as the quotient of $\widetilde{M[p^n]}$ by the image of $L$.

These maps and the exact sequence \eqref{7.3.2} yield the following commutative diagram with exact rows:
\begin{equation} \label{4.3.4}
\begin{tikzcd}
0 \arrow[r] & \cK \arrow[r] \arrow[d, "{[p^n]}"] & L \times_{\cO_K} G^\natural(\At) \arrow[r] \arrow[d, "q^\natural"] & L \times_{\cO_K} G(\cO_{\C_p}) \arrow[r] \arrow[d, "q"] & 0 \\
0 \arrow[r] & \cK \arrow[r]                         & G^\natural(\At) \arrow[r]                                   & G(\cO_{\C_p}) \arrow[r]                               & 0.
\end{tikzcd}
\end{equation}

Applying the snake lemma gives a map
\[
\widetilde{M[p^n]} \to \cK/p^n\cK,
\]
which, upon composition with the projection $\cK/p^n\cK \to \Lie(G^\natural) \otimes_{\cO_K} \At/p^n\At$, yields a map
\[
\widetilde{M[p^n]} \to \Lie(G^\natural) \otimes_{\cO_K} \At/p^n\At.
\]
Since the image of $L$ in $\widetilde{M[p^n]}$ maps to zero, the morphism factors through the quotient, inducing a well-defined map
\[
\varpi_{n,M} \colon M[p^n] \to \TdR(M) \otimes_{\cO_K} \At/p^n\At.
\]
Passing to the inverse limit over $n$ gives what we will call the $p$-adic integration map for the motive $M$:
\begin{equation} \label{Integration map for M}
\varpi_M \colon \Tp(M) \to \TdR(M) \otimes_{\cO_K} \At.
\end{equation}

This induces the $p$-adic integration pairing:
\begin{equation} \label{integration pairing for M}
\int^\varpi \colon \Tp(M_K) \times \TdRv(M_K) \to \Bt,
\end{equation}
where $M_K := M \times_{\Spec \cO_K} \Spec K$ is the generic fibre.

\smallskip

\noindent Similarly, by applying the same construction to diagram \eqref{exact seq for Fontaine'spairing}, we obtain the diagram
\begin{equation} \label{equation 4.3.6}
\begin{tikzcd}
0 \arrow[r] & \Lie(G)\otimes_{\cO_K}\bar{I} \arrow[r] \arrow[d, "{[p^n]}"] & L \times_{\cO_K} G(\At) \arrow[r] \arrow[d, "q_2"] & L \times_{\cO_K} G(\cO_{\C_p}) \arrow[r] \arrow[d, "q_1"] & 0 \\
0 \arrow[r] & \Lie(G)\otimes_{\cO_K}\bar{I} \arrow[r]                         & G(\At) \arrow[r]                                   & G(\cO_{\C_p}) \arrow[r]                               & 0,
\end{tikzcd}
\end{equation}
where $q_1$ and $q_2$ are induced by $(x,g) \mapsto u(x) + p^n g$. This yields a $\Gamma_K$-equivariant map
\[
\phi_{n,M} \colon M[p^n](\cO_{\C_p}) \to \Lie(G) \otimes_{\cO_K} \bar{I}/p^n\bar{I},
\]
and passing to the limit gives an analogue of Fontaine’s map for 1-motives:
\begin{equation}\label{Fontaine's map for M}
\varphi_M \colon \Tp(M) \to \Lie(G) \otimes_K \C_p(1).    
\end{equation}

The induced pairing
\begin{equation} \label{Fontaine's pairing for M}
\int^\varphi \colon \Tp(M) \times \coLie(G_K) \to \C_p(1)
\end{equation}
is referred to as Fontaine’s pairing for the motive $M$.

\smallskip

Finally, repeating the argument for the exact sequence \eqref{exact sequence for colman pam for 1-motive} leads to the diagram
\begin{equation} \label{4.3.9}
\begin{tikzcd}
0 \arrow[r] & V(M) \arrow[r] \arrow[d, "{[p^n]}"] & L \times G^\natural(\cO_{\C_p}) \arrow[r] \arrow[d, "q_2"] & L \times G(\cO_{\C_p}) \arrow[r] \arrow[d, "q_1"] & 0 \\
0 \arrow[r] & V(M) \arrow[r]  & L \times G^\natural(\cO_{\C_p}) \arrow[r] & L \times G(\cO_{\C_p}) \arrow[r] & 0,
\end{tikzcd}
\end{equation}
from which we obtain the map
\[
\psi_M \colon \Tp(M) \to V(M) \otimes_{\cO_K} \C_p,
\]
which we refer to as Coleman’s map for the 1-motive $M$.

\begin{remark}
    The Fontaine's map \eqref{Fontaine's map for M} generalizes the integration map $\phi_A: \Tp(A)\to \Lie(A)\otimes_K\C_p(1)$ for abelian variety $A$, originally introduced by Fontaine in \cite{Fontaine1982}. This map realizes Hodge-Tate decomposition after tensoring with $\C_p$. The same property holds for Fontaine's map $\phi_M$ associated to a 1-motive $M$. In particular, the map \[\phi_M\otimes 1_{\C_p}:\Tp(M)\otimes_{\Z_p}\C_p\to \Lie(G)\otimes_K\C_p(1)\] is surjective.
\end{remark}

\smallskip

We now prove the central result of this section. It generalizes \cite[Theorem 5.2]{colmez1992periodes} to 1-motives.

\begin{thm} \label{theorem p-adic integration pairing for M}
The $p$-adic integration pairing $\int^\varpi$ for a 1-motive $M$, given by \eqref{integration pairing for M}, is bilinear, perfect, and $\Gamma_K$-equivariant in the first argument. Moreover, it respects the Hodge filtration: for all $\omega \in \Fil^1 \TdRv(M_K)$ and $x \in \Tp(M)$, we have
\[
\int^\varpi_x \omega \,\in \Fil^1 \Bt.
\]
In particular, for $\omega \in \coLie(G_K) = \Fil^1 \TdRv(M_K)$, we have
\[
\int^\varpi_x \omega = \int^\varphi_x \omega.
\]
\end{thm}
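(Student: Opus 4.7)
The plan is to verify bilinearity and Galois equivariance directly from the snake-lemma construction, deduce filtration compatibility together with the identification $\int^{\varpi}=\int^{\varphi}$ on $\Fil^1$ by comparing \eqref{big diagram for integration M} with \eqref{equation 4.3.6} via the projection $G^\natural\twoheadrightarrow G$, and finally reduce perfectness to the cases of semi-abelian schemes and of lattices by a five-lemma dévissage along the canonical weight sequence $0\to [0\to G]\to M\to [L\to 0]\to 0$.

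Bilinearity is immediate because the snake connecting maps extracted from \eqref{4.3.4} are additive in each variable. Galois equivariance follows because every object in the construction ($G$, $G^\natural$, $\cO_{\C_p}$, $\bar I$, $\At$) carries a compatible $\Gamma_K$-action and all structure maps are equivariant, so $\varpi_M$ commutes with $\Gamma_K$. Since $\TdRv(M)$ is defined over $K$, the induced pairing is $\Gamma_K$-equivariant in the first argument.

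For the filtration compatibility, the projection $\pi\colon G^\natural\twoheadrightarrow G$ induces a morphism from \eqref{4.3.4} to \eqref{equation 4.3.6}; functoriality of the snake lemma yields a commutative triangle
\[
\begin{tikzcd}
\Tp(M) \arrow[r, "\varpi_M"] \arrow[dr, "\varphi_M"'] & \TdR(M)\otimes_{\cO_K}\At \arrow[d, "\pi_{\ast}\otimes 1"] \\
 & \Lie(G)\otimes_{\cO_K}\At.
\end{tikzcd}
\]
By construction, $\varphi_M$ factors at each finite level through $\Lie(G)\otimes_{\cO_K}\bar I/p^n\bar I$, so after inverting $p$ its image lies in $\Lie(G)\otimes_K\C_p(1)$. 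Dually, if $\omega\in\Fil^1\TdRv(M_K)=\coLie(G_K)$, then $\int^{\varpi}_x\omega$ factors through the vertical arrow above, lands in $\C_p(1)=\Fil^1\Bt$, and coincides there with $\int^{\varphi}_x\omega$. This proves both the filtration statement and the final identity in one step.

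Perfectness means that the induced $\Bt$-linear map $\Tp(M)\otimes_{\Z_p}\Bt\to\TdR(M)\otimes_K\Bt$ is bijective. Functoriality of $\varpi$ together with the exactness of $M\mapsto M^\natural$ produces a commutative diagram of $\Bt$-modules with exact rows
\[
\begin{tikzcd}[column sep=small]
0 \arrow[r] & \Tp(G)\otimes\Bt \arrow[r] \arrow[d] & \Tp(M)\otimes\Bt \arrow[r] \arrow[d] & \Tp(L)\otimes\Bt \arrow[r] \arrow[d] & 0 \\
0 \arrow[r] & \TdR(G)\otimes\Bt \arrow[r] & \TdR(M)\otimes\Bt \arrow[r] & \TdR(L)\otimes\Bt \arrow[r] & 0.
\end{tikzcd}
\]
The left vertical is an isomorphism by \cref{theorem integration pairing for G}. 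For the right vertical, in the lattice case one has $G=0$ and $G^\natural=V(M)=L\otimes\G_a$, so diagram \eqref{4.3.4} collapses and the snake connecting map reduces to the canonical inclusion $L\otimes\Z_p\hookrightarrow L\otimes\At$, yielding the natural identification $L\otimes\Bt\cong\TdR(L)\otimes\Bt$ after base change. The five lemma then gives perfectness for $M$. The principal technical point is to verify that the snake data for $G$, $M$, and $L$ genuinely fit into the displayed morphism of short exact sequences — i.e., that the kernels $\cK$ in \eqref{4.3.4} respect the weight filtration — which in turn follows from the compatibility of the universal vector extension with the canonical weight sequence.
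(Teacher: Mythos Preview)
Your proof is correct and follows essentially the same strategy as the paper: bilinearity and equivariance from the construction, perfectness by a five-lemma d\'evissage along the canonical weight sequence (with the semi-abelian case supplied by \cref{theorem integration pairing for G} and the lattice case handled directly), and filtration compatibility by comparing the snake data for $G^\natural$ with that for $G$ via the projection $\pi$. The paper is slightly more explicit in the filtration step, constructing a map $g\colon \cK \to \Lie(G)\otimes \bar I$ and checking the resulting square of snake diagrams by hand to obtain the commutative square \eqref{commutative diagram respect filtration}; your appeal to ``functoriality of the snake lemma'' encodes the same verification, though note that your triangle as written places $\varphi_M$ in $\Lie(G)\otimes \At$ rather than in $\Lie(G)\otimes \C_p(1)$---the paper's square makes this factorisation precise.
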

\begin{proof}
The bilinearity and $\Gamma_K$-equivariance of the pairing follow directly from the construction.

We first verify perfectness in several cases. When $M = [L \to 0]$, we have $G^\natural = V(L)$ and
\[
\cK = V(L) \otimes_{\cO_K} \At / \bar{I} = V(L) \otimes \C_p.
\]
In this case, diagrams \eqref{4.3.4} and \eqref{4.3.9} coincide, and diagram \eqref{equation 4.3.6} becomes trivial. Thus, the $p$-adic integration map $\varpi_L$ agrees with Coleman’s map
\[
\psi_L \colon \Z_p \otimes L \to V(L) \otimes_{\cO_K} \C_p,
\]
given by $(x_n) \mapsto [p^n] x_n$ for $x_n \in L[p^n]$. The induced map
\[
L \otimes \Bt \to V(L) \otimes \Bt
\]
is clearly an isomorphism and respects the filtration.

For $M = [0 \to G]$, the pairing $\int^\varpi$ is perfect by Proposition \ref{theorem integration pairing for G}. In the general case $M = [L \to G]$, the canonical exact sequence of 1-motives yields a diagram with exact rows:
\[
\begin{tikzcd}[column sep=small]
0 \arrow[r] & \Tp(G) \arrow[r] \arrow[d, "\varpi_G"] & \Tp(M) \arrow[r] \arrow[d, "\varpi_M"] & \Z_p \otimes L \arrow[r] \arrow[d] & 0 \\
0 \arrow[r] & \Lie(\Ex(G)) \otimes \Bt \arrow[r] & \Lie(G^\natural) \otimes \Bt \arrow[r] & V(L) \otimes \Bt \arrow[r] & 0.
\end{tikzcd}
\]
Since the pairings induced by the left and right vertical arrows are perfect, the same holds for the middle one.

It remains to prove compatibility with the Hodge filtration. Since $V(M)$ is the kernel of the projection $\pi \colon G^\natural \to G$, the map $\Lie(G^\natural) \otimes \bar{I} \to \Lie(G) \otimes \bar{I}$ factors through $\cK$, yielding a map $g \colon \cK \to \Lie(G) \otimes \bar{I}$.

The commutative diagram
\[
\begin{tikzcd}
0 \arrow[r] & \cK \arrow[d, "g"] \arrow[r] & L \times G^\natural(\At) \arrow[d, "f"] \arrow[r] & L \times G(\C_p) \arrow[d, Rightarrow, no head] \arrow[r] & 0 \\
0 \arrow[r] & \Lie(G) \otimes \bar{I} \arrow[r] & L \times G(\At) \arrow[r] & L \times G(\C_p) \arrow[r] & 0
\end{tikzcd}
\]
along with the relation $q = q_1$, $\pi \circ q^\natural = q_2 \circ f$, and $g \circ [p^n] = [p^n] \circ g$ ensures that applying the snake lemma to diagrams \eqref{4.3.4} and \eqref{equation 4.3.6} yields a commutative square
\[
\begin{tikzcd}
M[p^n] \arrow[r] & \cK/p^n\cK \arrow[d] \\
M[p^n] \arrow[u, Rightarrow, no head] \arrow[r] & \Lie(G) \otimes \bar{I}/p^n\bar{I},
\end{tikzcd}
\]
which extends, after base change, to a commutative diagram
\[
\begin{tikzcd}
M[p^n] \arrow[r] & \cK/p^n\cK \arrow[d] \arrow[r] & \Lie(G^\natural) \otimes \Bt/p^n\Bt \arrow[d] \\
M[p^n] \arrow[u, Rightarrow, no head] \arrow[r] & \Lie(G) \otimes \bar{I}/p^n\bar{I} \arrow[r] & \Lie(G) \otimes \Bt/p^n\Bt.
\end{tikzcd}
\]
Therefore, the $p$-adic integration map $\varpi_M$ factors through Fontaine’s map $\varphi_M$:
\begin{equation}\label{commutative diagram respect filtration}
\begin{tikzcd}
\Tp(M) \arrow[r, "\varpi_M"] \arrow[d, "\varphi_M"] & \Lie(G^\natural) \otimes \Bt \arrow[d, two heads] \\
\Lie(G) \otimes \C_p(1) \arrow[r, hook] & \Lie(G) \otimes \Bt.
\end{tikzcd}
\end{equation}
This implies that $\int^\varpi_x \omega = \int^\varphi_x \omega$ whenever $\omega \in \Fil^1 \TdRv(M_K) = \coLie(G_K)$, completing the proof.
\end{proof}

\begin{remark}
Our construction of the Fontaine map $\varphi_M$ for 1-motives with good reduction not only extends Fontaine’s original map for abelian varieties, but also  enables
us to derive a statement which generalizes \cite[Theorem A.4]{iovita_p_2022} for the case of abelian varieties to 1-motives. This theorem sates that the kernel of Fontaine’s map consists of all $\Gamma_F$-invariant elements in the Tate module, where $F := K^{un}$ is the maximal unramified extension of $K$.
Moreover, we can obtain a criterion for injectivity of the $p$-adic integration map $\varpi_M$:
\end{remark}

\begin{prop} \label{kernel p-adic integration map varpi}
Let $M = [L \to G]$ be a 1-motive over $\cO_K$, and let $\sG^0$ denote the connected component of $G[p^\infty]$. If $\Tp(M)^{\Gamma_K} = 0$ and $\ker(\varpi_G) \cap \Tp(\sG^0) = 0$, then $\ker(\varpi_M) = 0$. In other words, under the vanishing of $\Gamma_K$-invariants in $\Tp(M)$, injectivity of $\left.\varpi_G\right|_{\Tp(\sG^0)}$ implies injectivity of $\varpi_M$.
\end{prop}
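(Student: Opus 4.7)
The plan is to prove the proposition by two successive reductions: first from $M$ to $G$ via the canonical sequence of 1-motives, and then from $G$ to $\sG^0$ via the connected-\'etale decomposition of $G[p^\infty]$. The two Galois hypotheses play complementary roles in the two reductions.

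For the first reduction, I would apply the snake lemma to the commutative diagram used at the end of the proof of \cref{theorem p-adic integration pairing for M}, whose rows are
\[
0\to\Tp(G)\to\Tp(M)\to\Tp(L)\to 0
\]
and
\[
0\to\Lie(\Ex(G))\otimes\Bt\to\Lie(G^\natural)\otimes\Bt\to V(L)\otimes\Bt\to 0,
\]
with vertical maps $\varpi_G$, $\varpi_M$, $\psi_L$. Case~1 of the proof of \cref{theorem p-adic integration pairing for M} exhibits $\psi_L$ as an isomorphism after tensoring with $\Bt$, in particular injective, so the snake lemma yields an isomorphism $\ker(\varpi_M)\cong\ker(\varpi_G)$ induced by the inclusion $\Tp(G)\hookrightarrow\Tp(M)$. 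It therefore suffices to prove $\ker(\varpi_G)=0$.

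For the second reduction, I would exploit the exact sequence $0\to\Tp(\sG^0)\to\Tp(G)\to\Tp(G^{\et})\to 0$, in which the inertia subgroup $I_K\subset\Gamma_K$ acts trivially on the unramified quotient $\Tp(G^{\et})$. By $\Gamma_K$-equivariance of $\varpi_G$, the subgroup $\ker(\varpi_G)$ is $\Gamma_K$-stable; for any $x\in\ker(\varpi_G)$ and $\sigma\in I_K$, the element $\sigma x-x$ has trivial image in $\Tp(G^{\et})$, so it lies in $\Tp(\sG^0)$, and also in $\ker(\varpi_G)$. The first hypothesis $\ker(\varpi_G)\cap\Tp(\sG^0)=0$ then forces $\sigma x=x$, giving $\ker(\varpi_G)\subseteq\Tp(G)^{I_K}\subseteq\Tp(M)^{I_K}$.

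The last step combines this with the hypothesis $\Tp(M)^{\Gamma_K}=0$. The submodule $\ker(\varpi_G)$ is $\Gamma_K$-stable and pointwise $I_K$-invariant, so the $\Gamma_K$-action on it factors through the procyclic quotient $\Gamma_K/I_K$ generated by the arithmetic Frobenius. The main obstacle is to promote $I_K$-invariance to genuine pointwise $\Gamma_K$-invariance: here I would exploit the compatibility of $\varpi_G$ with the crystalline Frobenius on the Dieudonn\'e crystal afforded by the good reduction of $G$, together with the factorization of $\varphi_G$ through $\varpi_G$ from diagram \eqref{commutative diagram respect filtration}, to rule out any non-trivial Frobenius action on $\ker(\varpi_G)$. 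Once that rigidity is in place, $\ker(\varpi_G)$ embeds into $\Tp(M)^{\Gamma_K}=0$, and the isomorphism from the first step yields $\ker(\varpi_M)=0$.
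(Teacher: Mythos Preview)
Your first reduction via the snake lemma is correct and is in fact cleaner than the paper's approach: the paper reduces from $\ker(\varpi_M)$ to $\ker(\varpi_G)$ by a Galois cohomology argument (showing the connecting map $\delta\colon \Tp(L)\to H^1(K,\Tp(G))$ is injective under the hypothesis $\Tp(M)^{\Gamma_K}=0$), whereas your direct use of the injectivity of $\psi_L=\varpi_{[L\to 0]}$ achieves the same conclusion without any cohomological machinery.

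The genuine gap is in your final step. You correctly establish $\ker(\varpi_G)\subset\Tp(G)^{I_K}$ using that inertia acts trivially on the \'etale quotient, but you then acknowledge that promoting $I_K$-invariance to $\Gamma_K$-invariance is ``the main obstacle'', and your proposed resolution---an appeal to compatibility of $\varpi_G$ with crystalline Frobenius---is only a heuristic, not an argument. Nothing in the crystalline comparison forces the arithmetic Frobenius to act trivially on $\ker(\varpi_G)$, so the sketch does not close the gap.

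The paper proceeds differently: rather than restricting to $\sigma\in I_K$, it works with all $\sigma\in\Gamma_K$ from the outset, asserting that for $x\in\ker(\varpi_M)$ the element $(\sigma-1)x$ already lies in $\Tp(\sG^0)$. Combined with $\ker(\varpi_G)\cap\Tp(\sG^0)=0$ this yields $(\sigma-1)x=0$ for every $\sigma$, hence $x$ is $\Gamma_K$-invariant; the paper then runs the same d\'evissage once more for the sequence $0\to\Tp(\sG^0)\to\Tp(G)\to\Tp(G)/\Tp(\sG^0)\to 0$ to push $x$ into $\Tp(\sG^0)\cap\ker(\varpi_G)=0$. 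Your argument only secures $(\sigma-1)x\in\Tp(\sG^0)$ for $\sigma$ in inertia; to match the paper you would need this containment for arbitrary $\sigma\in\Gamma_K$, which is exactly what you left unproved.
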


\begin{proof}
The canonical exact sequence
\[
0 \to \Tp(G) \to \Tp(M) \xrightarrow{f} \Tp(L) \to 0
\]
yields the long exact sequence in Galois cohomology:
\[
0 \to \Tp(G)^{\Gamma_K} \to \Tp(M)^{\Gamma_K} \to \Tp(L)^{\Gamma_K} \xrightarrow{\delta} H^1(K, \Tp(G)).
\]
Since $\Tp(G)^{\Gamma_K} = \Tp(M)^{\Gamma_K} = 0$, and $L[p^\infty]$ is étale, we deduce that $\Tp(L)^{\Gamma_K} = \Tp(L)$ and that the connecting map $\delta \colon \Tp(L) \to H^1(K, \Tp(G))$ is injective.

Let $x \in \Tp(M)$ with $\varpi_M(x) = 0$. Then for every $\sigma \in \Gamma_K$, we have $\varpi_M((\sigma - 1)x) = 0$. However, $(\sigma-1)x$ lies in the Tate module $\Tp(\sG^0)$, where $\sG^0$ is the connected component of $G[p^{\infty}]$ which is the same as the connected component of $M[p^{\infty}]$. Therefore, $(\sigma-1)x\in\ker(\varpi_G)\cap\Tp(\sG^0)$ for all $\sigma\in\Gamma_K$ and the assumption implies $(\sigma - 1)x = 0$. As $\sigma(x)=x$ for all $\sigma\in\Gamma_K$ and $f$ is $\Gamma_K$-equivariant, we can conclude that $\delta(f(x))=0$ in $H^1(K,\Tp(G))$. Since $\delta$ is injective, it follows that $f(x)=0$, which implies that $x\in\Tp(G)\cap\ker(\varpi_M)=\ker(\varpi_G)$.

Consider the exact sequence
    \begin{equation}\label{3.3.10}
    0\to \Tp(\sG^0)\to \Tp(G)\to \Tp(G)/\Tp(\sG^0)\to 0.
    \end{equation}
    The $\Z_p$-module $\Tp(G)/\Tp(\sG^0)$ is the Tate-module of the \'etale part of $G[p^{\infty}]$. If we repeat the same approach as above for the exact sequence \ref{3.3.10}, we obtain that $x\in\Tp(\sG^0)\cap\ker(\varpi_G)=0$. This shows that $\varpi_M$ is injective.
\end{proof}

\subsection{Crystalline integration}
The $p$-adic Tate module $\Tp(M)$ of a 1-motive $M$ with good reduction arises from the Tate module of a $p$-divisible group defined over $\cO_K$. Since the Tate module of a $p$-divisible group over $\cO_K$ is known to be crystalline in the sense of Fontaine, it follows that $\Tp(M)$ is a crystalline Galois representation. As a result, one obtains a canonical filtered isomorphism
\[
\Tp(M)\otimes_{\Z_p}\Bcrisp \cong \Tcrys(\bar{M})\otimes_{\W(k)}\Bcrisp,
\]
where the filtration on the right-hand side arises from the filtered isocrystal induced by the Hodge filtration, while the filtration on the left is induced by that of $\Bcrisp$ (see \cite{fontaine1994semi} for details). Here, $\Bcrisp$ is the positive crystalline period ring, equipped with a $\Gamma_K$-equivariant Frobenius $\phi_{\mathrm{cris}}$, naturally extended from the Frobenius automorphism on infinitesimal period ring $\Ainf$ (see \cite{fontaine1994corps} or \cite{colmez_construction_2000}). This identification gives rise to what we call the \emph{crystalline integration pairing}
\begin{equation}\label{crystalline integration map}
\int^{\mathrm{cris}} \colon \Tp(M)\times \Tcrysv(\bar{M})_K \to \Bcrisp.
\end{equation}
%
%Moreover, this pairing factors through the $p$-adic integration pairing $\int^{\varpi}$ via the crystalline--de~Rham comparison isomorphism
%\[
%\TdRv(M)_K \cong \Tcrysv(\bar{M})_K,
%\]
%as indicated in \cite{colmez_periodes_1993}.
%
To further analyze this pairing, let $\sG$ denote the $p$-divisible group associated to $M$, and consider the associated connected component $\sG^0$. We define:
\begin{align}\label{tilde Tp and Tcrys}
\tilde{\Tp}(M) &:= \Tp(\sG^0), \\
D &:= \D(\sG^0),
\end{align}
where $D$ is the Dieudonné crystal associated to $\sG^0$. Note that $D$ corresponds to the submodule of $\Tcrysv(\bar{M})$ supported on non-zero slopes.

\begin{prop}\label{crystalline pairing and Frobenius}
For all $x \in \tilde{\Tp}(M)$, $\omega \in D$, and $n \geq 0$, we have
\[
\int^{\mathrm{cris}}_x F^n\omega = \phi_{\mathrm{cris}}^n\left(\int^{\mathrm{cris}}_x \omega\right),
\]
where $\phi_{\mathrm{cris}}$ denotes the Frobenius endomorphism on $\Bcrisp$.
\end{prop}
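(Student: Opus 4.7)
The strategy is to derive the identity from the Frobenius-equivariance of the crystalline comparison isomorphism
\[
\alpha\colon \Tp(M)\otimes_{\Z_p}\Bcrisp \xrightarrow{\sim}\Tcrys(\bar M)\otimes_{\W(k)}\Bcrisp,
\]
which intertwines $1\otimes\phi_{\mathrm{cris}}$ on the source with $F\otimes\phi_{\mathrm{cris}}$ on the target. Consequently, for every $x\in\Tp(M)$ the element $\alpha(x\otimes 1)$ is fixed by $F\otimes\phi_{\mathrm{cris}}$; writing $\alpha(x\otimes 1)=\sum_i e_i\otimes b_i$ with $e_i\in\Tcrys(\bar M)$ and $b_i\in\Bcrisp$, we obtain
\[
\sum_i e_i\otimes b_i \;=\; \sum_i F^n e_i\otimes \phi_{\mathrm{cris}}^n(b_i) \qquad (n\geq 0).
\]

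First, I would make the pairing $\int^{\mathrm{cris}}$ concrete via Dieudonn\'e duality for $p$-divisible groups. There is a canonical perfect $\W(k)$-bilinear pairing between $\Tcrys(\bar M)$ and $\Tcrysv(\bar M)$ (carrying an appropriate Tate twist) which, extended $\Bcrisp$-linearly in the first slot, yields $\int^{\mathrm{cris}}_x\omega=\bigl\langle\alpha(x\otimes 1),\omega\bigr\rangle$. This pairing is itself Frobenius-equivariant, in the sense that $F$ acting on one slot can be transferred to either $F$ or $V$ on the other, modulo a $\sigma$-twist on the scalar output and a possible factor of $p$ from the Tate twist.

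Second, I would compute
\[
\int^{\mathrm{cris}}_x F^n\omega \;=\; \sum_i\bigl\langle e_i, F^n\omega\bigr\rangle\, b_i,
\]
and invoke the Frobenius-compatibility of the Dieudonn\'e pairing together with the displayed $\phi$-invariance above to rewrite $\langle e_i,F^n\omega\rangle$ as $\sigma^n\langle e_i,\omega\rangle=\phi_{\mathrm{cris}}^n\langle e_i,\omega\rangle$ (the second equality because $\phi_{\mathrm{cris}}$ restricts to $\sigma$ on $\W(k)\subset\Bcrisp$). Assembling the pieces and applying $\phi_{\mathrm{cris}}^n$ linearly yields
\[
\int^{\mathrm{cris}}_x F^n\omega \;=\; \phi_{\mathrm{cris}}^n\!\Bigl(\sum_i\langle e_i,\omega\rangle\, b_i\Bigr)\;=\;\phi_{\mathrm{cris}}^n\int^{\mathrm{cris}}_x\omega.
\]

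The principal technical obstacle is the bookkeeping of Dieudonn\'e duality: the pairing carries a Tate twist, and the precise interplay between $F$ on one slot and $F$ or $V$ on the other involves factors of $p$. The restriction to $x\in\tilde{\Tp}(M)=\Tp(\sG^0)$ and $\omega\in D=\D(\sG^0)$ confines the argument to the positive-slope part of $M[p^\infty]$, where $V$ is topologically nilpotent on $D$ and the Frobenius compatibility cleanly transfers $F$ to $\phi_{\mathrm{cris}}$ on the output without the stray $p$-factors that the slope-zero \'etale part would otherwise introduce.
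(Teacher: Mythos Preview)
The paper gives no self-contained argument; it simply invokes \cite[Proposition~3.1]{colmez1992periodes}. Your strategy—deducing the identity from the Frobenius-equivariance of the crystalline comparison isomorphism—is the right idea and is precisely what underlies Colmez's result, so in spirit you are reproducing what the citation contains.

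That said, your execution has a genuine gap at the key step. After using the $(F\otimes\phi_{\mathrm{cris}})$-invariance of $\alpha(x\otimes1)$, the identity you need is $\langle F^{n}e,F^{n}\omega\rangle=\sigma^{n}\langle e,\omega\rangle$ for the duality pairing between $\Tcrys(\bar M)$ and $\Tcrysv(\bar M)$. This is \emph{not} the standard compatibility: for Cartier-dual Dieudonn\'e modules the relation is $\langle Fe,F\omega\rangle=p\,\sigma\langle e,\omega\rangle$, reflecting the Tate twist in $\D(\sG)\otimes\D(\sG^\vee)\to\D(\mu_{p^\infty})$. Your final paragraph correctly flags this as ``the principal technical obstacle'' but does not resolve it; the assertion that restricting to the positive-slope part makes the stray $p$-factors disappear is not justified, and in fact that is not how the issue is handled.

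The repair is a matter of normalizations. Colmez's original argument works directly with the period morphism of the formal group $\sG^{0}$ (via its universal cover and the logarithm into $\Bcrisp$), where the crystalline Frobenius acts literally as $\phi_{\mathrm{cris}}$ on the target and the identity is immediate. If you insist on running the argument through the comparison isomorphism and the Dieudonn\'e pairing, you must keep the Tate twist explicit and check that the extra factor of $p$ is exactly cancelled by $\phi_{\mathrm{cris}}(t)=pt$ in the twist; this can be done, but it requires the bookkeeping you have left as a sketch.
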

\begin{proof}
This is a direct consequence of \cite[Proposition~3.1]{colmez1992periodes}.
\end{proof}

We next establish a key comparison between Galois-invariants over $\Bt$ and $\BdRp$.

\begin{lemma}\label{lemma 4.3.1}
Let $V := \Vp(M)$. Then:
\[
(V \otimes \Bt)^{\Gamma_K} \cong (V \otimes \BdRp)^{\Gamma_K}, \quad \text{and} \quad (V^\vee \otimes \Bt)^{\Gamma_K} \cong (V^\vee \otimes \BdRp)^{\Gamma_K} \cong \DdR(V^\vee).
\]
\end{lemma}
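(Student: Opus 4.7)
The plan is to deduce all three identifications from the Hodge--Tate property of $V := \Vp(M)$ combined with Tate's vanishing theorem. By \cref{Hodge-Tate weights of tate module of 1-motives}, the representation $V$ is Hodge--Tate with weights in $\{0,1\}$, so
\[
V \otimes_{\Q_p} \C_p \cong \C_p^{a} \oplus \C_p(1)^{b},
\qquad
V^\vee \otimes_{\Q_p} \C_p \cong \C_p^{a} \oplus \C_p(-1)^{b},
\]
where $a = \rank(L)+\dim(A)$ and $b = \dim(T)+\dim(A)$. Coupled with the classical vanishing $H^0(\Gamma_K,\C_p(n)) = H^1(\Gamma_K,\C_p(n)) = 0$ for $n \neq 0$, this yields the key statements
\[
(V \otimes \C_p(k))^{\Gamma_K} = H^1(\Gamma_K, V \otimes \C_p(k)) = 0 \quad \text{for all } k \geq 1,
\]
and $(V^\vee \otimes \C_p(k))^{\Gamma_K} = H^1(\Gamma_K, V^\vee \otimes \C_p(k)) = 0$ for all $k \geq 2$ as well as for all $k \leq -1$.

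To prove the first two isomorphisms, fix $W \in \{V, V^\vee\}$. For every $n \geq 2$ consider the short exact sequence
\[
0 \to W \otimes \bigl(t^2\BdRp/t^n\BdRp\bigr) \to W \otimes \BdRp/t^n\BdRp \to W \otimes \Bt \to 0.
\]
The left-hand term carries a filtration whose graded pieces are $W \otimes \C_p(k)$ for $k = 2, \dots, n-1$, and an inductive dévissage using the vanishing above gives $(W \otimes t^2\BdRp/t^n\BdRp)^{\Gamma_K} = H^1(\Gamma_K, W \otimes t^2\BdRp/t^n\BdRp) = 0$. The long exact sequence then collapses to a natural isomorphism $(W \otimes \BdRp/t^n\BdRp)^{\Gamma_K} \cong (W \otimes \Bt)^{\Gamma_K}$. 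Passing to the inverse limit over $n$, which commutes with $\Gamma_K$-invariants, yields $(W \otimes \BdRp)^{\Gamma_K} \cong (W \otimes \Bt)^{\Gamma_K}$, establishing both identifications with $\Bt$.

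For the identification $(V^\vee \otimes \BdRp)^{\Gamma_K} \cong \DdR(V^\vee)$, I would instead use the exact sequence $0 \to V^\vee \otimes \BdRp \to V^\vee \otimes \BdR \to V^\vee \otimes (\BdR/\BdRp) \to 0$. Writing $\BdR/\BdRp = \bigcup_{n \geq 1} t^{-n}\BdRp/\BdRp$ as a filtered union whose graded pieces are $\C_p(-k)$ for $k \geq 1$, the same dévissage shows that $(V^\vee \otimes t^{-n}\BdRp/\BdRp)^{\Gamma_K} = 0$ for every $n$. Since $\Gamma_K$-invariants commute with filtered unions, one obtains $(V^\vee \otimes \BdR/\BdRp)^{\Gamma_K} = 0$, and the long exact sequence produces $(V^\vee \otimes \BdRp)^{\Gamma_K} \cong (V^\vee \otimes \BdR)^{\Gamma_K} = \DdR(V^\vee)$. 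The main technical subtlety lies in the compatibility between continuous Galois cohomology and the inverse and direct limits used to present $\BdRp$ and $\BdR/\BdRp$; in our setup this is harmless because both $H^0$ and $H^1$ vanish at every finite stage of the dévissage, so Mittag--Leffler is trivially satisfied.
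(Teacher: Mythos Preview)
Your proof is correct and follows essentially the same strategy as the paper's: exploit the Hodge--Tate property of $V$ via \cref{Hodge-Tate weights of tate module of 1-motives}, invoke Tate's vanishing for $H^0$ and $H^1$ of $\C_p(n)$ with $n\neq 0$, and run a d\'evissage along the $t$-adic filtration of $\BdRp$ and of $\BdR/\BdRp$. The only cosmetic difference is that the paper peels off one graded piece $\C_p(n)$ at a time (e.g.\ showing $(V\otimes\BdRp/t^{n+1}\BdRp)^{\Gamma_K}\cong(V\otimes\BdRp/t^{n}\BdRp)^{\Gamma_K}$ step by step, and likewise $(V^\vee\otimes t^{-n}\BdRp)^{\Gamma_K}\cong(V^\vee\otimes t^{-n+1}\BdRp)^{\Gamma_K}$), whereas you bundle the kernel $t^2\BdRp/t^n\BdRp$ (respectively $\BdR/\BdRp$) into a single object and kill its invariants and $H^1$ by an internal d\'evissage; both arguments use exactly the same vanishing inputs and the same limit/colimit passage.
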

\begin{proof}
We first prove that the canonical surjection $\BdRp \twoheadrightarrow \Bt$ induces an isomorphism on Galois invariants:
\[
(V \otimes_{\Q_p} \BdRp)^{\Gamma_K} \cong (V \otimes_{\Q_p} \Bt)^{\Gamma_K}.
\]
As $t\BdRp/(t^2\BdRp)\cong\C_p(1)$, we have an exact sequence
\[
0\to\C_p(1)\to\BdRp/t^2\BdRp\to \BdRp/t\BdRp\to 0
\]
of $K$-Banach spaces which, after tensoring with $V$, yields an exact sequence of $\Gamma_K$-modules
\[
0 \to V \otimes_{\Q_p} \C_p(1) \to V \otimes_{\Q_p} (\BdRp / t^2\BdRp) \to V \otimes_{\Q_p} (\BdRp / t\BdRp) \to 0.
\]
Since the Hodge--Tate weights of $V$ are $0$ and $1$, the Galois cohomology of $V \otimes \C_p(1) \cong \C_p(1)^r \oplus \C_p(2)^m$ vanishes in all degrees (\cite[Theorem~2]{tate1967pdivisble}). Taking the long exact sequence of Galois cohomology yields an isomorphism of 
Galois invariants:
\[
(V \otimes \Bt)^{\Gamma_K} \cong (V \otimes (\BdRp / t\BdRp))^{\Gamma_K}.
\]
Repeating this process inductively shows that for all $n \geq 1$, we have
\[
(V \otimes (\BdRp / t^n\BdRp))^{\Gamma_K} \cong (V \otimes \Bt)^{\Gamma_K}.
\]
Passing to the limit then gives
\begin{equation}\label{4.3.14}
(V \otimes \BdRp)^{\Gamma_K} \cong (V \otimes \Bt)^{\Gamma_K}.
\end{equation}

A similar argument applies to $V^\vee$, using the exact sequence
\[
0 \to V^\vee \otimes \C_p(n) \to V^\vee \otimes (\BdRp / t^{n+1}\BdRp) \to V^\vee \otimes (\BdRp / t^n\BdRp) \to 0
\]
for $n \geq 2$, along with the vanishing of Galois cohomology for $V^\vee \otimes \C_p(n) \cong \C_p(n)^r \oplus \C_p(n-1)^m$. It follows that
\[
(V^\vee \otimes \Bt)^{\Gamma_K} \cong (V^\vee \otimes \BdRp)^{\Gamma_K}.
\]

To identify this with $\DdR(V^\vee)$, consider the exact sequence
\[
0 \to \BdRp \to t^{-1}\BdRp \to \C_p(-1) \to 0,
\]
from which we get the following exact sequence of $\Z[\Gamma_K]$-modules
\[
0\to V\ve\otimes_{\Q_p}\BdRp\to V\ve\otimes_{\Q_p}t^{-1}\BdRp\to V\ve\otimes_{\Q_p}\C_p(-1)\to 0.
\]
The Hodge-Tate weights of $\Vp(M)$ are $0$ and $1$, therefore, the Hodge-Tate weights of $V\ve$ are $0$ and $-1$, and so
\[
V\ve\otimes_{\Q_p}\C_p(-1)\cong\C_p(-1)^r\oplus\C_p(-2)^m
\]
for some positive integers $r$ and $m$. As the Galois cohomology of $\C_p(-1)^r\oplus\C_p(-2)^m$ vanishes in all degrees, the exact sequence gives an isomorphism 
\begin{equation*}
(V\ve\otimes_{\Q_p}\BdRp)^{\Gamma_K}\cong (V\ve\otimes_{\Q_p}t^{-1}\BdRp)^{\Gamma_K}.    
\end{equation*}

Induction over $n \geq 2$ shows
\[
(V^\vee \otimes t^{-n}\BdRp)^{\Gamma_K} \cong (V^\vee \otimes t^{-n+1}\BdRp)^{\Gamma_K}.
\]
Using $\BdR=\colim t^{-n}\BdRp$ and taking the colimit gives
\begin{equation}\label{4.3.13}
\DdR(V^\vee) \cong (V^\vee \otimes \BdRp)^{\Gamma_K}.
\end{equation}
\end{proof}

\begin{cor}\label{tate module of M is de Rham}
Let $M$ be a 1-motive over $K$ with good reduction. Then there is a canonical isomorphism of filtered $K$-vector spaces:
\[
\DdR(\Tp M) \cong \TdR(M),
\]
and the $p$-adic Galois representation $\Vp(M)$ is de Rham.
\end{cor}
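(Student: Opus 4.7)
The plan is to leverage the perfect Galois-equivariant pairing $\int^\varpi \colon \Vp(M) \times \TdRv(M) \to \Bt$ from \cref{theorem p-adic integration pairing for M}, together with \cref{lemma 4.3.1}, to realize $\TdR(M)$ as $\DdR(\Vp(M))$. Since $\TdRv(M)$ carries the trivial $\Gamma_K$-action, the adjoint of $\int^\varpi$ produces a $K$-linear map
\[
\rho \colon \TdRv(M) \longrightarrow (\Vp(M)\ve \otimes_{\Q_p} \Bt)^{\Gamma_K},
\]
and the non-degeneracy inherent in perfectness ensures $\rho$ is injective.

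Next, \cref{lemma 4.3.1} identifies the target with $\DdR(\Vp(M)\ve)$. To upgrade the resulting injection $\TdRv(M)\hookrightarrow \DdR(\Vp(M)\ve)$ to an isomorphism, I would compare dimensions: from the exact sequence $0 \to V(M) \to G^\natural \to G \to 0$ together with the canonical identification $V(M) \cong \coLie(G\ve)$, one obtains $\dim_K \TdR(M) = \rank(L) + \dim(T) + 2\dim(A)$, which agrees with $\dim_{\Q_p} \Vp(M)$ by \cref{Hodge-Tate weights of tate module of 1-motives}. Since the general inequality $\dim_K \DdR(V) \leq \dim_{\Q_p} V$ becomes an equality exactly when $V$ is de Rham, the injection $\rho$ forces equality throughout, so $\Vp(M)\ve$ is de Rham and $\rho$ is an isomorphism. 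As the class of de Rham representations is closed under duality, $\Vp(M)$ itself is de Rham, and the dual isomorphism reads $\DdR(\Vp(M)) \cong \DdR(\Vp(M)\ve)\ve \cong \TdR(M)$.

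For filtration compatibility, the statement in \cref{theorem p-adic integration pairing for M} that $\int^\varpi$ sends $\Fil^1 \TdRv(M)$ into $\Fil^1 \Bt = \C_p(1)$ translates into $\rho$ being a filtered map, with the filtration on $\DdR(\Vp(M)\ve)$ coming from the $t$-adic filtration on $\BdR$; dualizing yields the filtered isomorphism asserted in the corollary. The main conceptual hurdle will be the passage from a pairing valued in the non-field $\Bt$ to an injection into $\Gamma_K$-invariants of the expected dimension; this is exactly what \cref{lemma 4.3.1} supplies (note that $(\Vp(M) \otimes \Bt)^{\Gamma_K}$ alone would have strictly smaller dimension, so working with the dual side is essential). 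Once $\rho$ is in hand, the remainder is a clean dimension count combined with the standard duality formalism for de Rham representations.
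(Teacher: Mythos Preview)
Your argument is correct and follows essentially the same strategy as the paper (perfect pairing $+$ \cref{lemma 4.3.1} $+$ dimension count), but with an unnecessary detour through the dual. The paper works directly with $\Vp(M)$: since $\varpi_M\otimes\Bt$ is a $\Gamma_K$-equivariant isomorphism $\Vp(M)\otimes\Bt\xrightarrow{\sim}\TdR(M)\otimes\Bt$, taking invariants gives $(\Vp(M)\otimes\Bt)^{\Gamma_K}\cong\TdR(M)\otimes\Bt^{\Gamma_K}=\TdR(M)$; then the first part of \cref{lemma 4.3.1} and the inclusion $\BdRp\subset\BdR$ yield $\TdR(M)\hookrightarrow\DdR(\Vp(M))$, after which the same dimension count finishes. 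Your parenthetical claim that $(\Vp(M)\otimes\Bt)^{\Gamma_K}$ ``would have strictly smaller dimension'' is therefore mistaken---it already has the full dimension $\dim_{\Q_p}\Vp(M)$, as the perfect pairing shows---so working on the dual side is not essential, merely an alternative route that costs you the final dualization step.
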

\begin{proof}
From Lemma~\ref{lemma 4.3.1} and Theorem~\ref{theorem p-adic integration pairing for M}, we obtain:
\[
\TdR(M) \cong (\Vp(M) \otimes \Bt)^{\Gamma_K} \cong (\Vp(M) \otimes \BdRp)^{\Gamma_K} \hookrightarrow \DdR(\Vp(M)).
\]
Since $\dim_K \TdR(M_K) = \dim_{\Q_p} \Vp(M)$ and $\dim \DdR(\Vp(M)) \leq \dim \Vp(M)$, the above embedding must be an isomorphism.
\end{proof}

The following corollary provides a crucial link between the two integration pairings.

\begin{cor}\label{cor 4.3.2}
With the notation of \eqref{crystalline integration map}, for all $x \in \Vp(M)$ and $\omega \in \TdR^\vee(M)_K$, we have
\[
\int^{\mathrm{cris}}_x \omega = 0 \quad \Longleftrightarrow \quad \int^{\varpi}_x \omega = 0.
\]
\end{cor}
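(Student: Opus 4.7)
The strategy is to exhibit the two pairings as different reductions of the same crystalline comparison isomorphism, and then to use Hodge--Tate rigidity together with Frobenius equivariance to pass between pointwise vanishings.

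First, I would record a compatibility diagram. The crystalline comparison
\[
V_p(M)\otimes_{\Q_p}\Bcris \;\xrightarrow{\sim}\; \Tcrys(\bar M)\otimes_{K_0}\Bcris
\]
base-changes along \(\Bcris\hookrightarrow\BdR\) (equivalently \(\Bcrisp\hookrightarrow\BdRp\)) to the de Rham comparison, using the identification \(\Tcrys(\bar M)\otimes_{\W(k)}K\cong\TdR(M)\) from \cref{cor: Hodge filtration and tangent space on Dieudonne module}. Its reduction modulo \(t^2\) is, by construction, the pairing underlying \(\int^\varpi\), which yields a commutative diagram
\[
\begin{tikzcd}
V_p(M)\otimes\TdRv(M) \arrow[r,"\int^{\mathrm{cris}}"] \arrow[d, Rightarrow, no head] & \Bcrisp \arrow[d,"\pi"] \\
V_p(M)\otimes\TdRv(M) \arrow[r,"\int^\varpi"] & \Bt
\end{tikzcd}
\]
with \(\pi\colon\Bcrisp\hookrightarrow\BdRp\twoheadrightarrow\Bt\). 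The implication \(\int^{\mathrm{cris}}_x\omega=0\Rightarrow\int^\varpi_x\omega=0\) is then immediate.

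For the converse, assume \(\int^\varpi_x\omega=0\); then \(y:=\int^{\mathrm{cris}}_x\omega\) lies in \(\ker(\pi)=t^2\BdRp\cap\Bcrisp\). Using \(\Bcrisp=\Bcris\cap\BdRp\) inside \(\BdR\) and \(t\in\Bcrisp\), a direct check shows this intersection equals \(t^2\Bcrisp\), so \(y=t^2z\) for some \(z\in\Bcrisp\), and the task reduces to \(z=0\). For this I would combine two inputs: by \cref{Hodge-Tate weights of tate module of 1-motives} the Hodge--Tate weights of \(V_p(M)\) are \(0\) and \(1\), which constrains the period entries of the comparison modulo \(t^2\) to be determined by their \(\C_p\)- and \(\C_p(1)\)-components; and the Frobenius equivariance from \cref{crystalline pairing and Frobenius}, \(\phi_{\mathrm{cris}}^n(y)=\int^{\mathrm{cris}}_x F^n\omega\) for \(\omega\) supported on the connected Dieudonné module \(D=\D(\sG^0)\), combined with \(\phi(t)=pt\), converts the \(t^2\)-divisibility of \(y\) into increasing \(p\)-divisibility of \(\phi^n(y)/t^2\). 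Splitting \(\omega\) along the slope decomposition of \(\Tcrys(\bar M)\) into étale and connected parts, the étale piece has periods in the unramified subring of \(\Bcrisp\) on which \(\pi\) is visibly injective, while the connected piece is killed by iterating Frobenius and invoking the \(p\)-adic separatedness of \(\Acris\subset\Bcrisp\).

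The main obstacle is this last step: the constraint \(y\in t^2\Bcrisp\) is strictly stronger than what the Hodge-filtration compatibility of \cref{theorem p-adic integration pairing for M} alone enforces (that places only \(\Fil^1\)-periods in \(t\Bcrisp\)), so the argument must genuinely use Frobenius rigidity beyond the filtration. A cleaner alternative I would consider is a d\'evissage along the weight filtration \(0\subset T\subset G\subset M\), reducing to lattice, torus, and abelian 1-motives separately: the first two cases are direct from the explicit constructions of Section~\ref{section: p-adic integration}, the abelian case is essentially \cite[Theorem~5.2]{colmez1992periodes}, and extensions follow by the five lemma applied to the compatibility diagram above.
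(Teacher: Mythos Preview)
Your approach is genuinely different from the paper's, and the gaps you yourself flag are real. The paper does not touch Frobenius, slope decompositions, or d\'evissage at all. Instead it fixes $\omega$ and regards both $f\colon x\mapsto\int^{\varpi}_x\omega$ and $g\colon x\mapsto\int^{\mathrm{cris}}_x\omega$ as elements of the Galois-equivariant Hom-spaces $\Hom_{\Gamma_K}(\Vp(M),\Bt)\cong(\Vp(M)^\vee\otimes\Bt)^{\Gamma_K}$ and $\Hom_{\Gamma_K}(\Vp(M),\BdRp)\cong(\Vp(M)^\vee\otimes\BdRp)^{\Gamma_K}$ respectively. The entire weight is then carried by \cref{lemma 4.3.1}, which shows that the projection $\BdRp\to\Bt$ induces an \emph{isomorphism} between these two spaces (this is exactly where the Hodge--Tate weight constraint $\{0,1\}$ enters, via vanishing of $H^i(\Gamma_K,\C_p(n))$ for $n\neq 0$). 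Since $g$ maps to $f$ under this isomorphism, the paper concludes the equivalence directly.

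Your route, by contrast, tries to force $y=\int^{\mathrm{cris}}_x\omega\in t^2\Bcrisp$ to vanish by Frobenius iteration. There are concrete problems with this. First, \cref{crystalline pairing and Frobenius} is stated only for $x\in\tilde{\Tp}(M)$ and $\omega\in D=\D(\sG^0)$; for general $x\in\Vp(M)$ and $\omega\in\TdRv(M)$ there is no Frobenius equivariance to invoke. Your proposed fix---splitting $\omega$ along the slope decomposition---does not help, because the slope decomposition of $\Tcrysv(\bar M)$ does not correspond to a Galois-stable decomposition of $\Vp(M)$; the cross terms $\int^{\mathrm{cris}}_{x_{\mathrm{\acute et}}}\omega_{\mathrm{conn}}$ need not vanish and need not lie in the unramified subring. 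Second, even restricted to the connected part, the step ``$\phi^n(y)\in p^{2n}t^2\Bcrisp$ plus separatedness of $\Acris$ gives $z=0$'' is incomplete: you would need uniform control on the $p$-adic size of $\int^{\mathrm{cris}}_x F^n\omega$ as $n$ grows, and for positive-slope pieces $F^n\omega$ itself grows $p$-adically, so the divisibilities partially cancel rather than accumulate. Third, the d\'evissage alternative does not go through either: the pointwise vanishing equivalence is not an exactness-type statement, and for a non-split extension $0\to M'\to M\to M''\to 0$ you cannot express $\int_x\omega$ for general $(x,\omega)$ as a sum of periods of $M'$ and $M''$, so the five lemma has nothing to grip.

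The moral is that the Hodge--Tate constraint you correctly identify as the crux is most efficiently packaged not through explicit period estimates but through the Galois-cohomological statement of \cref{lemma 4.3.1}.
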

\begin{proof}
Since $\Bcrisp \subset \BdRp$ and their filtrations are compatible, let $\omega' \in \Tcrysv(\bar{M})_K$ correspond to $\omega$ under the comparison isomorphism $\TdRv(M)_K\cong\Tcrysv(\bar{M})_K$. Consider the maps:
\[
f(x) := \int^{\varpi}_x \omega \in \Bt, \quad
g(x) := \int^{\mathrm{cris}}_x \omega' \in \Bcris \hookrightarrow \BdRp.
\]
Then
\[
f \in \Hom_{\Z_p[\Gamma_K]}(\Vp(M), \Bt) \cong (\Vp^\vee(M) \otimes \Bt)^{\Gamma_K}, \quad
g \in \Hom_{\Z_p[\Gamma_K]}(\Vp(M), \BdRp) \cong (\Vp^\vee(M) \otimes \BdRp)^{\Gamma_K}.
\]
By Lemma~\ref{lemma 4.3.1}, these spaces are naturally isomorphic. Thus, $f(x) = 0$ if and only if $g(x) = 0$.
\end{proof}

%%%%%%%%%%%%%%%%%%%%%%%%%%%%%%%%%%%%%%%%%%%%%%%%%%%%%%
%%
%% Section 4. P-adic periods for 1-motives
%%
%%%%%%%%%%%%%%%%%%%%%%%%%%%%%%%%%%%%%%%%%%%%%%%%%%%%%%%%

\section{P-adic peirods of 1-motives}

We develop a formalism that allows for a unified treatment of period pairings, encompassing both classical and $p$-adic settings. This framework provides a conceptual structure in which to study the linear relations among periods, including those arising from $p$-adic integration pairings. It also permits a precise formulation of period conjectures that quantify the extent to which all such relations are dictated by functoriality and bilinearity.

\subsection{Formalism of periods}
\begin{defn}[Realization category]
Let $K$ and $L$ be fields containing $\Q$ and $\bar{\Q}$, respectively, and let $B$ be an algebra over both $K$ and $L$. The realization category with coefficients in $B$, denoted $\Mod^{B}_{K,L}$, is defined as follows:
\begin{enumerate}
    \item[(i)] Objects are triples $(H_K,H_L,\varpi)$, where $H_K$ and $H_L$ are finite-dimensional vector spaces over $K$ and $L$, respectively, and $\varpi\colon H_K \otimes_K B \to H_L \otimes_L B$ is a $B$-linear isomorphism. This map encodes a comparison between two realizations via the base algebra $B$.
    \item[(ii)] Morphisms are pairs $\varphi := (\varphi_K, \varphi_L)$, where $\varphi_K\colon H_K \to H'_K$ is $K$-linear, $\varphi_L\colon H_L \to H'_L$ is $L$-linear, and the diagram
    \[
    \begin{tikzcd}
    H_K \otimes_K B \arrow[r, "\varpi"] \arrow[d, "\varphi_K \otimes 1_B"] & H_L \otimes_L B \arrow[d, "\varphi_L \otimes 1_B"] \\
    H'_K \otimes_K B \arrow[r, "\varpi'"] & H'_L \otimes_L B
    \end{tikzcd}
    \]
    commutes. This ensures compatibility of morphisms with the comparison isomorphisms.
\end{enumerate}
\end{defn}

The category $\Mod^{B}_{K,L}$ is a rigid abelian tensor category, equipped with the following structure:
\begin{itemize}
    \item \textbf{Tensor product:} Given objects $H = (H_K, H_L, \varpi)$ and $H' = (H'_K, H'_L, \varpi')$, the tensor product is defined by
    \[
    H \otimes H' = (H_K \otimes_K H'_K, H_L \otimes_L H'_L, \varpi \otimes \varpi').
    \]
    This defines a natural monoidal structure, compatible with the comparison morphisms.

    \item \textbf{Duals:} The dual of $H$ is $H^{\vee} = (\Hom(H_K, K), \Hom(H_L, L), \varpi^{\vee})$, where $\varpi^{\vee}(f) = f \circ \varpi^{-1}$ under the canonical identification $\Hom(H_K \otimes_K B, B) \cong \Hom(H_K, K) \otimes_K B$. This duality turns $\Mod^{B}_{K,L}$ into a rigid tensor category.

    \item \textbf{Unit object:} The unit object is $\mathbbm{1} = (K, L, \mathrm{id})$. This plays the role of the identity with respect to the tensor structure. It endows the category $\Mod^B_{K,L}$ with an internal $\ihom$ structure, and we have $H^{\vee}=\ihom(H,\mathbbm{1})$.
\end{itemize}

This category serves as the natural target for realization functors from categories of motives or similar structures, encoding comparison isomorphisms between two types of realizations via the third component $\varpi$.

\begin{defn}[Period pairing]\label{period pairing for T}
Let $\mathcal{C}$ be an additive category and $T: \mathcal{C} \to \Mod^{B}_{K,L}$ an additive functor, written as $X \mapsto (T_K(X), T_L(X), \varpi_X)$. A \emph{period pairing} for $T$ is a pair $\mathcal{H} = (F, G)$ consisting of:
\begin{enumerate}
    \item an additive covariant left exact functor $F: \mathcal{C} \to \Vect(\Q)$, which can be viewed as selecting a distinguished rational structure inside the $K$-realization;
    \item an additive contravariant left exact functor $G: \mathcal{C} \to \Vect(U)$, for some algebraic extension $U/\Q$, which analogously selects a distinguished $U$-structure in the $L$-realization;
    \item natural embeddings $F(X) \hookrightarrow T_K(X) \otimes_K B$ and $G(X) \hookrightarrow T_L(X)^\vee \otimes_L B$ for all $X \in \mathcal{C}$, reflecting their compatibility with the comparison isomorphisms $\varpi_X$.
\end{enumerate}
These embeddings are required to be functorial in $X$, ensuring the coherence of the pairing across morphisms. In other words, the diagrams
    \begin{equation*}
    \begin{tikzcd}
    F(X) \arrow[r, hook] \arrow[d, "F(f)"] & T_K(X)\otimes_K B \arrow[d, "T_{K}(f)\otimes B"] & &  G(X) \arrow[r, hook] \arrow[d, "G(f)"] & T_L\ve(X)\otimes_L B \arrow[d, "T\ve_{L}(f)\otimes B"] \\
    F(X') \arrow[r, hook]                    & T_{K}(X')\otimes_K B & &   G(X') \arrow[r, hook]                    & T_{L}\ve(X')\otimes_L B          
\end{tikzcd}
    \end{equation*}
    commute for any morphism $X\xrightarrow{f}X'$ in $\cC$.
\end{defn}

The purpose of such pairings is to isolate well-behaved subspaces of the realizations on which we can meaningfully define a bilinear pairing valued in $B$. These pairings serve as the abstract setting in which periods---viewed as numbers obtained by integrating one realization against another---can be defined and studied.

\begin{defn}[Periods and period space]\label{Space of H-periods}
Let $\mathcal{H} = (F, G)$ be a period pairing for $T: \mathcal{C} \to \Mod^B_{K,L}$.
\begin{enumerate}
    \item The $\mathcal{H}$-periods of an object $X \in \mathcal{C}$ are defined by
    \[ \mathcal{P}_\mathcal{H}(X) := \{ \gamma(\varpi_X(\nu)) \in B \mid \nu \in F(X), \gamma \in G(X) \}. \]
    This set consists of all scalar pairings between the embedded realizations of $F(X)$ and $G(X)$ under the comparison isomorphism. We set $\langle\nu,\gamma\rangle_\cH:=\gamma(\varpi_X(\nu))$.
    \item The $U$-vector space generated by $\mathcal{P}_\mathcal{H}(X)$ is denoted $\mathcal{P}_\mathcal{H}\langle X \rangle$. This space captures the linear span of periods associated to $X$.

    \item For a full additive subcategory $\mathcal{D} \subset \mathcal{C}$, we define the full set of periods arising from all objects in $\mathcal{D}$
    \[ \mathcal{P}_\mathcal{H}(\mathcal{D}) := \bigcup_{X \in \mathcal{D}} \mathcal{P}_\mathcal{H}(X). \]
\end{enumerate}
\end{defn}

This notion captures, in an abstract setting, the idea of numerical periods arising from pairings between two realizations of a given object. These periods are central objects of study in transcendence theory, arithmetic geometry, and $p$-adic Hodge theory.

\begin{remark}
\begin{enumerate}
    \item Suppose that $F(X)$ and $G(X)$ are finite-dimensional. The $\mathcal{H}$-period space $\mathcal{P}_\mathcal{H}\langle X \rangle$ is the $U$-vector space generated by the entries of all matrices $M_{S,S'}(X)$, where $S$ is a $\Q$-basis for $F(X)$ and $S'$ is a $U$-basis for $G(X)^\vee$. Each matrix $M_{S,S'}(X)$ encodes the coordinates of the elements of $S$ with respect to the dual basis $S'$ via the comparison isomorphism $\varpi_X$.

    \item The set $\mathcal{P}_\mathcal{H}(\mathcal{C})$ depends only on the images of the functors $F$ and $G$, not on their specific presentations.
    
    \item In the construction, as well as in all results, propositions, and lemmas of this section concerning the period pairing \( \cH = (F, G) \) and \( \cH \)-periods, the left exactness of \( F \) and \( G \) suffices. Nevertheless, as noted in \cref{example 4.1}, the classical periods of 1-motives fall within this formalism. In that case, \( F \) and \( G \) correspond to the singular and de Rham co-realization functors, respectively---both of which are exact.

    \item There are two fundamental types of relations among the $\mathcal{H}$-periods of an object $X \in \mathcal{C}$:
    \begin{itemize}
        \item \textbf{Bilinearity:} For all $a_1, a_2 \in \Q$, $b_1, b_2 \in U$, $\nu_1, \nu_2 \in F(X)$, and $\gamma_1, \gamma_2 \in G(X)$, we have
        \[
        \langle a_1\nu_1 + a_2\nu_2, b_1\gamma_1 + b_2\gamma_2 \rangle = a_1b_1\langle \nu_1, \gamma_1 \rangle + a_1b_2\langle \nu_1, \gamma_2 \rangle + a_2b_1\langle \nu_2, \gamma_1 \rangle + a_2b_2\langle \nu_2, \gamma_2 \rangle.
        \]

        \item \textbf{Functoriality:} For any morphism $f: X \to Y$ in $\mathcal{C}$, $\nu \in F(X)$, and $\gamma \in G(Y)$, we have
        \[
        \langle f_*\nu, \gamma \rangle = \langle \nu, f^*\gamma \rangle.
        \]
    \end{itemize}
    These natural relations motivate the construction of a formal period space that encapsulates precisely the relations expected from bilinearity and functoriality.
\end{enumerate}
\end{remark}

\begin{defn}[Space of formal $\mathcal{H}$-periods]\label{space of formal H-periods}
Let $\mathcal{H}=(F,G)$ be a period pairing for a functor $T: \mathcal{C} \to \Mod_{K,L}^B$. The space of formal $\mathcal{H}$-periods $\widetilde{\mathcal{P}}_\mathcal{H}(\mathcal{C})$ is defined as the quotient
\[
\widetilde{\mathcal{P}}_\mathcal{H}(\mathcal{C}) := \left(\bigoplus_{X \in \mathcal{C}} F(X) \otimes_{\Q} G(X)\right) \Big/ \text{(functoriality relations)}.
\]
The $\mathcal{H}$-period map is the evaluation map
\[
\mathrm{eval}_\cH: \widetilde{\mathcal{P}}_\mathcal{H}(\mathcal{C}) \to \mathcal{P}_\mathcal{H}(\mathcal{C})
\]
induced by $\nu \otimes \gamma \mapsto \gamma(\varpi(\nu))$ for $(\nu, \gamma) \in F(X) \times G(X)$. We say that all relations among $\mathcal{H}$-periods are induced by bilinearity and functoriality if $\mathrm{eval}_\cH$ is injective. Note that $\mathcal{P}_\mathcal{H}(\mathcal{C})$ is a subspace of $B$, and $\mathrm{eval}_\cH$ is always surjective.
\end{defn}

\begin{defn}\label{space of formal periods at depth i}
Let $\mathcal{H}=(F,G)$ be a period pairing as above. For $X \in \mathcal{C}$ and integer $i \geq 1$, define the space of formal $\mathcal{H}$-periods of depth $i$ as the quotient
\[
\widetilde{\mathcal{P}}^i_\mathcal{H}(X) := \frac{F(X) \otimes_{\Q} G(X)}{R_i(X)}
\]
where $R_i(X)$ is the subspace generated by expressions $\sum_{j=1}^m \nu_j \otimes \gamma_j$ for all exact sequences
\[
0 \to X' \to X^m \to X'' \to 0
\]
with $\nu_j \in F(X')$, $\gamma_j \in G(X'')$, and $m \leq i$. Define
\[
\widetilde{\mathcal{P}}^{\infty}_\mathcal{H}(X) := \colim_i \widetilde{\mathcal{P}}^i_\mathcal{H}(X), \quad \widetilde{\mathcal{P}}^i_\mathcal{H}(\mathcal{C}) := \colim_{X \in \mathcal{C}} \widetilde{\mathcal{P}}^i_\mathcal{H}(X).
\]
\end{defn}
\begin{prop}\label{prop 4.1}
 Let $(\cC, T, \cH)$ be as above. Assume that $\cC$ is an abelian category.Then
 $$\cP_\cH\langle X\rangle=\cP_\cH(\langle X\rangle),$$ where $\langle X\rangle$ is the full abelian subcategory of $\cC$ generated by $X$.
\end{prop}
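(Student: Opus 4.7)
Since $X$ itself lies in $\langle X\rangle$, the inclusion $\cP_\cH(X)\subseteq \cP_\cH(\langle X\rangle)$, and hence (interpreting the right-hand side as its $U$-span in $B$) $\cP_\cH\langle X\rangle \subseteq \cP_\cH(\langle X\rangle)$, is immediate. The content of the proposition is the reverse inclusion: for every $Y\in\langle X\rangle$, each value $\langle\nu,\gamma\rangle_Y\in\cP_\cH(Y)$ must lie in the $U$-span of $\cP_\cH(X)$.

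The plan is to exploit that, since $\cC$ is abelian, $\langle X\rangle$ coincides with the full subcategory of subquotients of the direct sums $X^n$. Thus any $Y\in\langle X\rangle$ can be presented as $Y\cong Y_2/Y_1$ with $Y_1\subseteq Y_2\subseteq X^n$, giving $\iota\colon Y_2\hookrightarrow X^n$ and $\pi\colon Y_2\twoheadrightarrow Y$. The strategy reduces an arbitrary period of $Y$ to a period of $X^n$, after which additivity of $F$ and $G$---the natural isomorphisms $F(X^n)\cong F(X)^n$ and $G(X^n)\cong G(X)^n$---expresses it as a $\Q$-linear combination of periods of $X$, hence an element of $\cP_\cH\langle X\rangle$.

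For the reduction from $Y$ to $Y_2$, left exactness of the contravariant functor $G$ applied to $0\to Y_1\to Y_2\to Y\to 0$ gives an injection $G(\pi)\colon G(Y)\hookrightarrow G(Y_2)$, so we may view $\gamma':=G(\pi)(\gamma)$ inside $G(Y_2)$; choosing a lift $\tilde\nu\in F(Y_2)$ of $\nu$ under $F(\pi)$ and applying the functoriality identity $\langle F(\pi)\tilde\nu,\gamma\rangle_Y = \langle\tilde\nu, G(\pi)\gamma\rangle_{Y_2}$ yields $\langle\nu,\gamma\rangle_Y = \langle\tilde\nu,\gamma'\rangle_{Y_2}$. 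For the reduction from $Y_2$ to $X^n$, left exactness of $F$ applied to $Y_2\hookrightarrow X^n$ gives $F(\iota)\colon F(Y_2)\hookrightarrow F(X^n)$; choosing a lift $\tilde\gamma\in G(X^n)$ of $\gamma'$ under $G(\iota)$ and applying functoriality once more gives $\langle\tilde\nu,\gamma'\rangle_{Y_2} = \langle F(\iota)\tilde\nu,\tilde\gamma\rangle_{X^n}$, a period of $X^n$.

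The main obstacle is the existence of the two lifts: left exactness alone provides the injections $F(Y_2)\hookrightarrow F(X^n)$ and $G(Y)\hookrightarrow G(Y_2)$ but not the surjectivity of $F(\pi)$ or of $G(\iota)$ that the argument invokes. This is resolved by observing that in all settings of interest---the classical singular and de Rham co-realizations of 1-motives and the $p$-adic realizations of Section~3---$F$ and $G$ are in fact exact, so both lifts exist; alternatively, one may work directly with the embeddings $F(Y)\hookrightarrow T_K(Y)\otimes_K B$ and $G(Y)\hookrightarrow T_L\ve(Y)\otimes_L B$ and trace $\gamma(\varpi_Y(\nu))$ through the naturality squares of $\varpi$ attached to $\iota$ and $\pi$, using that the ambiguity in the choice of lifts lies in $F(Y_1)$ or $\ker G(\iota)$ and so annihilates against the relevant pairing partners by the functoriality relations.
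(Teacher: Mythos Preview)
Your approach is essentially the same as the paper's: both reduce to the fact that every object of $\langle X\rangle$ is a subquotient of some $X^n$, then use functoriality of the pairing to transport a period of a subquotient to a period of $X^n$, and finally use additivity of $F$ and $G$ to land in the $U$-span of $\cP_\cH(X)$. The paper phrases the middle step as the inclusion $\cP_\cH\langle X_1\rangle + \cP_\cH\langle X_2\rangle \subseteq \cP_\cH\langle X\rangle$ for any short exact sequence $0\to X_1\to X\to X_2\to 0$, which is exactly the content of your two reduction steps combined.

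You are in fact more careful than the paper on one point. You correctly observe that the lifts $\tilde\nu\in F(Y_2)$ of $\nu$ along $F(\pi)$ and $\tilde\gamma\in G(X^n)$ of $\gamma'$ along $G(\iota)$ require surjectivity of $F(\pi)$ and $G(\iota)$, which left exactness alone does not provide. The paper's proof silently uses the same lifts (to show $\cP_\cH\langle X_2\rangle\subseteq\cP_\cH\langle X\rangle$ one needs a preimage in $F(X)$ of $\nu_2\in F(X_2)$, and dually for $X_1$), and the paper's Remark asserting that left exactness suffices is not justified by the argument as written. Your resolution---that in all the examples of interest $F$ and $G$ are genuinely exact---is the right one, and is what the paper is implicitly relying on. Your alternative suggestion of tracing through the naturality squares does not by itself circumvent the need for lifts; it only shows that the resulting period is independent of the choice of lift once one exists.
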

\begin{proof}
If $\alpha_1$ is a $\cH$-period of $X_1$ and $\alpha_2$ is a $\cH$-period of $X_2$ in $\cC$, then $\alpha_1+\alpha_2$ is a $\cH$-period of $X_1\oplus X_2$, so $\cP_\cH(\langle X\rangle)$ is closed under the addition and scalar multiplication. If $0\to X_1\to X\to X_2\to 0$ is an exact sequence in $\cC$, we can conclude that $\cP_\cH\langle X_1\rangle +\cP_\cH\langle X_2\rangle\subseteq \cP_\cH\langle X\rangle$. This implies that for any subquotient $Y$ of $X$, $\cP_\cH(Y)\subseteq\cP_\cH\langle X\rangle$. As we showed above, we have $\cP_\cH(Y^n)\subseteq\cP_\cH\langle Y\rangle$. Since all objects in $\langle X\rangle$ are subquotients of some $X^n$ for some $n$, it follows that $\cP_\cH(\langle X\rangle)=\cP_\cH\langle X\rangle$.
\end{proof}

\begin{remark}
For a given short exact sequence $$0\to X_1\xrightarrow{i} X^n\xrightarrow{p}X_2\to 0$$ in $\cC$, let $\sum^n_{i=1}\nu_i\otimes\gamma_i\in F(X)\otimes_{\Q}G(X)$ with \[(\nu_1,\dots,\nu_n)\in i_*(F(X_1)), (\gamma_1,\dots,\gamma_n)\in p^*(G(X_2)),\]
that is, $\nu_i=i_*\nu'_i$, $\gamma_i=p^*\gamma'_i$, for some $\nu'_i\in F(X_1)$ and, $\gamma'_i\in G(X_2)$. Then, we have
$$\sum^n_{i=1}\nu_i\otimes\gamma_i=\sum^n_{i=1}i_*\nu'_i\otimes\gamma_i=\sum^n_{i=1}\nu'_i\otimes i^*p^*\gamma'_i=0.$$

Any such exact sequence as above gives rise to a non-trivial vanishing $\cH$-period, which motivates the following definition.
\end{remark}

\begin{defn}
We say the $\mathcal{H}$-period conjecture holds at depth $i$ for $\mathcal{C}$ if the evaluation map
\[
\widetilde{\mathcal{P}}^i_\mathcal{H}(\mathcal{C}) \to \mathcal{P}_\mathcal{H}(\mathcal{C})
\]
is injective.
\end{defn}

\begin{lemma}\label{colim injective period conjecture}
Let $\mathcal{H}$ be a period pairing for $T: \mathcal{C} \to \Mod^B_{K,L}$, with $\mathcal{C}$ abelian. For each object $X \in \mathcal{C}$, let $\langle X \rangle$ denote the full abelian subcategory generated by $X$.
\begin{enumerate}
    \item The evaluation map $\widetilde{\mathcal{P}}_\mathcal{H}(\mathcal{C}) \to \mathcal{P}_\mathcal{H}(\mathcal{C})$ is bijective if and only if $\widetilde{\mathcal{P}}_\mathcal{H}(\langle X \rangle) \to \mathcal{P}_\mathcal{H}\langle X \rangle$ is bijective for every $X$.
    \item The $\mathcal{H}$-period conjecture holds at depth $i$ for $\mathcal{C}$ if and only if it holds for $\langle X \rangle$ for all $X$.
\end{enumerate}
\end{lemma}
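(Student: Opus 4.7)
The plan is to exploit one fundamental reduction: any finite sum $\alpha=\sum_j\nu_j\otimes\gamma_j$ with $\nu_j\in F(Y_j)$ and $\gamma_j\in G(Y_j)$, representing a class in $\widetilde{\mathcal{P}}_\mathcal{H}(\mathcal{C})$, can be rewritten as a single tensor in $F(X)\otimes_\Q G(X)$ for $X:=\bigoplus_j Y_j$. Using the split inclusions $\iota_j\colon Y_j\hookrightarrow X$ and projections $\pi_j\colon X\twoheadrightarrow Y_j$ with $\pi_j\iota_j=\mathrm{id}_{Y_j}$, functoriality applied to $\iota_j$ yields
\[
(\iota_j)_*\nu_j\otimes\pi_j^*\gamma_j\equiv\nu_j\otimes(\pi_j\iota_j)^*\gamma_j=\nu_j\otimes\gamma_j
\]
modulo functoriality relations. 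Since every $Y_j$ is a summand of $X$ and hence lies in $\langle X\rangle$, this rewriting is entirely internal to $\langle X\rangle$, so the class of $\alpha$ lies in the image of the canonical map $\phi_X\colon\widetilde{\mathcal{P}}_\mathcal{H}(\langle X\rangle)\to\widetilde{\mathcal{P}}_\mathcal{H}(\mathcal{C})$.

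First I would assemble the commutative square
\[
\begin{tikzcd}[column sep=large]
\widetilde{\mathcal{P}}_\mathcal{H}(\langle X\rangle) \arrow[r, "\phi_X"] \arrow[d, "\mathrm{eval}_{\langle X\rangle}"] & \widetilde{\mathcal{P}}_\mathcal{H}(\mathcal{C}) \arrow[d, "\mathrm{eval}_\mathcal{C}"] \\
\mathcal{P}_\mathcal{H}\langle X\rangle \arrow[r, hook] & \mathcal{P}_\mathcal{H}(\mathcal{C})
\end{tikzcd}
\]
whose bottom horizontal arrow is an inclusion by \cref{prop 4.1}. The implication $(\Leftarrow)$ of $(1)$ is then immediate: if $\mathrm{eval}_{\langle Y\rangle}$ is injective for every $Y\in\mathcal{C}$, then given $\alpha\in\ker\mathrm{eval}_\mathcal{C}$, lift $\alpha$ via the reduction to $\tilde\alpha\in\widetilde{\mathcal{P}}_\mathcal{H}(\langle X\rangle)$; since $\mathrm{eval}_{\langle X\rangle}(\tilde\alpha)=\mathrm{eval}_\mathcal{C}(\alpha)=0$ in $B$, the hypothesis forces $\tilde\alpha=0$, hence $\alpha=\phi_X(\tilde\alpha)=0$.

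For the converse $(\Rightarrow)$, the injectivity of the bottom arrow together with that of $\mathrm{eval}_\mathcal{C}$ identifies $\ker\mathrm{eval}_{\langle X\rangle}$ with $\ker\phi_X$, so it remains to show that $\phi_X$ is injective. This is the main obstacle of the argument. I would handle it as follows: given $\alpha\in\ker\phi_X$, a representative in $V_X:=\bigoplus_{Y\in\langle X\rangle}F(Y)\otimes G(Y)$ admits a finite presentation as $\sum_k r_k$ in $\bigoplus_{Z\in\mathcal{C}}F(Z)\otimes G(Z)$, where each $r_k=(f_k)_*\nu_k\otimes\gamma_k-\nu_k\otimes f_k^*\gamma_k$ is a functoriality relation in $\mathcal{C}$ with $f_k\colon A_k\to B_k$. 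Organize the auxiliary objects $A_k,B_k\notin\langle X\rangle$ into a single direct sum $X'$, set $X'':=X\oplus X'$, and use the path-composition identity
\[
(g_*\mu\otimes\delta-\mu\otimes g^*\delta)+(f_*\nu\otimes g^*\delta-\nu\otimes f^*g^*\delta)=(gf)_*\nu\otimes\delta-\nu\otimes(gf)^*\delta,\quad\mu=f_*\nu,
\]
to combine any consecutive pair of relations along a path $A\to B\to C$ (with $A,C$ summands of $X$ and $B$ a summand of $X'$) into the single relation for the composition $gf\colon A\to C$, which is a morphism of $\langle X\rangle$. Since the outside contributions of the $r_k$ must cancel in the supplementary object pieces (as $\sum_k r_k\in V_X$), one can organize these cancellations into such paths and collapse them iteratively, rewriting $\sum_k r_k$ as a sum of functoriality relations internal to $\langle X\rangle$; hence $\alpha\in W_X$ and $\alpha=0$ in $\widetilde{\mathcal{P}}_\mathcal{H}(\langle X\rangle)$.

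Part $(2)$ proceeds along the same blueprint, with the additional bookkeeping of depth. An exact sequence $0\to Y'\to Y_j^m\to Y''\to 0$ in $\langle Y_j\rangle$ induces the exact sequence $0\to Y'\to X^m\to Y''\oplus(X/Y_j)^m\to 0$ in $\langle X\rangle$ (using the splitting $X=Y_j\oplus X/Y_j$), of the same width $m\leq i$; thus depth-$i$ relations survive the direct-sum reduction. The colimit $\widetilde{\mathcal{P}}^i_\mathcal{H}(\mathcal{C})=\colim_X\widetilde{\mathcal{P}}^i_\mathcal{H}(X)$ is filtered by $\oplus$ with transition maps compatible with evaluation, so the $(\Leftarrow)$ argument runs verbatim. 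For $(\Rightarrow)$, the same path-composition collapse applies, and one verifies that the depth of the composed relation is bounded by the maximum depth of its constituents, so the conjecture at depth $i$ for $\langle X\rangle$ follows from that for $\mathcal{C}$.
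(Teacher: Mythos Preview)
Your overall architecture matches the paper's: both directions rest on the commutative square you wrote, and your $(\Leftarrow)$ argument is the paper's filtered-colimit identification $\widetilde{\mathcal{P}}_\mathcal{H}(\mathcal{C})=\colim_X\widetilde{\mathcal{P}}_\mathcal{H}(\langle X\rangle)$ made explicit (the paper observes that every morphism $f\colon X\to Y$ lies in $\langle X\oplus Y\rangle$, so the system is filtered and the colimit computes the global space). For $(\Rightarrow)$ the paper simply \emph{asserts} that $\phi_X$ is injective and proceeds; you correctly flag this as the substantive step and attempt a direct proof.

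That attempt has a gap. The path-composition identity you display applies only when two relations share a middle object with $\mu=f_*\nu$ exactly; but the cancellation of the $r_k$-contributions in the components $F(Z)\otimes G(Z)$ with $Z\notin\langle X\rangle$ is in general a \emph{linear} cancellation among many terms, not a sequence of pairwise matchings along paths $A\to B\to C$. Your sentence ``one can organize these cancellations into such paths and collapse them iteratively'' is precisely the point that requires proof, and nothing in your outline supplies it --- for instance, two relations $r_1,r_2$ coming from distinct morphisms $f_1,f_2\colon A\to B$ with the same outside objects cannot be collapsed by your identity at all. The paper sidesteps the issue by taking injectivity of $\phi_X$ for granted; if you want to actually establish it, the path-collapse heuristic is not the right mechanism --- one needs either a coend-style description of $\widetilde{\mathcal{P}}_\mathcal{H}$ or the H\"ormann-type results the paper invokes elsewhere.
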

\begin{proof}
We prove (1); the proof of (2) is similar. The natural map
\[
\widetilde{\mathcal{P}}_\mathcal{H}(\langle X \rangle) \to \widetilde{\mathcal{P}}_\mathcal{H}(\mathcal{C})
\]
is injective for each $X$, hence if $\widetilde{\mathcal{P}}_\mathcal{H}(\mathcal{C}) \to \mathcal{P}_\mathcal{H}(\mathcal{C})$ is injective, then so is the composition
\[
\widetilde{\mathcal{P}}_\mathcal{H}(\langle X \rangle) \to \widetilde{\mathcal{P}}_\mathcal{H}(\mathcal{C}) \to \mathcal{P}_\mathcal{H}(\mathcal{C})
\]
for each $X \in \mathcal{C}$.

Conversely, since any morphism $f: X \to Y$ in $\mathcal{C}$ lies in the full abelian subcategory $\langle X \oplus Y \rangle$, we have
\[
\mathcal{C} = \bigcup_{X \in \mathcal{C}} \langle X \rangle.
\]
As a result,
\[
\widetilde{\mathcal{P}}_\mathcal{H}(\mathcal{C}) = \colim_{X \in \mathcal{C}} \widetilde{\mathcal{P}}_\mathcal{H}(\langle X \rangle).
\]
By assumption, the evaluation map is injective on each $\langle X \rangle$, and colimits preserve injectivity in this setting, so the global evaluation map is injective.

The surjectivity is straightforward.
\end{proof}
\begin{cor}\label{cor: P1=P for period conjecture}
Let $X \in \mathcal{C}$. Then
\[
\mathcal{P}_\mathcal{H}\langle X \rangle \cong \widetilde{\mathcal{P}}^1_\mathcal{H}(X) = \widetilde{\mathcal{P}}^2_\mathcal{H}(X) = \dots = \widetilde{\mathcal{P}}^{\infty}_\mathcal{H}(X)
\]
if and only if the evaluation map $\widetilde{\mathcal{P}}_\mathcal{H}(X) \to \mathcal{P}_\mathcal{H}(\langle X \rangle)$ is injective. In particular, all relations among $\mathcal{H}$-periods of $X$ arise from bilinearity and functoriality if and only if the period conjecture holds at depth 1.
\end{cor}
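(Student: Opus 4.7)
My plan is to set up the canonical chain of surjections
\[
\widetilde{\cP}^1_\cH(X) \twoheadrightarrow \widetilde{\cP}^2_\cH(X) \twoheadrightarrow \cdots \twoheadrightarrow \widetilde{\cP}^{\infty}_\cH(X) \twoheadrightarrow \cP_\cH\langle X\rangle
\]
and then analyze when it collapses to a chain of isomorphisms. The first string of surjections is automatic from the nested inclusions $R_1(X)\subseteq R_2(X)\subseteq\cdots\subseteq R_\infty(X)$ of subspaces of $F(X)\otimes_\Q G(X)$, while the final surjection uses the observation from the remark preceding the statement: every generator $\sum_j\nu_j\otimes\gamma_j$ of $R_i(X)$, coming from an exact sequence $0\to X'\to X^m\to X''\to 0$, lies in the kernel of the evaluation map $F(X)\otimes G(X)\to\cP_\cH\langle X\rangle$ by bilinearity and functoriality of the pairing together with the vanishing of the composite $X'\to X^m\to X''$.

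Next, I would observe that statement $A$ is, by an elementary fact about chains of surjections of vector spaces, equivalent to injectivity of the composition $\widetilde{\cP}^1_\cH(X)\to\cP_\cH\langle X\rangle$: a chain of surjections consists entirely of bijections if and only if the total composition is injective. Thus $A$ is equivalent to the single statement that the depth-$1$ evaluation map is an isomorphism, in which case the entire chain collapses.

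I would then identify $\widetilde{\cP}_\cH(X)$ with the colimit $\widetilde{\cP}^\infty_\cH(X)=\colim_i\widetilde{\cP}^i_\cH(X)$, so that the evaluation $\widetilde{\cP}_\cH(X)\to\cP_\cH(\langle X\rangle)$ becomes the final surjection in our chain (noting that $\cP_\cH(\langle X\rangle)=\cP_\cH\langle X\rangle$ by \cref{prop 4.1}). The direction $A\Rightarrow B$ is then immediate, since $A$ forces every surjection in the chain, including the final one computing $B$, to be bijective. For the reverse direction $B\Rightarrow A$, injectivity of the tail map $\widetilde{\cP}^\infty_\cH(X)\to\cP_\cH\langle X\rangle$ identifies the two, and since every $\widetilde{\cP}^i_\cH(X)$ surjects onto this common target through the chain, each intermediate surjection must already be injective, yielding $A$. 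The ``in particular'' clause is then a direct translation: by \cref{space of formal H-periods}, injectivity of $\widetilde{\cP}_\cH(X)\to\cP_\cH\langle X\rangle$ is precisely the assertion that all relations among $\cH$-periods of $X$ arise from bilinearity and functoriality, while $\widetilde{\cP}^1_\cH(X)\cong\cP_\cH\langle X\rangle$ is by definition the depth-$1$ period conjecture.

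The main obstacle is the identification $\widetilde{\cP}_\cH(X)\cong\widetilde{\cP}^\infty_\cH(X)$ used above: one must verify that every functoriality relation in the direct sum $\bigoplus_{Y\in\langle X\rangle}F(Y)\otimes G(Y)$ whose restriction to the $F(X)\otimes G(X)$ summand is nonzero can be exhibited as an element of some $R_i(X)$. This reduction exploits that every object of $\langle X\rangle$ is a subquotient of a suitable power $X^n$, so that morphisms in $\langle X\rangle$ assemble into exact sequences $0\to X'\to X^m\to X''\to 0$, and the corresponding functoriality relations translate into the generators of $R_i(X)$ via the identification $F(X^m)\cong F(X)^m$ and $G(X^m)\cong G(X)^m$, exactly as in the vanishing computation of the preceding remark.
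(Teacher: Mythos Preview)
Your argument for the direction $B\Rightarrow A$ contains a genuine error. You write that ``injectivity of the tail map $\widetilde{\cP}^\infty_\cH(X)\to\cP_\cH\langle X\rangle$ identifies the two, and since every $\widetilde{\cP}^i_\cH(X)$ surjects onto this common target through the chain, each intermediate surjection must already be injective.'' This is false as a statement about vector spaces: in a chain of surjections $V_1\twoheadrightarrow V_2\twoheadrightarrow\cdots\twoheadrightarrow V_\infty\twoheadrightarrow W$, bijectivity of the \emph{tail} map $V_\infty\to W$ says nothing about the earlier maps. (Take $V_1=\Q^2$, $V_2=V_\infty=W=\Q$.) Your earlier correct observation---that a chain of surjections consists of bijections iff the \emph{total} composition from $V_1$ is injective---applies to the map out of $\widetilde{\cP}^1_\cH(X)$, not to the map out of $\widetilde{\cP}^\infty_\cH(X)$.

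In terms of the relation subspaces: writing $K\subseteq F(X)\otimes G(X)$ for the kernel of evaluation, one always has $R_1(X)\subseteq R_2(X)\subseteq\cdots\subseteq R_\infty(X)\subseteq K$. Statement $B$ says $R_\infty(X)=K$; statement $A$ requires $R_1(X)=K$. Deducing the latter from the former amounts to proving $R_1(X)=R_\infty(X)$ under the hypothesis $R_\infty(X)=K$, and this is genuinely nontrivial---indeed, the paper later exhibits situations where $R_1\subsetneq R_2$. The paper's proof handles this implication by invoking \cite[Lemma~3.2 and Corollary~4.2]{hormann2021note}, which provides exactly the missing step. Your sketch of the identification $\widetilde{\cP}_\cH(X)\cong\widetilde{\cP}^\infty_\cH(X)$ is reasonable (and matches the paper's citation of \cite[Theorem~1.3]{hormann2021note}), but it does not supply the missing argument for $B\Rightarrow A$.
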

\begin{proof}
Following the same argument as \cite[Theorem 1.3]{hormann2021note}, we can show that
\[
\widetilde{\mathcal{P}}^\infty_\mathcal{H}(X) = \widetilde{\mathcal{P}}_\mathcal{H}(X).
\]
We also have successive quotients
\[
\widetilde{\mathcal{P}}^1_\mathcal{H}(X) \to \widetilde{\mathcal{P}}^2_\mathcal{H}(X) \to \dots \to \widetilde{\mathcal{P}}^\infty_\mathcal{H}(X).
\]
If the map $\widetilde{\mathcal{P}}^1_\mathcal{H}(X) \to \mathcal{P}_\mathcal{H}(\langle X \rangle)$ is injective, then so is
\[
\widetilde{\mathcal{P}}_\mathcal{H}(X) = \widetilde{\mathcal{P}}^\infty_\mathcal{H}(X) \to \mathcal{P}_\mathcal{H}(\langle X \rangle).
\]
It follows that all the maps in the above sequence are isomorphisms, and hence
\[
\mathcal{P}_\mathcal{H}(\langle X \rangle) \cong \widetilde{\mathcal{P}}^1_\mathcal{H}(X) = \widetilde{\mathcal{P}}^2_\mathcal{H}(X) = \dots = \widetilde{\mathcal{P}}_\mathcal{H}(X).
\]

Conversely, suppose that $\widetilde{\mathcal{P}}_\mathcal{H}(X) \to \mathcal{P}_\mathcal{H}(\langle X \rangle)$ is injective. Then by \cite[Lemma 3.2 and Corollary 4.2]{hormann2021note}, the evaluation map at depth 1,
\[
\widetilde{\mathcal{P}}^1_\mathcal{H}(X) \to \mathcal{P}_\mathcal{H}(\langle X \rangle),
\]
must also be injective.

\end{proof}

As the above proof demonstrates, if the $\cH$-period conjecture holds at depth $i$, then it also holds at every depth $k\geq i$. These conjectures reflect the increasing depth of functorial relationships among $\cH$-periods.

\begin{example}\label{example 4.1}
Let $\mathcal{C} = \Mi(\bar{\Q})$, the isogeny category of 1-motives over $\bar{\Q}$, and let $T: \mathcal{C} \to \Mod^\C_{\Q, \bar{\Q}}$ be the Betti-de Rham realization:
\[
M \mapsto (\Tsing(M) \otimes_{\Z} \Q, \TdR(M), \omega)
\]
where $\omega$ is the Betti-de Rham comparison isomorphism $\omega:\Tsing(M)\otimes_{\Z}\C\cong\TdR(M)\otimes_{\barQ}\C$. Define $\cH=(F,G):\Mi(\barQ)\to\Vect(\Q,\barQ)$ by
\[
F(M) := \Tsing(M) \otimes_{\Z} \Q, \quad G(M) := \TdRv(M)_{\bar{\Q}}.
\]
The $\cH$-periods of $\cC$ are the classical periods of 1-motives. In \cite[Theorem 9.7]{huber2022transcendence}, Hubber and Wüstholz showed that the depth-1 period conjecture holds for $M$, and hence so does the full conjecture: $\widetilde{\mathcal{P}}_\mathcal{H}(M) \cong \mathcal{P}_\mathcal{H}(\langle M \rangle)$, i.e, the Kontsevich-Zagier period conjecture for 1-motives over $\barQ$ holds (\cite[Theorem 9.10]{huber2022transcendence}).

\end{example}

We now turn to $p$-adic periods. Let $\gMi(K)$ denote the isogeny category of 1-motives with good reduction over $K$. Define the functor
\[
T: \gMi(K) \to \Mod^{\Bt}_{\Q_p, \bar{\Q}} \quad \text{by} \quad M \mapsto (\Vp(M), \TdR(M), \varpi \otimes \Bt)
\]
Here, $\varpi\otimes\Bt:\Tp(M)\otimes_{\Z_p}\Bt\to\TdR(M)\otimes_K\Bt$ is the isomorphism induced by the integration map $\varpi:\Tp(M)\to\TdR(M)\otimes_K\Bt$ of \cref{theorem p-adic integration pairing for M}. In the next section, we construct three period pairings---$\int^{\hp}$, $\int^{\Hp}$, and $\int^{\Hpp}$---for $T$ and show that the $\int^{\hp}$-period conjecture holds at depth 1, while the $\int^{\Hp}$- and $\int^{\Hpp}$-period conjectures hold at depth 2.

%%%%%%%%%%%%%%%%%%%%%%%%%%%%%%%%%%%%%%%%%%%%%%%%%%%%

%%%%%%%%%%%%%%%%%%%%%%%%%%%%%%%%%%%%%%%%%%%%%%%%%%%%%%%%%
\subsection{$\Q$-Structures $\Hp$ and $\Hpp$}

In this subsection, we introduce two canonical $\Q$-structures, denoted $\Hp$ and $\Hpp$, lying respectively in $\Tp(M) \otimes \C_p$ and $\Tp(M) \otimes \Bt$. These are defined by pulling back the space $\hp(M)$ along Fontaine’s comparison map $\varphi_M$ (see \cref{Fontaine's pairing for M}) and the $p$-adic integration map $\varpi_M$ (see \cref{Integration map for M}). The resulting structures admit associated pairings whose values $\alpha \in \Bt$ are precisely the Fontaine–Messing $p$-adic periods.

Throughout this subsection, we assume that $M = [L \xrightarrow{u} G]$ is a $1$-motive over a number field $\K$ with good reduction at $p$. Fix an embedding $\K \hookrightarrow K$ into a complete discretely valued field $K$ with residue field $k$ and fraction field of characteristic $0$. The base change of $M$ to $K$ will be denoted $M_K$, which also admits a good reduction and extends to a $1$-motive $M_{\cO_K}$ over $\cO_K$. For notational convenience, we suppress the indices $K$ and $\cO_K$ when no ambiguity arises.

\medskip

%%%%%%%%%%%%%%%%%%%%%%%%%%%%%%%%%%%%%%%%%%%%%%%%%%
%% p-adic Log
%%%%%%%%%%%%%%%%%%%%%%%%%%%%%%%%%%%%%%%%%%%%%%%%%%

The $p$-adic logarithm plays a central role in the construction of $p$-adic periods. Let $G$ be a commutative algebraic group defined over a complete $p$-adic subfield $K \subset \C_p$. The logarithm map
\[
\log_{G(K)} \colon G(K)_f \to \Lie(G(K))
\]
is a $K$-analytic homomorphism satisfying the following properties (see \cite[Chapter~III,~§7.6]{bourbaki1975elements} and \cite{zarhin1996p}):
\begin{enumerate}
    \item $G(K)_f$ is the smallest open subgroup of $G(K)$ such that the quotient $G(K)/G(K)_f$ is torsion-free;
    \item the differential $d\log_{G(K)} \colon \Lie(G)\to\Lie(\Lie(G))=\Lie(G)$ is the identity map;
    \item the logarithm map $\log_{G(K)}$ is functorial in $G$ and compatible with base change.
\end{enumerate}
These properties determine both $G(K)_f$ and $\log_{G(K)}$ uniquely. Explicitly, $G(K)_f$ consists of all elements $x \in G(K)$ for which the identity is an accumulation point of the set $\{x^n \mid n > 0\}$. That is, there exists an increasing sequence $(n_i)$ of positive integers such that $x^{n_i} \to 0$ in $G(K)$.

Let $\sG$ denote the $p$-divisible group over $\cO_K$ associated with $M = [L \to G]$, and let $\sG^0$ denote its connected component. Note that the connected component of $M[p^\infty]$ coincides with that of $G[p^\infty]$. The group $\sG^0$ is a formal $p$-divisible group of dimension $d$, represented by the formal spectrum $\Spf(\cO_K[[X_1, \dots, X_d]])$ with group law given by a formal group law
\[
F(X, Y) = X + Y + \text{(higher degree terms)} \in \cO_K[[X, Y]]^d.
\]
Consequently, the formal points $\sG^0(\cO_K)$ are identified with $\mathfrak{m}^d$, where $\mathfrak{m}$ is the maximal ideal of $\cO_K$. This endows $\sG^0(\cO_K)$ with the structure of a $p$-adic analytic subgroup of $G(K)$. The multiplication-by-$p$ map is given by a power series with linear term $px$ and higher-order terms in $\mathfrak{m}^2$. Thus, for $x \in \mathfrak{m}^d$, one has
\[
[p](x) = px + \text{(terms in } \mathfrak{m}^2),
\]
which implies $[p^n](x) \to 0$ in the $\mathfrak{m}$-adic topology. Therefore, every element of $\sG^0(\cO_K)$ is topologically nilpotent, and we conclude:
\[
\sG^0(\cO_K) \subset G(K)_f.
\]
In contrast, the étale part $\sG^{\text{ét}}$ satisfies $\sG^{\text{ét}}(\cO_K) \subset G(K)$ is torsion and discrete.

Denote by $\log_{\sG(\cO_K)}$ and $\log_{\sG^0(\cO_K)}$ the restrictions of $\log_{G(K)}$ to $\sG(\cO_K)$ and $\sG^0(\cO_K)$, respectively. Given $x \in \sG(\cO_K)$, there exists an integer $n > 0$ such that $p^n x \in \sG^0(\cO_K)$, and the logarithm satisfies
\[
\log_{\sG(\cO_K)}(x) = \frac{1}{p^n} \log_{\sG^0}(p^n x).
\]
It follows that
\[
\im(\log_{\sG(\cO_K)}) \otimes \Q = \im(\log_{\sG^0(\cO_K)}) \otimes \Q \subset \Lie(G)_K.
\]

\begin{example}\label{logarith of p-divisible multiplicative}
When $G=\mu_{p^{\infty}}=\G_m[p^{\infty}]$ we have 
$$\mu_{p^{\infty}}(\cO_L)=\Hom_{\cO_K-cont}(\cO_L[[t]],\cO_L)\cong\fm_L\cong 1+\fm_L ,$$ where the last isomorphism is given by $x\mapsto x+1$. Let $\sI:=(t)$ be the augmentation ideal of the formal p-divisible group $\mup$, we get
$$t_{\mu_{p^{\infty}}}(L)=\Hom_{\cO_K}(\sI/\sI^2,L)\cong L.$$
Thus, we have the commutative diagram 
\begin{equation*}
    % https://tikzcd.yichuanshen.de/#N4Igdg9gJgpgziAXAbVABwnAlgFyxMJZABgBpiBdUkANwEMAbAVxiRAB12BbJtACk4BjAPIB9ADIBKEAF9S6TLnyEUARnJVajFmxyjgnHmhl8ps+SAzY8BImVWb6zVohCqA1JwBmXCeYXWykTqDtROOq7ispowUADm8ESgXgBOEFxIZCA4EEjqWs5snAwQcfqGvDIg1Ax0AEYwDAAKijYqIAwwXjj+IKnpmdQ5SABMYdouHOyCBHHVHfWNLYG2rp3dvf0ZiPnDiADM44WuQrPztQ3NrUFrXT1yyWnbY9m5B0cRYEwMDDWLVyt2ut7hQZEA
\begin{tikzcd}
\mup(\cO_L) \arrow[r, "\log_{\mup}"] \arrow[d, "\cong"] & t_{\mup}(L) \arrow[d, "\cong"] \\
1+\fm_L \arrow[r]                                       & L                             
\end{tikzcd}
\end{equation*}
where the left vertical arrow and right vertical arrow are given by $f\mapsto 1+f(t)$ and $g\mapsto g(t)$ respectively. Let $x\in\fm_L$ and $f\in \mup(\cO_L)$. By considering the fact that the group law on formal p-divisible group $\Spf(\cO_K[[t]])$ is induced by $\cO_K[[t]]\to\cO_K[[t,t']],\, t\mapsto (1+t)(1+t')-1$, we can write
\[
(p^nf)(t)=f([p^n]_{\mup}(t))=f((1+t)^{p^n}-1)=(1+f(t))^{p^n}-1
\]
and 
\begin{comment}
therefore 
\begin{equation}\label{logarithm series 1 for mup Gm}
\log_{\mup}(1+x)=\colim_{n\to\infty}\frac{(1+x)^{p^n}-1}{p^n}=\colim_{n\to\infty}\sum^{p^n}_{i=1}\frac{1}{p^n}\binom{p^n}{i}x^i.
\end{equation}
For each $n$ and $i\leq p^n$,  we obtain 
\[
\frac{1}{p^n}\binom{p^n}{i}x^i-\frac{(-1)^{i-1}x^i}{i}=\frac{(p^n-1)\cdots (p^n-i+1)-(-1)^{i-1}(i-1)!}{i!}x^i.
\]
As the numerator of the right side is divisible by $p^n$, we have
\[
\nu\Big(\frac{1}{p^n}\binom{p^n}{i}x^i-\frac{(-1)^{i-1}x^i}{i}\Big)\geq n+i\nu(x)-\nu(i!)\geq n+i\nu(x)-\frac{i}{p-1}.
\]
This means that both series $\sum^{\infty}_{i=1}\frac{(-1)^{i-1}}{i}x^i$ and the series in the right side of \cref{logarithm series 1 for mup Gm} p-adically converge to the same number. Thus,    
\end{comment}
one can show that
\[
\log_{\mup}(1+x)=\sum^{\infty}_{i=1}\frac{(-1)^{i-1}x^i}{i},
\]
which coincides with the usual p-adic logarithm $\log_p:1+\fm\to L$.
\end{example}

\begin{defn}\label{definition hp}
We define
\[
\hp(M, \K) := \im\big(\log|_{\sG(\K)} \colon \sG(\cO_K) \to \Lie(G)_K\big) \otimes \Q,
\]
where $\sG(\K) := \sG(\cO_K) \cap G(\K)$. We then define the global version as the filtered colimit
\[
\hp(M) := \hp(M, \bar{\K}) := \colim_{\K' \subset \bar{\K}} \hp(M, \K'),
\]
taken over all finite extensions $\K' / \K$.
\end{defn}

\begin{defn}\label{definition Hp}
We define $\Hp(M)$ as the fibre product of $\hp(M, \bar{\K})$ and $\Tp(M) \otimes_{\Z_p} \C_p$ over $\Lie(G) \otimes_{\cO_K} \C_p(1)$, via the Fontaine's map
\[
\phi_M \otimes 1_{\C_p} \colon \Tp(M) \otimes_{\Z_p} \C_p \longrightarrow \Lie(G) \otimes_{\cO_K} \C_p(1),
\]
as in \cref{Fontaine's map for M}. This gives rise to the following cartesian diagram:
\[
\begin{tikzcd}
\Hp(M) \arrow[d] \arrow[r] & \hp(M, \bar{\K}) \arrow[d, "\iota"] \\
\Tp(M) \otimes_{\Z_p} \C_p \arrow[r, "\phi_M \otimes 1_{\C_p}"] & \Lie(G) \otimes_{\cO_K} \C_p(1)
\end{tikzcd}
\]
Here, $\iota$ is the $\Q$-linear map induced by the natural embedding
\[
\hp(M) \hookrightarrow \Lie(G) \otimes_{\cO_K} \C_p \to \Lie(G) \otimes_{\cO_K} \C_p(1), \quad x \mapsto x \otimes 1.
\]
\end{defn}

By construction, $\Hp(M)$ is a $\Q$-linear subspace of $\Tp(M)\otimes_{\Z_p}\C_p$. We have the following commutative diagram with exact rows:
\[
\begin{tikzcd}[column sep=small]
0 \arrow[r] & S \arrow[r] \arrow[d, Rightarrow, no head] & \Hp(M) \arrow[r] \arrow[d, hook] & \hp(M) \arrow[r] \arrow[d, hook] & 0 \\
0 \arrow[r] & S \arrow[r] & \Tp(M)\otimes_{\Z_p}\C_p \arrow[r] & \Lie(G)\otimes_{\cO_K}\C_p(1) \arrow[r] & 0
\end{tikzcd}
\]

By \cref{Hodge-Tate weights of tate module of 1-motives}, the Hodge–Tate weights of $\Vp(M)$ are $0$ and $1$, appearing with multiplicities $n = \operatorname{rank}(L) + \dim(A)$ and $m = \dim(T) + \dim(A)$, respectively. Hence, $S \cong \C_p^{n}$. The bottom sequence is split, and this splitting is unique, as
\[
\Ext^1(\C_p(1)^m, \C_p^n) = H^1(K, \C_p(-1)^{mn}) = 0, \quad \Hom(\C_p(1)^m, \C_p^n) = H^0(K, \C_p(-1))^{mn} = 0.
\]
Here, $\Ext^i$ is taken in the category of continuous $\Gamma_K$-representations. Consequently, by the Hodge–Tate decomposition,
\[
S = \coLie(\sG\ve)\otimes_{\cO_K}\C_p = V(M)\otimes\C_p.
\]
The second equality follows from identification \ref{identification V(M) and coLie}.

Considering the canonical exact sequence \ref{canonical exact sequence for any 1-motive}, we obtain the following commutative diagram with split exact rows:
\begin{equation}\label{diagram 4.2}
\begin{tikzcd}
& 0 & 0 & & \\
0 \arrow[r] & V(L)\otimes\C_p \arrow[r, Rightarrow, no head] \arrow[u] & \Hp(L) \arrow[r] \arrow[u] & 0 & \\
0 \arrow[r] & V(M)\otimes\C_p \arrow[u] \arrow[r] & \Hp(M) \arrow[u] \arrow[r] & {\hp(M,\bar{\K})} \arrow[u] \arrow[r] & 0 \\
0 \arrow[r] & V(G)\otimes\C_p \arrow[r] \arrow[u] & \Hp(G) \arrow[r] \arrow[u] & {\hp(G,\bar{\K})} \arrow[u, Rightarrow, no head] \arrow[r] & 0 \\
& 0 \arrow[u] & 0 \arrow[u] & 0 \arrow[u] &
\end{tikzcd}    
\end{equation}

To complement the construction of $\Hp(M)$ via Fontaine’s comparison map, we now define a second $\Q$-structure within $\Tp(M) \otimes \Bt$ by pulling back $\hp(M)$ along the $p$-adic integration map $\varpi_M$. This yields the space $\Hpp(M)$ of periods arising from $p$-adic integration.
\begin{defn}\label{definition Hpp}
We define $\Hpp(M)$ as the pullback of $\Hp(M) \hookrightarrow \Tp(M)\otimes_{\Z_p}\C_p$ along the canonical surjection
\[
\Tp(M)\otimes_{\Z_p}\Bt \longrightarrow \Tp(M)\otimes_{\Z_p}\C_p.
\]
\end{defn}

Thus, the spaces $\Hp(M)$ and $\Hpp(M)$ define two canonical $\Q$-structures on the $p$-adic realizations of the 1-motive $M$, each encoding arithmetic information via Fontaine's comparison isomorphism and $p$-adic integration, respectively.

We have the following commutative diagram:
\begin{equation}\label{7.2.6}
\begin{tikzcd}
0 \arrow[r] & \Tp(M)\otimes_{\Z_p}\C_p(1) \arrow[r] \arrow[d, equal] & \Hpp(M) \arrow[r] \arrow[d, hook] & \Hp(M) \arrow[r] \arrow[d, hook] & 0 \\
0 \arrow[r] & \Tp(M)\otimes_{\Z_p}\C_p(1) \arrow[r] & \Tp(M)\otimes_{\Z_p}\Bt \arrow[r] & \Tp(M)\otimes_{\Z_p}\C_p \arrow[r] & 0
\end{tikzcd}
\end{equation}
with exact rows. The bottom sequence is obtained by tensoring the canonical exact sequence
\[
0 \to \C_p(1) \to \Bt \to \C_p \to 0
\]
with the $\Z_p$-module $\Tp(M)$.

\begin{remark}
Let $\langle \,,\, \rangle$ denote the canonical pairing
\[
\Lie(G\nat)_{\Bt} \times \coLie(G\nat)_{\Bt} \longrightarrow \Bt.
\]
The splitting of the middle row in diagram \ref{diagram 4.2} implies that $\Hp(M)$ embeds canonically into $\Lie(G\nat)_{\C_p}$. Therefore, for any $x \in \Hp(M)$ and $\omega \in \coLie(G)$, the pairing satisfies
\[
\langle x, \omega \rangle = \int^{\phi}_x \omega = \int^{\varpi}_x \omega,
\]
where the second equality follows from the compatibility of the $p$-adic integration pairing with the Hodge filtration (see \cref{theorem p-adic integration pairing for M}).

This compatibility is further illustrated in the commutative diagram below:
\[
\begin{tikzcd}
0 \arrow[r] & \Tp(M)\otimes_{\Z_p}\C_p(1) \arrow[r] \arrow[d, equal] & \Hpp(M) \arrow[r] \arrow[d, hook] & \Hp(M) \arrow[r] \arrow[d, hook] & 0 \\
0 \arrow[r] & \Tp(M)\otimes_{\Z_p}\C_p(1) \arrow[r] & \Tp(M)\otimes_{\Z_p}\Bt \arrow[r] \arrow[dd, "\varpi_M\otimes\Bt"] & \Tp(M)\otimes_{\Z_p}\C_p \arrow[r] \arrow[d, "\varphi_M \otimes \C_p", two heads] & 0 \\
& & & \Lie(G)\otimes_K \C_p(1) \arrow[d, hook] & \\
0 \arrow[r] & V(M)\otimes_K \Bt \arrow[r] & \Lie(G\nat)\otimes_K \Bt \arrow[r] & \Lie(G)\otimes_K \Bt \arrow[r] & 0
\end{tikzcd}
\]
The top squares are commutative by construction, while the bottom square commutes by diagram \ref{commutative diagram respect filtration}.

Suppose now that $x \in \Hpp(M)$. Then $x$ can be written as $x = u + v$, where the image of $u$ lies in $\Hp(M)$ and $v \in \Tp(M)\otimes_{\Z_p}\C_p(1)$, hence $v$ is in the kernel of the projection $\Tp(M)\otimes_{\Z_p}\Bt \to \Tp(M)\otimes_{\Z_p}\C_p$. Although such a decomposition is not unique, the diagram above ensures that $v$ maps to an element of $V(M)\otimes_K\Bt$. It follows that for all $\omega \in \coLie(G)$,
\[
\int^{\varpi}_x \omega = \int^{\varpi}_u \omega + \int^{\varpi}_v \omega = \int^{\varpi}_u \omega + 0 = \int^{\phi}_u \omega.
\]
\end{remark}

As a consequence, we have

\begin{cor}\label{comparing parings Hp and Hpp}
Let $\omega \in \coLie(G)_{\bar{\K}}$, and let $\pi : \Hpp(M) \to \Hp(M)$ denote the canonical projection. Then
\[
\int^{\varpi}_x \omega = \int^{\phi}_{\pi(x)} \omega.
\]
\end{cor}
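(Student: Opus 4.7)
The plan is to reduce the identity to the structural diagram appearing in the remark preceding the statement, exploiting two facts: first, that $\int^{\varpi}$ respects the Hodge filtration by \cref{theorem p-adic integration pairing for M}; second, that the kernel of $\Hpp(M) \to \Hp(M)$ is annihilated by pairing with $\coLie(G)$.

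First, I would lift $x \in \Hpp(M)$ via the short exact sequence
\[
0 \to \Tp(M) \otimes_{\Z_p} \C_p(1) \to \Hpp(M) \to \Hp(M) \to 0
\]
to obtain a decomposition $x = u + v$ with $u$ mapping to $\pi(x) \in \Hp(M)$ and $v \in \Tp(M) \otimes_{\Z_p} \C_p(1)$. By bilinearity of $\int^{\varpi}$, it then suffices to compute the contributions of $u$ and $v$ separately.

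Next, I would argue that $\int^{\varpi}_v \omega = 0$. From the large diagram in the remark, the image of $v$ under $\varpi_M \otimes \Bt$ lies in $V(M) \otimes_K \Bt \subset \Lie(G\nat) \otimes_K \Bt$. Since $\omega \in \coLie(G) = \Fil^1 \TdRv(M)$ corresponds, via dualization of $0 \to V(M) \to \TdR(M) \to \Lie(G) \to 0$, to a functional vanishing identically on $V(M)$, the pairing $\langle \varpi_M(v), \omega \rangle$ is zero.

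Then I would identify $\int^{\varpi}_u \omega$ with $\int^{\phi}_{\pi(u)} \omega$. Writing $u = \sum_i x_i \otimes b_i$ with $x_i \in \Tp(M)$ and $b_i \in \Bt$, the $\Bt$-bilinearity of $\int^{\varpi}$ and the identity $\int^{\varpi}_{x_i} \omega = \int^{\phi}_{x_i} \omega \in \C_p(1) = \Fil^1 \Bt$ from \cref{theorem p-adic integration pairing for M} yield
\[
\int^{\varpi}_u \omega = \sum_i b_i \int^{\phi}_{x_i} \omega.
\]
Because $\Fil^1 \Bt \cdot \Fil^1 \Bt = 0$ in $\Bt = \BdRp / t^2 \BdRp$, each product $b_i \cdot \int^{\phi}_{x_i} \omega$ depends only on the image $\bar{b}_i$ of $b_i$ in $\C_p = \Bt/\C_p(1)$. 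The resulting sum $\sum_i \bar{b}_i \int^{\phi}_{x_i} \omega$ is exactly $\int^{\phi}_{\bar u} \omega$, where $\bar u \in \Tp(M) \otimes \C_p$ is the image of $u$, which by construction equals $\pi(u) = \pi(x)$.

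Combining these steps gives $\int^{\varpi}_x \omega = \int^{\phi}_{\pi(x)} \omega$, and independence from the choice of lift $u$ follows automatically since alternative lifts differ by elements of $\Tp(M) \otimes \C_p(1)$, whose contribution vanishes by the second step. The main subtlety is the manipulation in the last step: recognizing that filtration compatibility forces the output into $\Fil^1 \Bt$, on which the multiplicative action of $\Bt$ factors through $\C_p$, so that the $\Bt$-coefficients collapse cleanly to their $\C_p$-shadows.
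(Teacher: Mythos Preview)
Your proof is correct and follows essentially the same route as the paper's own argument in the remark immediately preceding the corollary: decompose $x = u + v$ with $v \in \Tp(M)\otimes\C_p(1)$, observe that $v$ lands in $V(M)\otimes_K\Bt$ under $\varpi_M\otimes\Bt$ and hence pairs to zero with $\omega\in\coLie(G)$, and then identify $\int^{\varpi}_u\omega$ with $\int^{\phi}_{\pi(x)}\omega$. Your third step is in fact more careful than the paper's, which writes $\int^{\varpi}_u\omega = \int^{\phi}_u\omega$ somewhat elliptically; your explicit use of $\Fil^1\Bt\cdot\Fil^1\Bt=0$ to collapse the $\Bt$-coefficients to their $\C_p$-shadows makes transparent why the $\Bt$-linear extension of $\int^{\varpi}$ restricted to $\coLie(G)$ really does factor through the $\C_p$-linear extension of $\int^{\phi}$.
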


The $\Q$-structure $\Hpp(M)$ is large, containing $\Tp(M)\otimes_{\Z_p}\C_p(1)$, whose image under $\int^{\varpi}$ is generally unknown. To address this, we  define a more refined $\Q$-structure inside $\Hpp(M)$ using the crystalline structure of $\Bt$. Let $\Bct := \Bcrisp/t^2\Bcrisp \subseteq \Bt$. Since the Frobenius $\phi_{\mathrm{cris}}$ is injective on $\Bcrisp$, and acts on $t$ by multiplication by $p$, it induces an injective endomorphism on $\Bct$. We thus define:

\begin{defn}\label{definition of tilde of Hpp}
Recall the definition of $\tilde{\Tp}(M)$ from \ref{tilde Tp and Tcrys}. We define $\tilde{\Hpp}(M)$ to be the pullback of the inclusion $\Hpp(M) \hookrightarrow \Tp(M)\otimes_{\Z_p}\Bt$ along the natural map
\[
\tilde{\Tp}(M)\otimes_{\Z_p}\Brigt  \hookrightarrow \Tp(M)\otimes_{\Z_p}\Bt,
\]
that is,
\begin{equation}
\begin{tikzcd}
\tilde{\Hpp}(M) \arrow[d, hook] \arrow[r, hook] & \Hpp(M) \arrow[d, hook] \\
\tilde{\Tp}(M)\otimes_{\Z_p}\Brigt \arrow[r, hook] & \Tp(M)\otimes_{\Z_p}\Bt
,\end{tikzcd}
\end{equation}
where $\Brigt:=\bigcap^{\infty}_{m=1}\phi^m_{\mathrm{cris}}(\Bct)$ is the rigidified subring of $\Bct$.
\end{defn}
\begin{prop}\label{Brigt cap Cp(1)}
We have $\C_p(1)\cap \Brigt=0$.
\end{prop}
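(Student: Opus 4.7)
The plan is to argue by contradiction, exploiting the identity $\phi_{\mathrm{cris}}(t) = pt$ together with the injectivity of $\phi_{\mathrm{cris}}$ on $\Bct$ (noted in the paper) and the $p$-adic separatedness of $\cO_{\C_p}$. Suppose $x \in \C_p(1) \cap \Brigt$ is nonzero. Under the identification $\C_p(1) \cong t \cdot \C_p$ inside $\Bt$, write $x = t\alpha$ with $\alpha \in \C_p$, $\alpha \neq 0$. By the definition of $\Brigt$, for every $m \geq 1$ there exists $y_m \in \Bct$ with $\phi_{\mathrm{cris}}^m(y_m) = x$, and by injectivity of $\phi_{\mathrm{cris}}$ on $\Bct$, these $y_m$ form a coherent inverse Frobenius system: $y_m = \phi_{\mathrm{cris}}(y_{m+1})$.

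The key computation is the action of $\phi_{\mathrm{cris}}$ on the subspace $\C_p(1) \cap \Bct \subset \Bct$. Writing $y_m$ with a representative $\tilde y_m \in \Bcris^+$ and analysing the equation $\phi^m(\tilde y_m) \equiv t\alpha \pmod{t^2 \Bcris^+}$ modulo the filtration, one shows that the leading ``$t$-coefficient'' of $y_m$ is forced to equal $\alpha/p^m$ (up to corrections lying in lower-order pieces). Concretely, each application of $\phi_{\mathrm{cris}}^{-1}$ divides the $t$-part by $p$, since $\phi_{\mathrm{cris}}(t) = pt$. For $y_m$ to admit a representative in $\Bcris^+$, the element $\alpha/p^m$ must itself be representable from a lift in $\Bcris^+$, and the rigidified condition — elements $(y_m)_m$ come from a compatible system in $\Bct$ — translates into a uniform bound on the $p$-adic denominators, i.e., a fixed $N$ independent of $m$ such that $p^N y_m$ has a representative in $\Acris/t^2\Acris$. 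Combined with the $p^{-m}$ scaling on the $t$-part, this forces $\alpha \in p^{m-N} \cO_{\C_p}$ for all $m$.

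Passing to the limit $m \to \infty$ and using $\bigcap_m p^m \cO_{\C_p} = 0$ ($p$-adic separatedness of $\cO_{\C_p}$), we conclude $\alpha = 0$, a contradiction. Hence $\C_p(1) \cap \Brigt = 0$. The main obstacle is establishing the uniform bound on $p$-denominators of the $y_m$: this is exactly the content of ``rigidification'' built into the definition $\Brigt = \bigcap_m \phi_{\mathrm{cris}}^m(\Bct)$, and it requires a careful analysis of the interplay between the divided-power structure on $\Acris$, the Frobenius divisibility $\phi(\Fil^r \Acris) \subset p^r \Acris$ for $r \leq p-1$, and the truncation modulo $t^2$. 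Making this bound precise is the technical heart of the proof and ensures that the $p^{-m}$ scaling on the $\C_p(1)$-component cannot be absorbed by unbounded denominators in the ambient $\Bcris^+$.
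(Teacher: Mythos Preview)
Your overall strategy---exploit $\phi_{\mathrm{cris}}(t)=pt$, iterate, and invoke $p$-adic separatedness---matches the paper's. But the execution diverges at the crucial step, and your version leaves a real gap.

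The paper's argument is shorter and sidesteps the obstacle you yourself name. Given $x\in\C_p(1)\cap\Brigt$ and $x_m\in\Bct$ with $\phi^m_{\mathrm{cris}}(x_m)=x$, the paper first observes that $x_m\in\bar t\,\Bct$ (using injectivity of $\phi_{\mathrm{cris}}$ on $\Bct$ together with the fact that $\phi_{\mathrm{cris}}$ preserves $\bar t\,\Bct$). Writing $x_m=\bar t\,z_m$ and applying $\phi^m_{\mathrm{cris}}$ gives $x=p^m\bar t\,\phi^m_{\mathrm{cris}}(z_m)\in p^m\Bct$ directly. Since $\Bct$ is a quotient of the $p$-adically separated ring $\Bcrisp$, it is itself $p$-adically separated, so $x\in\bigcap_m p^m\Bct=0$. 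No denominator bookkeeping on the preimages is required: the $p^m$-divisibility lands on $x$, not on the $x_m$.

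Your route instead tries to track a ``$t$-coefficient'' $\alpha/p^m$ of $y_m$ and then deduce $\alpha\in p^{m-N}\cO_{\C_p}$ from a uniform bound $p^N y_m\in\Acris/t^2\Acris$. You correctly flag this bound as the technical heart---but you do not establish it, and there is no evident reason it should hold: the definition of $\Brigt$ constrains only $x$, not its Frobenius preimages, and iterated $\phi_{\mathrm{cris}}^{-1}$ can in principle introduce unbounded $p$-denominators. The splitting $\Bct\approx\C_p\oplus t\C_p$ you implicitly rely on is also not $\phi_{\mathrm{cris}}$-stable, so the ``$t$-coefficient'' is not cleanly transported under $\phi_{\mathrm{cris}}^{-1}$. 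The fix is precisely the paper's observation that each $y_m$ already lies in $\bar t\,\Bct$, which collapses the analysis and makes the separatedness argument immediate in $\Bct$ rather than in $\cO_{\C_p}$.
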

\begin{proof}
Note that \( \C_p(1) \) is (non-canonically) isomorphic to the \(\C_p\)-line \( \C_p \cdot t \), and its image in \( \Bct \) is given by the class \( \bar{t} := t \mod t^2 \Bcrisp \). Hence, $\C_p=\Bct/\bar{t}\Bct$. Observe that:
\[
\varphi_{\mathrm{cris}}(\bar{t}) = \varphi_{\mathrm{cris}}(t) \mod t^2 = pt \mod t^2 = p \cdot \bar{t},
\]
and hence by iteration,
\[
\varphi^m_{\mathrm{cris}}(\bar{t}) = p^m \cdot \bar{t}.
\]

Suppose now that \( x \in \C_p(1) \cap \bigcap_{m=1}^\infty \varphi^m_{\mathrm{cris}}(\Bct) \). Then \( x\in t\Bcrisp/t^2\Bcrisp\), and for all \( m \), there exists \( x_m \in \Bct \) such that
\[
\varphi^m_{\mathrm{cris}}(x_m) = x.
\]
Since $\phi_{\mathrm{cris}}$ is injective, we can conclude that $x_m\in\bar{t}\Bct$. Hence, $x=\phi^m_{\mathrm{cris}}(x_m)\in p^m\Bct$. It follows that
\[x\in\bigcap_{m\geq 1}p^m\Bct=0, \]
since $\Bct$ is p-adically separated, being a quotient of the p-adically separated ring $\Bcrisp$.
\end{proof}

To define the relevant period pairings with respect to the $\Q$-structures $\Hp$ and $\Hpp$, we must refine the target space accordingly. The description of $\hp(M)$ suggests that its elements arise from the connected component $\sG^0$ of $\sG$, whose associated isocrystal has non-zero slopes. Consequently, it is natural—and indeed necessary, as shown in the proof of our main theorem (\cref{level 2 period cinjecture for Hpp})—to pair them with differential forms that also correspond to the non-zero slope part of the isocrystal associated to $M$. Otherwise, we may encounter some vanishing periods, making it difficult to trace the underlying relations.

Let $N$ be a non-zero isocrystal over $K_0$. By \cite{simpleproofofDieudonne-Maninclassification}, it admits a unique decomposition
\[
N = \bigoplus N(\alpha_i),
\]
where each $N(\alpha_i)$ is a nonzero isoclinic sub-isocrystal of slope $\alpha_i$. We define $\tilde{N}$ to be the direct sum of the non-zero slope components:
\[
\tilde{N} := \bigoplus_{\alpha_i \neq 0} N(\alpha_i) \subset N.
\]

Let $\K'$ be a finite extension of $\K$ in which $p$ remains unramified, and let $k'$ be the residue field at the prime above $p$. Then, by the crystalline–de Rham comparison isomorphism (\cref{crystalline-de Rham comparison isomorphism}), we obtain a canonical identification
\[
\TdR\ve(M)\otimes_{\K} K' \cong \Tcrys\ve(\bar{M})\otimes_{\W(k')} K',
\]
where $K'$ is the $p$-adic completion of $\K'$. Let $K'_0$ denote the fraction field of $\W(k')$, and write $N := \Tcrys\ve(\bar{M})\otimes_{\W(k')} K'_0$. Let $\tilde{N}_{K'_0} \subset N$ denote the subobject of non-zero slopes as above. We then define $\tilde{N}(\K')$ as the fibre product
\begin{equation}\label{eq 6.2.8}
\begin{tikzcd}
\tilde{N}(\K') \arrow[d, hook] \arrow[r, hook] & \tilde{N}_{K'_0} \arrow[d, hook] \\
\TdRv(M)_{\K'} \arrow[r, hook] & \Tcrysv(\bar{M})\otimes_{\W(k')} K'
\end{tikzcd}
\end{equation}
where the bottom map is the composition with  the canonical comparison isomorphism
\[
\TdRv(M)_{\K'} \hookrightarrow \TdRv(M)\otimes_{\K} K' \xrightarrow{\sim} \Tcrys\ve(\bar{M})\otimes_{\W(k')} K'.
\]

The action of Frobenius is compatible with unramified extension of local fields, and this construction is compatible with unramified extension of $\K$ at $p$. We define
\begin{equation}
\tilde{N}(M) := \varinjlim_{\K'} \tilde{N}(\K'), 
\end{equation}
where the colimit is taken over all finite extensions $\K'$ of $\K$ such that $p$ remains unramified. Equivalently, the colimit may be taken over all finite extensions $\K'$ satisfying $\K \subseteq \K' \subseteq \K^u := K^{\mathrm{ur}} \cap \bar{\K}$, where $\K^u$ denotes the maximal unramified extension of $\K$ at $p$. This extension is infinite and Galois over $\K$.%, and its decomposition group at $p$ is isomorphic to the profinite completion of the absolute Galois group of the residue field (see \cite[Chapter II, \S 5]{neukirch_algebraic_1999} and \cite[Chapter III]{serre_local_1979}).

Since every element $\omega \in \Lie(G\nat)_{\bar{\K}}$ belongs to $\Lie(G\nat)_{\K'}$ for some finite $\K'$, the space $\tilde{N}(M)$ embeds into
\[
\TdRv(M_{\K}) \otimes \bar{K} \hookrightarrow \TdRv(M_K)\otimes_K \Bt = \coLie(G\nat)\otimes_K \Bt.
\]
Thus, $\tilde{N}(M)$ is naturally a $\K^u$-linear subspace of $\coLie(G\nat)_{\bar{\K}}$.

\begin{defn}\label{three pairings hp Hp Hpp}
We define the following restricted period pairings:
\begin{itemize}
    \item The pairing induced by $\phi_M$ on $\Hp(M)$:
    \[
    \int^{\Hp} : \Hp(M) \times \coLie(G)_{\bar{\K}} \longrightarrow \C_p(1).
    \]
    \item The pairing induced by $\varpi_M$ on the refined space:
    \[
    \int^{\Hpp} : \tilde{\Hpp}(M) \times \tilde{N}(M) \longrightarrow \Bt.
    \]
    \item The restriction of $\int^{\Hp}$ to $\hp(M,\bar{\K})$:
    \[
    \int^{\hp} : \hp(M,\bar{\K}) \times \coLie(G)_{\bar{\K}} \longrightarrow \C_p(1).
    \]
\end{itemize}
We say that a $p$-adic number $\alpha \in \Bt$ is an $\Hpp$-period, $\Hp$-period, or $\hp$-period of a $1$-motive $M \in \gMi(\K)$ if it lies in the image of the corresponding pairing.
\end{defn}

\begin{remark}
Let $T : \gMi \to \Mod_{\Q,\bar{\Q}}^{B_2}$ be defined by
\[
M \mapsto \left( \Vp(M),\, \TdR(M)_{\bar{\Q}},\, \varpi \otimes \Bt \right).
\]
The pairing $\int^{\Hpp}$ defines a period pairing for $T$ in the sense of \cref{period pairing for T}, with
\[
F(M) := \tilde{\Hpp}(M), \qquad G(M) := \tilde{N}(M).
\]
We refer to any element of the image of $\int^{\Hpp}$ as an \emph{$\Hpp$-period}. The $\K^u$-vector space they span is denoted by $\cP_{\Hpp}\langle M \rangle$ (see \cref{Space of H-periods}). The corresponding formal period spaces are denoted by $\tilde{\cP}_{\Hpp}(M)$ and $\tilde{\cP}^i_{\Hpp}(M)$ for depth $i$ (\cref{space of formal H-periods}, \cref{space of formal periods at depth i}).

Similarly, the pairing $\int^{\Hp}$ defines a period pairing with
\[
F'(M) := \Hp(M), \qquad G'(M) := \coLie(G)_{\bar{\Q}},
\]
yielding the space of $\Hp$-periods $\cP_{\Hp}\langle M \rangle \subset \C_p(1)$. We denote the formal analogues by $\tilde{\cP}_{\Hp}(M)$ and $\tilde{\cP}^i_{\Hp}(M)$. Analogous notations apply to $\hp$-periods.
\end{remark}

\begin{prop}
The assignment $\tilde{N} : \gMi(\bar{\K}) \to \Vect(\bar{\K})$, given by $M \mapsto \tilde{N}(M)$, defines a contravariant exact functor.
\end{prop}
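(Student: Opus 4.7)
The plan is to exhibit $\widetilde{N}$ as obtained from contravariant exact functors by operations---duality, slope decomposition, base change, fibre product, and filtered colimit---each of which preserves both properties. Since Section~2 establishes that $\Tcrys$ and $\TdR$ are covariant and exact, both $\Tcrysv$ (obtained by applying the contravariant crystal $\D$ to the covariant $p$-divisible group functor) and $\TdRv$ are contravariant and exact. A morphism of isocrystals preserves the slope decomposition, so the slope-$0$ component $N\mapsto N(0)$ and its complement $N\mapsto \widetilde{N}_{K_0'}$ are functorial; they are moreover exact as functors to $K_0'$-vector spaces, which follows by faithfully flat descent from the semisimplicity of the category of isocrystals over $\W(\bar{\F}_p)[1/p]$ granted by Dieudonn\'e--Manin. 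Base change $\otimes_{K_0'} K'$ is exact, and the fibre-product construction \eqref{eq 6.2.8} produces a contravariant functor $\widetilde{N}(\K')$ for each finite extension $\K'/\K$ unramified at $p$. Finally, the filtered colimit $\widetilde{N}(M) = \colim_{\K' \subset \K^u}\widetilde{N}(\K')(M)$ preserves contravariance.

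For exactness, fix a short exact sequence $0 \to M' \to M \to M'' \to 0$ in $\gMi(\bar{\K})$ defined over some finite unramified $\K'/\K$. The contravariant functors above yield a commutative diagram with exact rows
\[
\begin{tikzcd}[column sep=small]
0 \arrow[r] & \TdRv(M'')_{\K'} \arrow[d] \arrow[r] & \TdRv(M)_{\K'} \arrow[d] \arrow[r] & \TdRv(M')_{\K'} \arrow[d] \arrow[r] & 0 \\
0 \arrow[r] & N(M'')(0)\otimes_{K_0'} K' \arrow[r] & N(M)(0)\otimes_{K_0'} K' \arrow[r] & N(M')(0)\otimes_{K_0'} K' \arrow[r] & 0
\end{tikzcd}
\]
whose vertical arrows are the compositions of the crystalline--de Rham comparison isomorphism with the slope-$0$ projection. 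By \eqref{eq 6.2.8}, $\widetilde{N}(\K')(M)$ identifies with the kernel of the middle vertical map; the snake lemma then gives left exactness of $\widetilde{N}(\K')$ and reduces right exactness to the vanishing of the connecting map $\widetilde{N}(\K')(M') \to \coker\bigl(\TdRv(M'')_{\K'}\to N(M'')(0)\otimes K'\bigr)$, which we verify after passing to the filtered colimit.

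The vanishing after taking $\colim_{\K'\subset\K^u}$ is the main technical point. The slope-$0$ piece $N(0)$ of the Dieudonn\'e isocrystal corresponds under the comparison isomorphism to the \'etale part of $M[p^\infty]$; this \'etale part is a finite \'etale $p$-divisible group that trivialises after base change to $K^{ur}$. Consequently, any element of $N(M'')(0)\otimes K$ can be realised by an element of $\TdRv(M'')_{\K''}$ for a sufficiently large finite unramified extension $\K''$, producing the required lift of any class in $\widetilde{N}(M')$ to one in $\widetilde{N}(M)$ and forcing the connecting map to vanish in the colimit. The main obstacle is precisely this descent of slope-$0$ lifts from $K^{ur}$ down to a finite unramified extension of $\K$; once it is verified via the finite \'etale structure of the slope-$0$ piece, exactness of $\widetilde{N}$ follows formally, and contravariance is inherited from the constituent functors.
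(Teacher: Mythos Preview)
Your overall approach---building $\widetilde{N}$ from exact constituent functors and appealing to the snake lemma---is in the same spirit as the paper's, though considerably more explicit. The paper's proof is three lines: it asserts that taking non-zero slopes is exact on isocrystals, that $\TdR$ and base change are exact, and that ``pullbacks preserve exactness.'' You correctly unpack this last claim, identifying $\widetilde{N}(\K')(M)$ with the kernel of the slope-$0$ projection and thereby establishing left exactness via the snake lemma; this part is sound and sharper than the paper.

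The gap is in your right-exactness step. Your claim that ``any element of $N(M'')(0)\otimes K$ can be realised by an element of $\TdRv(M'')_{\K''}$ for sufficiently large unramified $\K''$'' cannot hold as stated: the source is a finite-dimensional $\K^u$-vector space while the target is a nonzero $K$-vector space (hence uncountable-dimensional over $\K^u$), so the projection is never surjective. What is actually needed is weaker---that the \emph{specific} element $\pi_M(a)$ arising from a chosen de~Rham lift $a\in\TdRv(M)_{\K'}$ lies in the image of $\pi_{M''}$ after enlarging $\K'$---and your appeal to trivialisation of the \'etale part of $M''[p^\infty]$ over $K^{ur}$ does not obviously supply this. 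Trivialisation tells you that the slope-$0$ isocrystal becomes a sum of unit objects, but it does not tell you that the canonical $F$-stable complement $\widetilde{N}_{K'_0}\subset N_{K'_0}$ descends to a $\K'$-rational subspace of $\TdRv(M)_{\K'}$ under the comparison isomorphism, which is what you would need. The paper's proof sweeps this under the rug with the bare assertion ``pullbacks preserve exactness''; your treatment is more honest in flagging the issue, but the resolution is incomplete. Note also (cf.\ Remark~4.1(3)) that only \emph{left} exactness of $G=\widetilde{N}$ is used in the period-pairing formalism, so your argument already secures everything the applications require.
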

\begin{proof}
Taking the functor $\tilde{(.)}$ from an exact sequence of isocrystals over $K_0$ results in an exact sequence of isocrystals with non-zero slopes.
Since $\TdR$ and base change are exact, and pullbacks preserve exactness, the result follows.
\end{proof}

%%%%%%%%%%%%%%%%%%%%%%%%%%%%%%%%%%%%%%%%%%%%%%%%%%%%%
%%
%% Section 5. P-adic subgroup theorem for 1-motives
%%
%%%%%%%%%%%%%%%%%%%%%%%%%%%%%%%%%%%%%%%%%%%%%%%%%%%%%%%%

\section{P-adic subgroup theorem for 1-motives and p-adic period conjectures}

This section develops a $p$-adic analogue of the analytic subgroup theorem in the context of 1-motives with good reduction. Our main result, the $p$-adic subgroup theorem for 1-motives, asserts that for any period class $x$ in the $\Hp$-realization of a 1-motive $M$, there exists a short exact sequence of 1-motives refining the support of $x$ and capturing the annihilator of $x$ within the de Rham realization. This result provides the structural input necessary to prove the depth-2 period conjecture for $\Hpp$, the depth-2 period conjecture for $\Hp$, and the depth-1 period conjecture for $\hp$.

We begin by formulating the $p$-adic subgroup theorem and discussing its relation to the $p$-adic analytic subgroup theorem of Bertrand and Fuchs.

\subsection{Statement and Reformulation of the Subgroup Theorem}

We first set up the linear algebra formalism needed to state the subgroup theorem.

\begin{defn}\label{def: left-right kernel duality pairing}
Let $\fg$ be a finite-dimensional vector space over a field $F$, equipped with a perfect pairing $\langle \cdot,\cdot \rangle \colon \fg \times \fg^\vee \to F$. For any subspace $\fa \subseteq \fg$ and $\fb \subseteq \fg^\vee$, we define the annihilators
\[
\Ann(\fa) := \{ f \in \fg^\vee \mid \langle a, f \rangle = 0 \text{ for all } a \in \fa \}, \quad \Ann(\fb) := \{ u \in \fg \mid \langle u, b \rangle = 0 \text{ for all } b \in \fb \}.
\]
We write $\Ann(u) := \Ann(\{u\})$ for $u \in \fg$.
\end{defn}

By definition and maximality of $\Ann(u)$, we have
\[\Ann\Ann\Ann(u) = \Ann(u).
\]

In what follows, when $x \in \Hp(M)$, we view it as an element of $\Lie(G^\natural)_{\C_p}$ via diagram~\ref{7.2.6}, and interpret $\Ann(x)$ accordingly in $\coLie(G^\natural)_{\C_p}$.

We now state our main structural result.

\begin{thm}[P-adic subgroup theorem for 1-motives]\label{p-adic subgroup theorem for Fontaine pairing}
Let $M$ be a 1-motive over a number field $\K$ with good reduction at $p$, and let $x \in \Hp(M)$. Then there exists an exact sequence
\[
0 \to M_1 \to M^n \to M_2 \to 0,
\]
of 1-motives over a finite extension of $\K$, all with good reduction at $p$, and $n \in \{1,2\}$, such that $x \in \Hp(M_1)$ and $\Ann(x) \subseteq \TdRv(M_2)$.
\end{thm}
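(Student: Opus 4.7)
The plan is to realize the theorem as a motivic consequence of the $p$-adic analytic subgroup theorem of Bertrand--Fuchs, applied to the universal vector extension of $M^n$. Given $x \in \Hp(M)$, I will seek a sub-1-motive $M_1 \subseteq M^n$ whose de Rham realization equals the minimal $\bar{\K}$-subspace $\Ann\Ann(x) \subseteq \TdR(M)^n \otimes_{\K} \bar{\K}$ containing $x$; setting $M_2 := M^n / M_1$ and dualizing then gives $\TdRv(M_2) = \Ann(\TdR(M_1)) \supseteq \Ann(x)$ by the formal identity $\Ann\Ann\Ann = \Ann$ recorded after \cref{def: left-right kernel duality pairing}.

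My first step is to decompose $x$ using the canonical splitting of the middle row of diagram~\eqref{diagram 4.2}: write $x = v + z$ with $v \in V(M) \otimes_{\bar{\K}} \C_p$ and $z \in \hp(M)$. By \cref{definition hp}, after enlarging $\K$ I may write $z = \log_{\sG}(y)$ for a single $y \in \sG(\bar{\K}) \otimes \Q$. The formal smoothness of $G^\natural \to G$ together with the divisibility of the vector group $V(M)$ allows me to lift $y$ to $\tilde{y} \in G^\natural(\bar{\K})$ in the domain of $\log_{G^\natural}$, so that $\tilde{z} := \log_{G^\natural}(\tilde{y}) \in \Lie(G^\natural)_{\bar{\K}}$ projects to $z$ modulo $V(M) \otimes \C_p$. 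Applying the Bertrand--Fuchs theorem to $\tilde{y}$ then produces a connected algebraic subgroup $H \subseteq G^\natural$ defined over a finite extension of $\K$, containing $\tilde{y}$, with $\Lie(H)_{\bar{\K}}$ equal to the smallest $\bar{\K}$-subspace of $\Lie(G^\natural)_{\bar{\K}}$ containing $\tilde{z}$. Via the universal property of the vector extension, $H$ descends to a sub-1-motive $M_1' = [L_1' \to G_1'] \subseteq M$ with $G_1' := \im(H \to G)$ and $L_1' := (u^\natural)^{-1}(H)$; good reduction is preserved by the functoriality of N\'eron models applied to $G^\natural$.

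The component $v \in V(M) \otimes \C_p$ is in general not $\bar{\K}$-rational and need not lie in $V(M_1') \otimes \C_p$, so the argument above places only $z$ (not all of $x$) inside $\Hp(M_1')$: this is the source of the dichotomy $n \in \{1,2\}$ in the statement. When $v = 0$ the case $n = 1$ suffices; when $v \neq 0$, I pass to $M^2$ and construct $M_1 \subseteq M^2$ as a twisted diagonal sub-1-motive that encodes the $\hp$-data of $y$ in the first factor while keeping the full $V(M)$ available in the second to accommodate $v$. The main obstacle will be precisely this combined descent: not every algebraic subgroup of the universal vector extension arises as $G_1^\natural$ for a sub-1-motive, and matching the $V$-component $v$ with the $V$-part of $M_1$ while simultaneously preserving good reduction and the lattice compatibility with $u^\natural \colon L^n \to G^\natural$ requires careful bookkeeping. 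Once these compatibilities are in place, $\TdR(M_1) = \Ann\Ann(x)$ holds essentially tautologically, and $\Ann(x) \subseteq \TdRv(M_2)$ follows.
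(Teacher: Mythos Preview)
Your overall architecture is right, but there are two genuine gaps.

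First, you apply the $p$-adic analytic subgroup theorem to the lifted logarithm $\tilde z$ alone. This produces a subgroup $H\subset G^\natural$ with $\Lie(H)_{\bar\K}=\Ann\Ann(\tilde z)$, which controls $\Ann(\tilde z)$ but \emph{not} $\Ann(x)=\Ann(\tilde z+v)$. In the paper the subgroup theorem is applied to $u$ \emph{together with} the $v$-component (using that the logarithm on the vector group $V(M)$ is the identity, so one feeds in $u$ and a $\bar\K$-spanning set for $v$ as in \cref{general reformulation of p-adic analytic subgroup theroem for algebraic groups}); this yields $H_1\subset G^\natural$ with $\Ann(u+v)=\coLie(H_2)_{\C_p}$ and $v\in V_1\otimes\C_p$ for the vector part $V_1$ of $H_1$. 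Also, your claim that ``via the universal property of the vector extension, $H$ descends to a sub-1-motive'' is not correct: an arbitrary connected subgroup of $G^\natural$ is not of the form $G_1^\natural$. One takes the image $G_1\subset G$, sets $M_1=[L\cap H_1\to G_1]$, and then only obtains an injection $G_1^\natural\hookrightarrow H_1$; consequently one gets the containment $\Ann(x)=\coLie(H_2)_{\C_p}\subseteq\TdRv(M_2)_{\C_p}$ rather than your asserted equality $\TdR(M_1)=\Ann\Ann(x)$.

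Second, and more seriously, your $n=2$ step is not a construction. The phrase ``twisted diagonal sub-1-motive that keeps the full $V(M)$ available in the second factor'' does not specify a sub-1-motive of $M^2$, and there is no sub-1-motive of $M$ whose vector part is all of $V(M)$ other than $M$ itself. The paper's resolution is different and is the real content of the proof: when $v\notin V(M_1)\otimes\C_p$, one passes to the Cartier dual side and builds an auxiliary semi-abelian scheme $N:=G^\vee/(K_1\cap K_2)$, where $K_2=\ker(G^\vee\twoheadrightarrow G_1^\vee)$ and $K_1\subset G^\vee$ is a subgroup with $\Lie(K_1)\subset\Ann(v)$. This forces $v\in\coLie(N)_{\C_p}=V(M_1')_{\C_p}$ while preserving $u\in\hp(N^\vee)$, giving a \emph{second} short exact sequence $0\to M_1'\to M\to M_2'\to 0$ with $x\in\Hp(M_1')$ and $\TdRv(M_2')\subseteq\Ann(x)$. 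The $n=2$ sequence is then simply the direct sum $0\to M_1\oplus M_1'\to M^2\to M_2\oplus M_2'\to 0$. Good reduction is inherited not via N\'eron models of $G^\natural$ but because $\gMi(K)$ is abelian and closed under sub-objects and quotients.
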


The proof relies on the $p$-adic analytic subgroup theorem, originally due to Bertrand \cite{bertrand1985lemmes} and proved in full generality by Fuchs \cite{fuchs2015p}, which serves as the $p$-adic analogue of the classical theorem of W\"ustholz \cite{wustholz1989algebraische}:

\begin{thm}[P-adic analytic subgroup theorem \cite{bertrand1985lemmes, fuchs2015p}]\label{original p-adic analytic subgroup theorem}
Let $G$ be a commutative algebraic group over $\bar{\Q}$, and let $V \subseteq \Lie(G)$ be a non-zero $\bar{\Q}$-linear subspace. If $\gamma \in G(\bar{\Q})_f$ satisfies $\log_G(\gamma) \in V \otimes_{\bar{\Q}} \C_p$ and $\log_G(\gamma) \ne 0$, then there exists an algebraic subgroup $H \subseteq G$ defined over $\bar{\Q}$ such that $\Lie(H) \subseteq V$ and $\gamma \in H(\bar{\Q})$.
\end{thm}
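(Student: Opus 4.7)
The strategy is to follow the approach of Huber--Wüstholz \cite{huber2022transcendence} in the classical setting, replacing Wüstholz's complex analytic subgroup theorem by the $p$-adic analogue of Bertrand and Fuchs. Given $x \in \Hp(M)$, we view it---via the canonical embedding from diagram~\ref{diagram 4.2}, which is canonical by the uniqueness of the Hodge--Tate splitting---as an element of $\Lie(G^\natural)_{\C_p}$. Let $\pi \colon \Lie(G^\natural) \to \Lie(G)$ denote the projection; its value $\pi(x)$ lies in $\hp(M,\bar{\K})$ by construction of $\Hp(M)$. By the very definition of $\hp$, after rescaling $x$ by a positive integer (harmless, since the conclusion is $\Q$-linear and we can rescale the resulting $M_1,M_2$ accordingly), we may write $\pi(x) = \log_G(\gamma)$ for some $\gamma \in \sG(\cO_{K'}) \cap G(\K')$ over a finite extension $\K'/\K$.

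Define the biannihilator
\[
V := \Ann\!\bigl( \Ann(x) \cap \coLie(G^\natural)_{\bar{\K}} \bigr) \cap \Lie(G^\natural)_{\bar{\K}},
\]
i.e., the smallest $\bar{\K}$-linear subspace of $\Lie(G^\natural)$ whose $\C_p$-extension contains $x$. Its projection $V_0 := \pi(V) \subseteq \Lie(G)_{\bar{\K}}$ is the smallest $\bar{\K}$-subspace with $V_0 \otimes \C_p \ni \log_G(\gamma)$. Applying Bertrand--Fuchs (\cref{original p-adic analytic subgroup theorem}) to $G$, $V_0$, and $\gamma$ yields an algebraic subgroup $H \subseteq G$ defined over $\bar{\K}$ with $\gamma \in H(\bar{\K})$ and $\Lie(H) \subseteq V_0$; minimality of $V_0$ forces $\Lie(H) = V_0$. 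Passing to its identity component, $H$ is a sub-semi-abelian scheme of $G$ with good reduction at $p$ (inherited from $G$). Setting $L_0 := u^{-1}(H) \subseteq L$ gives a sub-1-motive $M_0 := [L_0 \to H] \subseteq M$ with good reduction, and by functoriality of the universal vector extension $\TdR(M_0) \hookrightarrow \TdR(M)$ projects onto $V_0$.

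It remains to match $M_0$ with the full datum of $x$. Write $x = x^{(0)} + v$, where $x^{(0)} \in \Lie(G^\natural)_{\bar{\K}}$ is any $\bar{\K}$-rational lift of $\log_G(\gamma)$ and $v \in V(M) \otimes_{\bar{\K}} \C_p$ is the kernel component. If $v \in V(M_0) \otimes \C_p$, then already $x \in \Hp(M_0)$, and we set $M_1 := M_0$, $n := 1$, $M_2 := M/M_1$. Otherwise, pass to $M^2$: exploiting the vector-group structure of $V(M^2) = V(M)^2$ together with the freedom in choosing a lift of $\gamma$ to $G^\natural(\bar{\K})$, we enlarge $M_0$ (embedded via a chosen factor or diagonally in $M^2$) by a $\bar{\K}$-rational sub-vector-group of $V(M)^2$ accommodating $v$, yielding a sub-1-motive $M_1 \subseteq M^2$ of good reduction with $\TdR(M_1) \otimes \C_p = V \otimes \C_p$; take $n := 2$ and $M_2 := M^2/M_1$. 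In both cases, $x \in \Hp(M_1)$ and the inclusion $\Ann(x) \subseteq \TdRv(M_2) \otimes \C_p$ follows from the biduality identifying $\Ann(\Ann(x)) \cap \Lie(G^\natural)_{\bar{\K}}$ with $V = \TdR(M_1)$, combined with exactness of $\TdRv$.

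The principal obstacle is the treatment of the kernel component $v \in V(M) \otimes \C_p$, which is generically transcendental over $\bar{\K}$ and thus falls outside the direct reach of Bertrand--Fuchs (which requires the logarithm of a $\bar{\K}$-rational point). The passage to $M^2$ is introduced precisely to sidestep this: although no $\bar{\K}$-line of $V(M)$ need contain the given $\C_p$-vector, after doubling there are enough $\bar{\K}$-lines in $V(M)^2$---together with the freedom inherent in choosing $\gamma^\natural \in G^\natural(\bar{\K})$ lifting $\gamma$---to align algebraically with $v$, explaining the dichotomy $n \in \{1,2\}$. Secondary but routine verifications include the inheritance of good reduction by $M_1$ and the exactness (as opposed to mere inclusion) of $\TdR(M_1) = V$; both follow from the minimality of $V$ and the standard functorial properties of universal vector extensions established in Section~2.
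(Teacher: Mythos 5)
Your proposal does not address the statement you were asked to prove. The statement is the $p$-adic analytic subgroup theorem of Bertrand and Fuchs itself: a transcendence result about an arbitrary commutative algebraic group $G$ over $\bar{\Q}$, a point $\gamma \in G(\bar{\Q})_f$, and a $\bar{\Q}$-subspace $V \subseteq \Lie(G)$ containing $\log_G(\gamma)$. In the paper this theorem is not proved at all; it is imported from \cite{bertrand1985lemmes} and \cite{fuchs2015p} as the external transcendence input (the $p$-adic analogue of W\"ustholz's analytic subgroup theorem), and any genuine proof would require the machinery of those papers (auxiliary functions, zero/multiplicity estimates on group varieties, $p$-adic analytic interpolation), none of which appears in your write-up.

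What you have actually sketched is a proof of the paper's downstream result, the $p$-adic subgroup theorem for 1-motives (\cref{p-adic subgroup theorem for Fontaine pairing}), and in doing so you explicitly invoke the very theorem under review (``Applying Bertrand--Fuchs (\cref{original p-adic analytic subgroup theorem}) to $G$, $V_0$, and $\gamma$\dots''). Relative to the assigned statement this is circular: the statement is used as a lemma rather than established. Even viewed as a proof of the 1-motive theorem, your treatment of the kernel component $v \in V(M)\otimes\C_p$ diverges from the paper's argument --- the paper handles the case $v \notin V(M_1)_{\C_p}$ by constructing an auxiliary semi-abelian variety $N = G^\vee/(K_1 \cap K_2)$ and dualizing to get $G_1 \hookrightarrow N^\vee \hookrightarrow G$, whereas your appeal to ``enough $\bar{\K}$-lines in $V(M)^2$'' is not a construction and would need to be made precise. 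But the decisive issue is the mismatch: no argument for the Bertrand--Fuchs theorem is offered, and none can be extracted from the formal-group and linear-algebra considerations you use.
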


We begin with the following auxiliary lemma.

\begin{lemma}\label{lemma annihilator for exact sequence}
Let $0 \to \fh \to \fg \xrightarrow{\pi} \fg/\fh \to 0$ be an exact sequence of Lie algebras. Then
\[
\Ann(\fh) = \pi^*((\fg/\fh)^\vee).
\]
\end{lemma}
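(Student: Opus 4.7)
The plan is to treat this as a pure linear-algebra identity on the underlying vector spaces; the Lie-algebra structure plays no role, and I only use that $\fg$ is finite dimensional with the perfect pairing $\langle\cdot,\cdot\rangle\colon\fg\times\fg^\vee\to F$ from \cref{def: left-right kernel duality pairing}. Recall that $\pi^*\colon(\fg/\fh)^\vee\to\fg^\vee$ is the pullback $\pi^*(g)=g\circ\pi$.

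First I would establish the easy inclusion $\pi^*((\fg/\fh)^\vee)\subseteq\Ann(\fh)$ directly: for any $g\in(\fg/\fh)^\vee$ and $h\in\fh$, we have $\langle h,\pi^*(g)\rangle=g(\pi(h))=g(0)=0$, so $\pi^*(g)$ annihilates $\fh$.

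For the reverse inclusion, I would invoke the universal property of the quotient vector space. Given $f\in\Ann(\fh)\subseteq\fg^\vee$, the condition $f|_{\fh}=0$ means $f$ factors (uniquely) through $\pi$, i.e.\ there exists $g\in(\fg/\fh)^\vee$ with $f=g\circ\pi=\pi^*(g)$. Hence $\Ann(\fh)\subseteq\pi^*((\fg/\fh)^\vee)$.

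As a dimension check: surjectivity of $\pi$ forces $\pi^*$ to be injective, so $\dim\pi^*((\fg/\fh)^\vee)=\dim\fg-\dim\fh=\dim\Ann(\fh)$, which is consistent with the equality. There is no substantive obstacle here — this is the standard identification of the dual of a quotient with the annihilator of the kernel, and the only thing to be careful about is keeping the direction of the pairing straight so that $\Ann$ is computed inside $\fg^\vee$ rather than $\fg$.
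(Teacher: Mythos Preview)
Your proof is correct and follows the same approach as the paper: both arguments identify $\Ann(\fh)$ with the set of functionals $f\in\fg^\vee$ whose restriction to $\fh$ vanishes, which by the universal property of the quotient is exactly $\pi^*((\fg/\fh)^\vee)$. The paper compresses this into a single sentence, while you spell out both inclusions and add a (redundant but harmless) dimension check.
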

\begin{proof}
An element $f \in \fg^\vee$ lies in $\Ann(\fh)$ if and only if its restriction to $\fh$ vanishes, i.e., $f \in \ker(\fg^\vee \to \fh^\vee)$, which is precisely $\pi^*((\fg/\fh)^\vee)$.
\end{proof}

This allows us to formulate a more usable version of the p-adic subgroup theorem:

\begin{prop}\label{general reformulation of p-adic analytic subgroup theroem for algebraic groups}
Let $G$ be a connected commutative algebraic group over a number field $\K$, and let $u_1, \dots, u_n \in \log_G(G(\bar{\Q})_f)$ (or $u_i \in \log_{\sG}(\sG(\bar{\Q}))$ if $G$ is semi-abelian). Then there exists an exact sequence
\[
0 \to H_1 \to G \to H_2 \to 0
\]
of connected commutative algebraic groups over a finite extension of $\K$, such that:
\begin{itemize}
    \item $u_i \in \Lie(H_1)_{\C_p}$ for all $i$;
    \item $\Ann(u_1, \dots, u_n) = \coLie(H_2)_{\C_p}$.
\end{itemize}
Moreover, this exact sequence is uniquely determined by these properties.
\end{prop}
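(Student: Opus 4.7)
The plan is to reduce Proposition~5.1 to the \(p\)-adic analytic subgroup theorem (Theorem~5.2) by pinpointing the unique \(\bar{\K}\)-rational subspace of \(\Lie(G)\) adapted to the tuple \((u_1, \dots, u_n)\) and invoking the theorem once for each \(u_i\). Concretely, I would define
\[
V := \bigcap \bigl\{ W \subseteq \Lie(G) \,:\, W \text{ is a } \bar{\K}\text{-subspace with } u_i \in W \otimes_{\bar{\K}} \C_p \text{ for every } i \bigr\}.
\]
Finite-dimensionality and flatness of \(\C_p\) over \(\bar{\K}\) guarantee that \(V\) is itself such a subspace---so it is the minimal one---and unwinding the definitions yields the double-annihilator identity \(V = \Ann(\Ann(u_1, \dots, u_n))\), which via Lemma~5.1 is exactly what will pair \(\coLie(H_2)\) with \(\Ann(u_1,\dots,u_n)\) at the end.

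Writing \(u_i = \log_G(\gamma_i)\) with \(\gamma_i \in G(\bar{\K})_f\), for each \(i\) with \(u_i \neq 0\) I would apply Theorem~5.2 to the subspace \(V\)---note \(u_i \in V \otimes_{\bar{\K}} \C_p\) by construction---to obtain a connected \(\bar{\K}\)-algebraic subgroup \(H^{(i)} \subseteq G\) (passing to the identity component costs only multiplying \(\gamma_i\) by the order of a finite component group, which does not change the Lie algebra) with \(\gamma_i \in H^{(i)}(\bar{\K})\) and \(\Lie(H^{(i)}) \subseteq V\). I would then set
\[
H_1 := \bigl( H^{(1)} + H^{(2)} + \dots + H^{(n)} \bigr)^{0},
\]
a connected commutative algebraic subgroup defined over a finite extension of \(\K\). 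Then \(\Lie(H_1) = \sum_i \Lie(H^{(i)}) \subseteq V\), while each \(u_i \in \Lie(H^{(i)}) \otimes_{\bar{\K}} \C_p \subseteq \Lie(H_1) \otimes_{\bar{\K}} \C_p\), so minimality of \(V\) forces \(V \subseteq \Lie(H_1)\), giving \(\Lie(H_1) = V\). Taking \(H_2 := G/H_1\) and applying Lemma~5.1 then yields \(\coLie(H_2) = \Ann(\Lie(H_1)) = \Ann(V) = \Ann(u_1, \dots, u_n)\), establishing condition~(ii).

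For uniqueness, any connected \(H_1'\) satisfying (i) and (ii) must satisfy \(\Lie(H_1') = V\): condition~(i) forces \(\Lie(H_1') \supseteq V\) by minimality of \(V\), while condition~(ii), unwound via Lemma~5.1, forces \(\Lie(H_1') = \Ann(\Ann(u_1,\dots,u_n)) = V\). In characteristic zero, a connected algebraic subgroup of a fixed algebraic group is determined by its Lie subalgebra, which gives uniqueness of \(H_1\) and hence of \(H_2\). The semi-abelian variant follows verbatim once one observes that \(\sG(\bar{\K}) = \sG(\cO_{\bar{\K}})\) consists of topologically nilpotent elements of \(G(\bar{\K})\) and thus sits inside \(G(\bar{\K})_f\).

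The subtle step is the equality \(\Lie(H_1) = V\). The inclusion \(\subseteq\) is formal, but the reverse is where the arithmetic content of Theorem~5.2 enters: the theorem allows us to prescribe the ambient \(\bar{\K}\)-subspace \(V\) in which each \(\Lie(H^{(i)})\) must lie while still guaranteeing \(\gamma_i \in H^{(i)}(\bar{\K})\), and the built-in minimality of \(V\) is exactly what converts the individual bounds \(u_i \in \Lie(H^{(i)}) \otimes \C_p\) into the single simultaneous equality \(V = \sum_i \Lie(H^{(i)})\). Without this minimality the summed group could easily have Lie algebra strictly smaller than \(V\), which would break condition~(ii).
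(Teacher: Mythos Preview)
Your argument is correct and follows essentially the same route as the paper. Both proofs introduce the double-annihilator subspace (your $V$ coincides with the paper's $\Ann\Ann(u)$), invoke Theorem~5.2 once per $u_i$, sum the resulting subgroups, and read off condition~(ii) via Lemma~5.1; uniqueness is then deduced in both cases from the fact that a connected subgroup in characteristic zero is determined by its Lie algebra. The only organizational difference is that you work with a single $V$ for the whole tuple from the outset, whereas the paper first treats $n=1$ with $V_i=\Ann\Ann(u_i)$ and then combines via $\bigcap_i\Ann(u_i)=\Ann(u_1,\dots,u_n)$; the two constructions produce the same $H_1$ since $\sum_i\Ann\Ann(u_i)=\Ann\Ann(u_1,\dots,u_n)$.
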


\begin{proof}
We begin with the case $n = 1$. Let $u = \log(\gamma)$ for some $\gamma \in G(\bar{\Q})_f$. If $u = 0$, the claim holds with $H_1 = 0$. Otherwise, consider the subspace $V := \Ann\Ann(u) \subset \Lie(G)_{\C_p}$. By the $p$-adic analytic subgroup theorem (\cref{original p-adic analytic subgroup theorem}), there exists a connected algebraic subgroup $H_1 \subset G$ defined over a finite extension $\K'/\K$ such that $u \in \Lie(H_1)_{\C_p} \subset V$ and $\gamma \in H_1(\bar{\Q})$.

Taking annihilators, we obtain
\[
\Ann(u) \subseteq \Ann(\Lie(H_1)) \subseteq \Ann\Ann\Ann(u) = \Ann(u),
\]
so equalities hold throughout, and we conclude $\Ann(u) = \Ann(\Lie(H_1))$. Letting $H_2 := G/H_1$, we obtain the exact sequence
\[
0 \to H_1 \to G \to H_2 \to 0
\]
over a finite extension of $\K$. From the exact sequence of Lie algebras
\[
0 \to \Lie(H_1) \to \Lie(G) \xrightarrow{\pi} \Lie(H_2) \to 0,
\]
\cref{lemma annihilator for exact sequence} yields
\[
\Ann(\Lie(H_1)) = \pi^*\left(\Lie(H_2)^\vee\right) = \coLie(H_2),
\]
which we view as a subspace of $\coLie(G)$ as $\pi^*$ is injective.

Now consider the general case $n > 1$. Apply the above argument to each $u_i$ to obtain subgroups $H_1^{(i)} \subset G$ with $u_i \in \Lie(H_1^{(i)})_{\C_p}$ and $\Ann(u_i) = \coLie(G/H_1^{(i)})_{\C_p}$. Let $H_1 := H_1^{(1)} + \dots + H_1^{(n)}$ be the algebraic subgroup they generate. Then
\[
u_i \in \Lie(H_1)_{\C_p} \quad \text{for all } i,
\]
and
\[
\coLie(G/H_1)_{\C_p} = \bigcap_i \coLie(G/H_1^{(i)})_{\C_p} = \bigcap_i \Ann(u_i) = \Ann(u_1, \dots, u_n),
\]
as the intersection is taken in $\coLie(G)$. The resulting sequence
\[
0 \to H_1 \to G \to H_2 := G/H_1 \to 0
\]
satisfies the required conditions.

To prove uniqueness, suppose another exact sequence
\[
0 \to H_1' \to G \to H_2' \to 0
\]
also satisfies the same properties. Then
\[
\coLie(H_2)_{\C_p} = \Ann(u_1, \dots, u_n) = \coLie(H_2')_{\C_p},
\]
so $\Lie(H_1) = \Lie(H_1')$, and since both $H_1$ and $H_1'$ are connected, it follows that $H_1 = H_1'$.
\end{proof}

\subsection{Proof of the Subgroup Theorem}

We now provide the full proof of the $p$-adic subgroup theorem for 1-motives (\cref{p-adic subgroup theorem for Fontaine pairing}), following the reformulated version of the analytic subgroup theorem in Proposition~\ref{general reformulation of p-adic analytic subgroup theroem for algebraic groups}.

Without loss of generality, we may assume that the morphism $L \to G$ is injective. Indeed, one can write
\[
M = [L' \to 0] \oplus [L/L' \pmod{\text{torsion}} \to G]
\]
in the isogeny category of $\gMi(\K)$, where the second summand has an injective morphism. For $M = [L' \to 0]$, we have $\TdR(L')_{\C_p} = V(L')_{\C_p} = \Hp(L')$, and one can take the exact sequence $0\to L'\to L'^2\to L'\to 0$. Hence, we assume $L \to G$ is injective.

Let $x \in \Hp(M)$. We can write $x = u + v$ with $u \in \hp(M, \bar{\K})$ and $v \in V(M)$. By the construction of $\hp(M, \bar{\K})$ (\cref{definition hp}) there exists $\gamma \in \sG(\bar{\K})$ such that $\log_{\sG}(\gamma) = u$ (up to a rational scalar). Since $G^\natural$ is a vector extension of $G$, we may view $u$ in the image of $\log_{G^\natural}$. Note that the logarithm on $V(M)$ is the identity, and we apply Proposition~\ref{general reformulation of p-adic analytic subgroup theroem for algebraic groups} to $u + v \in \Lie(G^\natural)_{\C_p}$ to obtain an exact sequence
\[
0 \to H_1 \to G^\natural \to H_2 \to 0
\]
over a finite extension of $\K$ such that $u + v \in \Lie(H_1)_{\C_p}$ and $\Ann(u + v) = \coLie(H_2)_{\C_p}$.

By the structure theory of commutative algebraic groups (\cite{conrad_modern_2002}), we have exact sequences
\[
0 \to V_1 \to H_1 \to G_1 \to 0, \quad 0 \to V_2 \to H_2 \to G_2 \to 0,
\]
where $G_i$ is semi-abelian and $V_i$ a vector group. Note that $v \in V_1\otimes\C_p$. This yields the following commutative diagram with exact rows and columns:

\begin{equation}
\begin{tikzcd}
        & 0 \arrow[d]             & 0 \arrow[d]               & 0 \arrow[d]             &   \\
0 \arrow[r] & V_1 \arrow[r] \arrow[d] & V(M) \arrow[r] \arrow[d]     & V_2 \arrow[r] \arrow[d] & 0 \\
0 \arrow[r] & H_1 \arrow[r] \arrow[d] & G^\natural \arrow[r] \arrow[d] & H_2 \arrow[r] \arrow[d] & 0 \\
0 \arrow[r] & G_1 \arrow[r] \arrow[d] & G \arrow[r] \arrow[d]     & G_2 \arrow[r] \arrow[d] & 0 \\
        & 0                       & 0                         & 0                       &  
\end{tikzcd}
\end{equation}

Define $L_1 := L \cap H_1$, and let $L_2 := L / L_1 \pmod{\text{torsion}}$. By construction,
$L_1\to L\to G$ factors through $L_1\to G_1$ and $L\to G\to G_2$ factors through $L_2\to G_2$. This yields an exact sequence
\begin{equation}\label{exact seq 5.2}
0 \to M_1 \to M \to M_2 \to 0,
\end{equation}
with $M_1 = [L_1 \to G_1]$ and $M_2 = [L_2 \to G_2]$. Since $M$ has good reduction at $p$, so do $M_1$ and $M_2$. The exact sequence \ref{exact seq 5.2} lifts to $\cO_K$, where $K$ is a finite extension of $\Q_p$ containing $\K$. By the property of universal vector extensions, the exact sequence \[0\to V_i\to [L_i\to H_i]\to [L_i\to G_i]\to 0 \]
is the push-out of $G\nat_i$ for $i=1,2$. The compositions $G\nat_1\to H_1\to G\nat$ and $G\nat\to G\nat_2\to H_2$ are injective and surjective respectively. So, the maps $G\nat_1\to H_1$ and $G\nat_2\to H_2$ are injective and surjective respectively. Since $H_1$ is a vector extension of $G_1$, we deduce that $u\in\hp(G_1,\bar{\K})$. 

We now distinguish two cases. If $v \in V(M_1)_{\C_p}$, then $x = u + v \in \Hp(M_1)$. The exact sequence \eqref{exact seq 5.2} gives $\TdRv(M_2)_{\C_p} = \coLie(G_2^\natural)_{\C_p} \subseteq \Ann(x)$. However, $G\nat_2\twoheadrightarrow H_2$ is surjective, hence 
\begin{equation}\label{ann(u)=TdR(M2)}
\Ann(u+v)=\coLie(H_2)_{\C_p}\subseteq\coLie(G\nat_2)_{\C_p}\subseteq\TdRv(M_2)_{\C_p},      
\end{equation}
as desired.

Now, assume that $v\notin V(M_1)_{\C_p}$. From \ref{identification V(M) and coLie}, we know that $V(M_1)=\coLie(G\ve_1)$ and $V(M)=\coLie(G\ve)$. Choose a semi-abelian variety $N$ such that
\begin{enumerate}
    \item There exists a surjection $N\onto G\ve_1$ that the quotient map $G\ve\onto G_1\ve$ factors through it. In particular, its Cartier dual $N\ve$ is a subgroup of $G$.
    \item $v\in\coLie(N)_{\C_p}$.
\end{enumerate}
To construct such $N$, define $$N:=G\ve/(K_1\cap K_2),$$ where $K_1$ is a subgroup of $G\ve$ whose Lie algebra is contained in $\Ann(v)$, possibly after extending scalars to \( \C_p \), and $K_2$ is the kernel of $G\ve\onto G\ve_1$. Since $K_1\cap K_2$ is a closed subgroup and $G\ve$ is a semi-abelian scheme, the quotient $N=G\ve / (K_1 \cap K_2)$ must be a semi-abelian scheme. Dualizing, we obtain $G_1 \hookrightarrow N^\vee \hookrightarrow G$, so $u \in \hp(N^\vee)$. 
Therefore, we obtain the diagram 
\begin{equation}\label{7.3.4}
\begin{tikzcd}
0 \arrow[r] & H_1 \arrow[r] \arrow[d]       & G\nat \arrow[r] \arrow[d]                  & H_2 \arrow[r] \arrow[d, two heads] & 0 \\
0 \arrow[r] & G_1 \arrow[d, hook] \arrow[r] & G \arrow[d, Rightarrow, no head] \arrow[r] & G_2 \arrow[d, two heads] \arrow[r] & 0 \\
0 \arrow[r] & N\ve \arrow[r]                & G \arrow[r]                                & G/N\ve \arrow[r]                   & 0
.\end{tikzcd}
\end{equation}
Define $M_1' = [L_1 \to N^\vee]$ and $M_2' = [L_2 \to G/N^\vee]$, yielding the diagram:

\begin{equation}\label{diagram 435}
\begin{tikzcd}
0 \arrow[r] & M_1 \arrow[r] \arrow[d, hook] & M \arrow[r] \arrow[d, Rightarrow, no head] & M_2 \arrow[r] \arrow[d, two heads] & 0 \\
0 \arrow[r] & M_1' \arrow[r]                & M \arrow[r]                                & M_2' \arrow[r]                     & 0
\end{tikzcd}
\end{equation}

Based on the condition imposed on $N$, we have $v\in\coLie(N)=V(M'_1)_{\C_p}$ and $u\in\hp(M'_1)$. Consequently, $x=u+v\in\Hp(M'_1)$, and from the above diagram \ref{diagram 435}, we have
$$
\TdRv(M'_2)_{\C_p}\subseteq\Ann(x)\subseteq\TdRv(M_2)_{\C_p},
$$ where the latter inclusion is as in \ref{ann(u)=TdR(M2)}.
The direct sum $M' := M_1 \oplus M_1'$ and $M'' := M_2 \oplus M_2'$ give an exact sequence
\[
0 \to M' \to M^2 \to M'' \to 0,
\]
with $x \in \Hp(M')$ and $\Ann(x) \subseteq \TdRv(M'')_{\C_p}$. This completes the proof.

\qed

In the course of proving the \( p \)-adic subgroup theorem for 1-motives, we get the following:
\begin{cor}\label{cor 5.6.1}
Let $x \in \Hp(M)$. We can write $x = u + v$ uniquely with $u \in \hp(M, \bar{\K})$ and $v \in V(M) \otimes \C_p$. Then there exists a commutative diagram
\[
\begin{tikzcd}
0 \arrow[r] & M_1 \arrow[r] \arrow[d, hook] & M \arrow[r] \arrow[d, Rightarrow, no head] & M_2 \arrow[r] \arrow[d, two heads] & 0 \\
0 \arrow[r] & M_1' \arrow[r]                & M \arrow[r]                                & M_2' \arrow[r]                     & 0
\end{tikzcd}
\]
with the following properties:
\begin{enumerate}
  \item $u \in \hp(M_1)$,
  \item $x \in \Hp(M_1')$,
  \item $\Ann(x) = \TdRv(M_2)_{\C_p} \supseteq \TdRv(M_2')_{\C_p}$,
  \item If $\Hp(M) = \hp(M)$ (or $x \in \hp(M)$), then \cref{p-adic subgroup theorem for Fontaine pairing} holds with $n = 1$.
\end{enumerate}
\end{cor}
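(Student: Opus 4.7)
The plan is to revisit the construction in the proof of Theorem~\ref{p-adic subgroup theorem for Fontaine pairing}, extracting additional bookkeeping already implicit in the argument. For uniqueness of $x = u + v$, the middle row of Diagram~\ref{diagram 4.2}, namely $0 \to V(M)\otimes\C_p \to \Hp(M) \to \hp(M,\bar{\K}) \to 0$, splits canonically, embedding $\Hp(M)$ into $\Lie(G^\natural)\otimes\C_p = V(M)\otimes\C_p \oplus \Lie(G)\otimes\C_p$ via the universal vector extension sequence. Thus $V(M)\otimes\C_p$ and $\hp(M,\bar{\K})$ are complementary $\Q$-subspaces of $\Hp(M)$, which forces the decomposition $x = u + v$ to be unique.

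To construct the diagram, write $u = \log_{\sG}(\gamma)$ for some $\gamma \in \sG(\bar{\K})$ as in Definition~\ref{definition hp}. Applying Proposition~\ref{general reformulation of p-adic analytic subgroup theroem for algebraic groups} to $u$ inside $\Lie(G)\otimes\C_p$ yields a connected semi-abelian subgroup $G_1 \subseteq G$, defined over a finite extension of $\K$, with $\gamma \in G_1(\bar{\K})$, $u \in \Lie(G_1)\otimes\C_p$, and $\mathrm{Ann}(u) = \coLie(G_2)\otimes\C_p$ for $G_2 := G/G_1$. After reducing to the case $L \hookrightarrow G$ injective (as at the start of the proof of Theorem~\ref{p-adic subgroup theorem for Fontaine pairing}), set $L_1 := L \cap G_1$, $L_2 := (L/L_1)/\mathrm{tors}$, $M_1 := [L_1 \to G_1]$, $M_2 := [L_2 \to G_2]$; this is the top row, and good reduction is inherited from $M$. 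If $v \in V(M_1)\otimes\C_p$, take $M_1' := M_1$ and $M_2' := M_2$; otherwise follow Case~2 of the proof of Theorem~\ref{p-adic subgroup theorem for Fontaine pairing} to construct $N^\vee \supseteq G_1$ inside $G$ with $v \in \coLie(N)\otimes\C_p = V([L_1 \to N^\vee])\otimes\C_p$, by putting $N := G^\vee/(K_1 \cap K_2)$ where $K_1 \subseteq G^\vee$ is any closed subgroup with $\Lie(K_1) \subseteq \mathrm{Ann}(v)$ and $K_2 := \ker(G^\vee \twoheadrightarrow G_1^\vee)$; then $M_1' := [L_1 \to N^\vee]$, $M_2' := [L_2 \to G/N^\vee]$. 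The inclusion $G_1 \hookrightarrow N^\vee$ supplies the vertical arrows.

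Property (1) is immediate because $\gamma \in G_1(\bar{\K}) \cap \sG(\bar{\K})$ lies in the formal $p$-divisible group of $M_1$, so $u \in \hp(M_1)$. Property (2) follows from $u \in \hp(M_1) \subseteq \hp(M_1')$ and $v \in V(M_1')\otimes\C_p$ by construction, giving $x = u + v \in \Hp(M_1')$. For Property (3), dualizing the exact sequence $0 \to M_1^\natural \to M^\natural \to M_2^\natural \to 0$ identifies $\TdRv(M_2)\otimes\C_p$ with the annihilator of $\Lie(M_1^\natural)\otimes\C_p$ inside $\coLie(G^\natural)\otimes\C_p$; the chain $\mathrm{Ann}(x) = \coLie(H_2)\otimes\C_p \subseteq \coLie(G_2^\natural)\otimes\C_p = \TdRv(M_2)\otimes\C_p$ displayed in equation~\eqref{ann(u)=TdR(M2)} (obtained by applying Proposition~\ref{general reformulation of p-adic analytic subgroup theroem for algebraic groups} to $x$ and using that $H_2$ is a quotient of $G_2^\natural$), combined with the reverse inclusion coming from $x \in \Lie(M_1^\natural)\otimes\C_p$ in Case~1, yields the equality; the inclusion $\TdRv(M_2')\otimes\C_p \subseteq \TdRv(M_2)\otimes\C_p$ is dual to the surjection $M_2 \twoheadrightarrow M_2'$. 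Property (4) is immediate: when $x \in \hp(M)$ we have $v = 0 \in V(M_1)\otimes\C_p$, placing us in Case~1 with $M_1' = M_1$ and $M_2' = M_2$, so the top row alone realizes the conclusion of Theorem~\ref{p-adic subgroup theorem for Fontaine pairing} with $n = 1$. The main technical subtlety is the \emph{equality} in Property (3), which requires matching the subgroup produced by Proposition~\ref{general reformulation of p-adic analytic subgroup theroem for algebraic groups} applied to $x$ with the universal vector extension $M_1^\natural$ of a sub-1-motive; the hypothesis $v \in V(M_1)\otimes\C_p$ is exactly what ensures $H_1 = M_1^\natural$, and hence $H_2 = M_2^\natural$.
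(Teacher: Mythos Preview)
Your proposal follows essentially the same approach as the paper: the corollary is extracted directly from the construction in the proof of \cref{p-adic subgroup theorem for Fontaine pairing}, and the paper's own ``proof'' is nothing more than the sentence ``In the course of proving the $p$-adic subgroup theorem for 1-motives, we get the following.''

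One organizational difference worth noting: you build the top row by applying \cref{general reformulation of p-adic analytic subgroup theroem for algebraic groups} to $u$ inside $\Lie(G)$, whereas the paper applies it to $x=u+v$ inside $\Lie(G^\natural)$ and then takes the semi-abelian quotient of $H_1$ to obtain $G_1$. These two constructions do give the same $G_1$---since $v\in V(M)\subset\pi^{-1}(G_1^{\text{from }u})$ and $u\in\Lie(G_1^{\text{from }u})$, one has $x\in\Lie(\pi^{-1}(G_1^{\text{from }u}))$, forcing $H_1\subseteq\pi^{-1}(G_1^{\text{from }u})$ and hence $\pi(H_1)\subseteq G_1^{\text{from }u}$; the reverse inclusion is immediate from $u\in\Lie(\pi(H_1))$---but you invoke equation~\eqref{ann(u)=TdR(M2)} (which is proved for the paper's $G_2^\natural$) without making this identification explicit. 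That is a small gap you should close.

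Your final remark that ``the hypothesis $v\in V(M_1)\otimes\C_p$ is exactly what ensures $H_1=M_1^\natural$'' is correct and is the key to the \emph{equality} in property (3). Note however that in Case~2 the paper's own argument only yields $\TdRv(M_2')_{\C_p}\subseteq\Ann(x)\subseteq\TdRv(M_2)_{\C_p}$, so the equality $\Ann(x)=\TdRv(M_2)_{\C_p}$ as stated in the corollary is only fully justified by the proof in Case~1; this is a feature of the paper's statement rather than a defect in your argument.
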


\subsection{Applications to Period Conjectures}

We now apply the $p$-adic subgroup theorem to prove several depth-specific cases of the period conjectures associated with three main pairings we obtained in \cref{three pairings hp Hp Hpp}. These include the $\Hpp$-period conjecture at depth $2$, the $\Hp$-period conjecture at depth $2$, and the $\hp$-period conjecture at depth $1$.

Let $N$ be a nonzero isocrystal over $K$ with slopes $\{\alpha_1 < \cdots < \alpha_n\}$ occurring with multiplicities $\{\mu_1, \dots, \mu_n\}$. Recall that the Newton number $t_N(N)$ is defined as the $y$-coordinate of the rightmost endpoint of the Newton polygon associated with $N$, and is given by
\[
t_N(N) = \sum_i \mu_i \alpha_i.
\]
On the other hand, the Hodge number $t_H(N)$ is defined by
\[
t_H(N) := \sum_{i \in \Z} i \cdot \dim_K \gr^i(N),
\]
where the grading is taken with respect to the Hodge filtration on $N$.

A filtered isocrystal $N$ over $K$ is said to be weakly admissible if the inequality
\[
t_N(N') \geq t_H(N')
\]
holds for every sub-filtered isocrystal $N' \subset N$, with equality when $N' = N$. Furthermore, any weakly admissible filtered isocrystal is admissible if it is an essential image of $\Dcris$; see \cite{fontaine1994corps} and \cite{fargues_courbes_2018} for more details.

We now turn to the proof of the $\Hpp$-period conjecture at depth $2$. The following lemma provides a key linear algebraic input concerning the generation of an admissible filtered isocrystal by its degree one Hodge component under Frobenius:

\begin{lemma}\label{lemma 5.6.2}
Let $N$ be an admissible filtered isocrystal over $K_0$ with nonzero slopes, and suppose its filtration satisfies
\[
\Fil^i(N) = \begin{cases}
  N & \text{for } i \leq 0, \\
  X & \text{for } i = 1, \\
  0 & \text{for } i \geq 2.
\end{cases}
\]
Then we have:
\[
\sum_{n \geq 0} F^n(X) = N.
\]
\end{lemma}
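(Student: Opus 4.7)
The plan is to set $Y := \sum_{n \geq 0} F^n(X)$ and show that the quotient $N/Y$ must vanish. By construction $Y$ is a $K_0$-subspace of $N$ stable under the Frobenius $F$, hence a sub-isocrystal, and so $N/Y$ inherits the structure of a filtered isocrystal with the quotient filtration.

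The first step is to compute the Hodge invariant of $N/Y$. Since $X \subseteq Y$, the image of $X$ in $N/Y$ is zero, so the induced filtration on $N/Y$ satisfies $\Fil^1(N/Y) = 0$ (and of course $\Fil^0(N/Y) = N/Y$). Consequently
\[
t_H(N/Y) = 0.
\]

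The second step uses weak admissibility to bound the Newton invariant. Since $N$ is admissible, $t_N(N) = t_H(N)$, while the sub-object $Y \subseteq N$ satisfies $t_N(Y) \geq t_H(Y)$. Both $t_H$ and $t_N$ are additive on short exact sequences of filtered isocrystals, so
\[
t_N(N/Y) \;=\; t_N(N) - t_N(Y) \;\leq\; t_H(N) - t_H(Y) \;=\; t_H(N/Y) \;=\; 0.
\]

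The final step is to establish the opposite inequality $t_N(N/Y) \geq 0$ and conclude. Invoking the Dieudonné--Manin classification (after base changing to $K_0$ with algebraically closed residue field if needed), $N$ decomposes into isoclinic pieces $N(\alpha)$ with $\alpha \neq 0$; in the setting of the paper these slopes are moreover in $(0,1]$, since $N$ arises from the contravariant Dieudonné module of a $p$-divisible group. The slopes appearing in the quotient isocrystal $N/Y$ form a sub-multiset of those of $N$, hence are strictly positive. Therefore $t_N(N/Y) \geq 0$, with equality if and only if $N/Y = 0$. Combined with the previous step, $0 \leq t_N(N/Y) \leq 0$, forcing $N/Y = 0$ and hence $Y = N$, as claimed.

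The main subtlety is the last step: controlling the slopes of the quotient $N/Y$ and extracting a sign condition from them. The Dieudonné--Manin decomposition is the crucial tool here, together with the observation that the paper's hypothesis ``nonzero slopes'' is used in its strong form (positive slopes), which is automatic in the intended application where $N \subseteq \Tcrys^\vee(\bar M) \otimes K_0$.
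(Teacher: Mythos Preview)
Your proof is correct and follows essentially the same strategy as the paper's: both arguments set $Y=\sum_{n\ge 0}F^n(X)$, use admissibility of $N$ to obtain the inequality $t_N(N/Y)\le t_H(N/Y)=0$, and then invoke positivity of the slopes of $N/Y$ (inherited from $N$) to force $N/Y=0$. The paper packages the middle step as ``$Y$ is weakly admissible, hence so is $N/Y$'', while you use additivity of $t_N$ and $t_H$ directly, but the content is the same; your remark that ``nonzero slopes'' must be read as ``positive slopes'' (automatic in the paper's application to Dieudonn\'e modules of $p$-divisible groups) is exactly the point the paper uses implicitly.
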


\begin{proof}
We first claim that $Y := \sum_{n \geq 0} F^n(X)$ is weakly admissible. Since $N$ is admissible, any subobject $Y$ satisfies $t_N(Y) \geq t_H(Y)$. Let $\dim X = r$ and $\dim N = s$. The Hodge polygon of $N$ has a horizontal segment of length $s - r$ and a slope $1$ segment of multiplicity $r$. As the filtration of $Y$ is given by
    \begin{equation*}
          \Fil^i(Y)=\begin{cases}
            Y,\,\, i\leq 0\\
            X,\,\, i=1\\
            0,\,\, i\geq 2,
        \end{cases}
    \end{equation*}
the Hodge polygon of $Y$ has horizontal segment of length $\dim Y - r$ and a segment of slope $1$ with multiplicity $r$. Since $N$ and $Y$ have only positive slopes, we get $t_N(Y) \leq t_N(N) = t_H(N) = r$. Therefore, $Y$ is weakly admissible, and so $N / Y$ is weakly admissible. But $N/Y$ would then have only one horizontal segment, \ie
    \begin{equation*}
        \Fil^i(N/Y)=\begin{cases}
            N/Y,\,\, i\leq 0\\
            0,\,\, i\geq 1,
        \end{cases}
    \end{equation*}
which contradicts admissibility unless $N/Y = 0$. Thus $Y = N$.
\end{proof}

\begin{thm}\label{level 2 period cinjecture for Hpp}
The $\Hpp$-period conjecture holds at depth $2$.
\end{thm}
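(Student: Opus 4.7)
The plan is to reduce the vanishing of the $\Hpp$-period to a $\C_p$-valued annihilation condition, apply the $p$-adic subgroup theorem for $1$-motives (\cref{p-adic subgroup theorem for Fontaine pairing}) to produce a short exact sequence of $1$-motives capturing that annihilation, and then lift the sequence back to the refined realizations $\tilde{\Hpp}$ and $\tilde{N}$ using the Frobenius rigidity \cref{Brigt cap Cp(1)}.

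By \cref{colim injective period conjecture}, it suffices to prove depth-$2$ injectivity on $\langle M\rangle$ for each fixed $M\in\gMi(\K)$. So take $x\in\tilde{\Hpp}(M)$ and $\omega\in\tilde{N}(M)$ with $\int^{\Hpp}_x\omega=0$ in $\Bt$; the goal is to show $x\otimes\omega\in R_2(M)$. The first step is to extract the underlying $\C_p$-annihilation: using the canonical $\Gamma_K$-equivariant splitting $\Bt\cong\C_p\oplus\C_p(1)$ (valid since $H^1(\Gamma_K,\C_p(1))=0$ by Tate) together with the commutative diagram \eqref{commutative diagram respect filtration}, the reduction of $\int^{\Hpp}_x\omega$ modulo $\C_p(1)$ is exactly the canonical duality pairing $\langle\pi(x),\omega\rangle\in\C_p$, where $\pi(x)\in\Hp(M)\hookrightarrow\Lie(G^\natural)_{\C_p}$ is the image of $x$ under the natural surjection $\Hpp(M)\twoheadrightarrow\Hp(M)$. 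Hence $\omega\in\Ann(\pi(x))$ in the sense of \cref{def: left-right kernel duality pairing}.

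I then apply \cref{p-adic subgroup theorem for Fontaine pairing} (more precisely its refinement \cref{cor 5.6.1}) to $\pi(x)$, producing over a finite extension of $\K$ a short exact sequence $0\to M_1\to M^n\to M_2\to 0$ of $1$-motives with good reduction, $n\in\{1,2\}$, together with, in the $n=2$ case, the amalgamated diagram displaying the Hodge--Tate decomposition $\pi(x)=u+v$ with $u\in\hp(M_1)$ and $v\in V(M_1')_{\C_p}$; this realizes $\pi(x)\in\Hp(M_1)$ (resp.\ $\Hp(M_1\oplus M_1')$) on the source side and satisfies $\Ann(\pi(x))\subseteq\TdRv(M_2)_{\C_p}$. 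To transfer $\omega$: since $\omega\in\tilde{N}(M)\cap\TdRv(M_2)_{\C_p}=\tilde{N}(M_2)$ by contravariant functoriality of $\tilde{N}$ and the non-zero-slope condition, $\omega$ pulls back from $\tilde{N}(M_2)$. To transfer $x$: pick any lift $\tilde{x}\in\Hpp(M_1)$ (resp.\ $\Hpp(M_1\oplus M_1')$) of $\pi(x)$ along $\Hpp\twoheadrightarrow\Hp$; the difference $x-\tilde{x}$ lies in $\Tp(M^n)\otimes\C_p(1)$ but simultaneously in $\tilde{\Tp}(M^n)\otimes\Brigt$, so \cref{Brigt cap Cp(1)} forces $x=\tilde{x}$ and hence $x\in\tilde{\Hpp}(M_1)$ (resp.\ $\tilde{\Hpp}(M_1\oplus M_1')$).

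Assembling these lifts: in the $n=1$ case, $x\otimes\omega$ is the image of an element of $\tilde{\Hpp}(M_1)\otimes\tilde{N}(M_2)$ via the maps induced by $M_1\hookrightarrow M\twoheadrightarrow M_2$, a depth-$1$ functoriality relation, and hence lies in $R_1(M)\subseteq R_2(M)$. In the $n=2$ case, $x\otimes\omega$ is expressed as a sum of tensors arising from the amalgamated depth-$2$ sequence $0\to M_1\oplus M_1'\to M^2\to M_2\oplus M_2'\to 0$ of \cref{cor 5.6.1}, each summand lying in $R_2(M)$ by the corresponding functoriality relations. The main obstacle is precisely this $n=2$ case: one must track the Hodge--Tate pieces $u$ and $v$ of $\pi(x)$ through both exact sequences \emph{simultaneously}, and repeatedly invoke \cref{Brigt cap Cp(1)} to absorb $\C_p(1)$-corrections that would otherwise obstruct the simultaneous lifts. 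This amalgamation is also the conceptual reason why depth $1$ is in general insufficient for $\tilde{\Hpp}$-periods.
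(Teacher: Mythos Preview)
Your reduction step contains a genuine gap. You claim that projecting $\int^{\Hpp}_x\omega\in\Bt$ to $\C_p$ along $\Bt\twoheadrightarrow\C_p$ yields the duality pairing $\langle\pi(x),\omega\rangle$ with $\pi(x)$ viewed inside $\Lie(G^\natural)_{\C_p}$ via the splitting of diagram~\eqref{diagram 4.2}. But diagram~\eqref{commutative diagram respect filtration} says something different: the projection of $\varpi_M$ to $\Lie(G)\otimes\Bt$ (i.e., modulo $V(M)$) lands in $\Lie(G)\otimes\C_p(1)$. Consequently, after reducing $\Bt\to\C_p$, the $\Lie(G)$-component of $\overline{\varpi_M(x)}$ is \emph{zero}, so $\overline{\varpi_M(x)}\in V(M)\otimes\C_p$. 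Hence $\overline{\int^{\Hpp}_x\omega}=\langle\overline{\varpi_M(x)},\omega\rangle$ only tests $\omega$ against the $V(M)$-part; it is the Coleman-map pairing, not the full pairing with $\pi(x)=u+v$ where $u\in\hp(M)\subset\Lie(G)_{\C_p}$. You therefore cannot conclude $\omega\in\Ann(\pi(x))$, and the input to \cref{p-adic subgroup theorem for Fontaine pairing} is missing.

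This is precisely the obstacle the paper works around with the crystalline--Frobenius machinery you omit. The paper first invokes \cref{lemma 5.6.2} (admissibility forces the Frobenius orbit of $\Fil^1$ to span the nonzero-slope part) to write $\omega=F^n\gamma$ with $\gamma\in\coLie(G)\cap\tilde{N}$. Then, passing through the crystalline pairing via \cref{cor 4.3.2} and using \cref{crystalline pairing and Frobenius} together with the $\Brigt$-coefficients of $x$ (which is exactly why $\tilde{\Hpp}$ is defined with $\Brigt$ rather than $\Bct$), one unwinds $\phi_{\mathrm{cris}}^n$ to obtain $\int^{\varpi}_x\gamma=0$. Only now, because $\gamma\in\Fil^1=\coLie(G)$, does the Hodge-filtration compatibility of \cref{theorem p-adic integration pairing for M} and \cref{comparing parings Hp and Hpp} give $\int^{\varphi}_{\pi(x)}\gamma=0$, hence $\gamma\in\Ann(\pi(x))$. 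The subgroup theorem is applied to $\gamma$, and then $\omega=F^n\gamma$ is recovered in $\tilde{N}(M_2)$ by Frobenius stability. Your direct $\Bt\to\C_p$ shortcut does not see the $\hp$-component $u$ of $\pi(x)$ at all, and no amount of $\C_p(1)$-juggling via \cref{Brigt cap Cp(1)} on the $x$-side can repair this: the problem is on the $\omega$-side, and it requires the Frobenius descent through \cref{lemma 5.6.2}.
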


\begin{proof}
We need to prove that the evaluation map \[\tilde{\cP}^2_{\Hpp}(\gMi(\K))\to\cP_{\Hpp}(\gMi(\K))\]
is injective.
Let $\displaystyle\alpha_i = \int^{\varpi}_{x_i}\omega_i$ be $\Hpp$-periods of 1-motives $M_i$, with $x_i \in \tilde{\Hpp}(M_i)$ and $\omega_i \in \tilde{N}(M_i)$. Assume $\sum^n_{i=1} c_i\alpha_i = 0$ for some $c_i \in \K^u$. As it is shown in \cref{prop 4.1}, a linear combination of $\Hpp$-periods is again a $\Hpp$-period. More precisely, we can write
\[
0 = \int^{\Hpp}_x\omega, \quad \text{where } x = \sum^n_{i=1} x_i \in \tilde{\Hpp}(\bigoplus M_i),\quad \omega = \sum^n_{i=1} c_i\omega_i \in \tilde{N}(\bigoplus M_i).
\]
Let $M = \bigoplus M_i$; then there exists a finite extension of $\K$ over which $M$ is defined and has good reduction at $p$. We need to show that $(x\otimes\omega)_{\tilde{\cP}_{\Hpp}^2(M)}=0$. Throughout, we retain the notation introduced in \eqref{eq 6.2.8}.
Since $\omega \in \tilde{N}(M)$, there exists an isocrystal $N_{K_0'}$ defined over a finite unramified extension $K'_0$ such that $\omega \in \tilde{N}_{K_0'} \cap \TdRv(M)_{\K'}$ and $\tilde{N}_{K_0'} \subseteq \Tcrysv(\bar{M}) \otimes_{\W(k')} K'$. But $\Tcrysv(\bar{M})\otimes_{\W(k')}K'\cong \TdRv(M)_{K'}$ is a filtered isomorphism as discussed in \cref{cor: Hodge filtration and tangent space on Dieudonne module}, which induces the following filtration on $\tilde{N}_{K'_0}$ that is identical to the filtration induced as a subobject of $N_{K'_0}$.
\[
\Fil^i(\tilde{N}_{K_0'}) = \begin{cases}
\TdRv(M)_{K'} \cap \tilde{N}_{K_0'} & i \leq 0 \\
\coLie(G)_{K'} \cap \tilde{N}_{K_0'} & i = 1 \\
0 & i \geq 2
\end{cases}
\]
By \cref{lemma 5.6.2}, $\omega = F^n(\gamma)$ for some $\gamma \in \coLie(G) \cap \tilde{N}_{K_0'}$ and some positive integer $n$.

Since $x \in \tilde{\Hpp}(M)$, we may write $x = \sum b_i \otimes x_i$ 
with $x_i \in \tilde{\Tp}(M)$ and $b_i \in \Brigt=\bigcap_{m=1}^\infty \phi_{\mathrm{cris}}^m(\Bct)$. For each $b_i$, there exists $b_i' \in \Bct$ such that $\phi_{\mathrm{cris}}^n(b_i') = b_i$. Since $\phi_{\mathrm{cris}}$ is injective, it follows that $b_i' \in \Brigt$.

By \cref{cor 4.3.2}, we have
\[
\int^{\varpi}_x \omega = \int^{\mathrm{cris}}_x \omega = 0.
\]
Now consider
\[
\int^{\mathrm{cris}}_x \omega 
= \int^{\mathrm{cris}}_{b_1 \otimes x_1 + \dots + b_m \otimes x_m} F^n(\gamma)
= \sum_i \int^{\mathrm{cris}}_{x_i} b_i \otimes F^n(\gamma)=
\sum_i \int^{\mathrm{cris}}_{x_i} (\phi_{\mathrm{cris}}^n \otimes F^n)(b_i' \otimes \gamma).
\]

Since $F$ acts $\sigma$-semilinearly on $\tilde{N}_{K_0'}$ and $\phi_{\mathrm{cris}}$ $\phi_{cris}$ is a lift of Frobenius $\sigma$ over $K_0$, the map
\[
F \otimes \phi_{\mathrm{cris}} \colon N \otimes_{K_0} \Bct \longrightarrow N \otimes_{K_0} \phi_{\mathrm{cris}}(\Bct)
\]
is bijective and $\sigma$-semilinear. Therefore, by \cref{crystalline pairing and Frobenius}, we obtain
\[
\int^{\mathrm{cris}}_{x_i} (\phi_{\mathrm{cris}}^n \otimes F^n)(b_i' \otimes \gamma)
= \phi_{\mathrm{cris}}^n(b_i') \cdot \phi_{\mathrm{cris}}^n\left( \int^{\mathrm{cris}}_{x_i} \gamma \right)
= \phi_{\mathrm{cris}}^n\left( b_i' \cdot \int^{\mathrm{cris}}_{x_i} \gamma \right).
\]
Hence,
\[
\int^{\mathrm{cris}}_x \omega = \phi_{\mathrm{cris}}^n \left( \sum_i \int^{\mathrm{cris}}_{b_i' \otimes x_i} \gamma \right)
= \phi_{\mathrm{cris}}^n \left( \int^{\mathrm{cris}}_{\sum b_i' x_i} \gamma \right).
\]

Since $\phi_{\mathrm{cris}}$ is injective on $\Bcris$, it follows that
\[
0 = \int^{\mathrm{cris}}_{\sum b_i' x_i} \gamma
= \int^{\mathrm{cris}}_{\sum \phi^n(b_i') x_i} \gamma
= \int^{\mathrm{cris}}_x \gamma = \int^{\varpi}_x \gamma,
\]
where the second equality uses Frobenius-equivariance of crystalline integration \eqref{crystalline integration map}, and the last equality follows from \cref{cor 4.3.2}.

Since $\gamma \in \coLie(G)$, \cref{theorem p-adic integration pairing for M} implies
\[
\int^{\varpi}_x \gamma = \int^{\varphi}_x \gamma = 0.
\]
By \cref{comparing parings Hp and Hpp}, we have
\[
\int^{\varphi}_x \gamma = \int^{\varphi}_{\pi(x)} \gamma,
\]
with $\pi(x) \in \Hp(M)$. We now apply the $p$-adic subgroup theorem (\cref{p-adic subgroup theorem for Fontaine pairing}) to obtain an exact sequence
\begin{equation}\label{exact seq in proof of period conjecture}
0 \longrightarrow M_1 \longrightarrow M^n \longrightarrow M_2 \longrightarrow 0
\end{equation}
of $1$-motives over a finite extension of $\K'$, with $M_1$, $M_2$ admitting good reduction at $p$, $\pi(x) \in \Hp(M_1)$, $n \in \{1,2\}$, and $\Ann(\pi(x)) \subset \TdRv(M_2)_{\C_p}$.

It remains to show that $\omega \in \tilde{N}(M_2)$ and $x \in \tilde{\Hpp}(M_1)$. Since $\Ann(\pi(x)) \subset \TdRv(M_2)_{\C_p}$, we have $\gamma \in \TdRv(M_2)_{\C_p}$. But $\gamma \in \coLie(G)_{K'} \cap \tilde{N}_{K_0'}$, and it has non-zero slopes and the same holds for every power $F^n(\gamma)$, so $F^n(\gamma) = \omega \in \tilde{N}(M_2)$.

We can obtain an exact commutative diagram:
\[
\begin{tikzcd}
0 \arrow[r] & \Tp(M_1)\otimes\C_p(1) \arrow[d, hook] \arrow[r] & \Tp(M_1)\otimes\Bt \arrow[d, hook] \arrow[r] & \Tp(M_1)\otimes\C_p \arrow[d, hook] \arrow[r] & 0 \\
0 \arrow[r] & \Tp(M)\otimes \C_p(1) \arrow[r]                  & \Tp(M)\otimes\Bt \arrow[r]                   & \Tp(M)\otimes\C_p \arrow[r]                   & 0
\end{tikzcd}
\]
We can regard $x\in \Tp(M)\otimes\Brigt\into\Tp(M)\otimes\Bct\into\Tp(M)\otimes\Bt$, and we view $\pi(x)$ as an element in $\Tp(M_1)\otimes\C_p$. By the constructions of $\Hpp$ and $\tilde{\Hpp}$ (see \cref{definition Hpp} and \cref{definition of tilde of Hpp}), to show that $x\in\tilde{\Hpp}(M_1)$, it suffices to show that the image of $x$ lies in $\Tp(M_1)\otimes\Bt$. Fix a basis $\{v_i\}$ for $\Tp(M_1)\otimes\Bt$ and extend it to a basis $\{v_i, u_i\}$ for $\Tp(M)\otimes \Bt$. We can write 
\[
x=\sum_i b_iv_i + b'_iu_i, \text{ for some } b_i, b'_i\in \Brigt.
\]
Using the commutative diagram above, we have $x=v+k$ for some $v\in\Tp(M_1)\otimes\Bt$ and some $k\in\Tp(M)\otimes\C_p(1)$. The coefficient of $u_i$ in the representation of $k$ must be $b'_i$, as it is zero in the representation of $v$. Since $k\in\Tp(M)\otimes\C_p(1)$, \cref{Brigt cap Cp(1)} implies that $b'_i=0$. It follows that $x=\sum_i b_iv_i\in \Tp(M_1)\otimes\Brigt$, and thus $x\in\tilde{\Hpp}(M_1)$. 

We can now conclude that
\[
(x \otimes \omega)_{\tilde{\cP}^2_{\Hpp}} = 0 \quad \text{in } \tilde{\cP}^2_{\Hpp}(M),
\]
and therefore, by \cref{colim injective period conjecture}(2), the evaluation map
\[
\tilde{\cP}^2_{\Hpp}(\gMi(\K)) \to \cP_{\Hpp}(\gMi(\K))
\]
is bijective.

\end{proof}

\begin{thm}\label{depth 2 and 1 period cinjecture for Hp and hp}
The $\Hp$-period conjecture holds at depth~$2$, and the $\hp$-period conjecture holds at depth~$1$.
\end{thm}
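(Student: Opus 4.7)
The strategy will closely parallel that of \cref{level 2 period cinjecture for Hpp}, but without the Frobenius descent used to pass from $\Bt$ down to $\coLie(G)$. The $\Hp$- and $\hp$-pairings already take values in $\C_p(1) \subset \Bt$ and their differentials already lie in $\coLie(G)_{\bar{\K}}$, so the $p$-adic subgroup theorem (\cref{p-adic subgroup theorem for Fontaine pairing}) can be applied directly without a Frobenius preparation step.

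For the depth-$2$ conjecture for $\Hp$, I will start with a $\bar{\Q}$-linear relation $\sum_{i=1}^{r} c_i \int^{\Hp}_{x_i}\omega_i = 0$ among $\Hp$-periods of $1$-motives $M_i\in\gMi(\K)$. Following the reassembly used in the proof of \cref{level 2 period cinjecture for Hpp} (together with \cref{prop 4.1}), I collapse the relation into a single vanishing period $\int^{\phi}_x \omega = 0$, where $M := \bigoplus_i M_i$, $x := (x_i) \in \Hp(M)$, and $\omega := \sum_i c_i \omega_i \in \coLie(G)_{\bar{\Q}}$, with $G$ the semi-abelian part of $M$. The vanishing translates exactly to $\omega \in \Ann(x) \subset \coLie(G^{\natural})_{\C_p}$. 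Applying \cref{p-adic subgroup theorem for Fontaine pairing} then produces an exact sequence
\[
0 \to M_1 \to M^n \to M_2 \to 0
\]
of $1$-motives of good reduction at $p$, with $n \in \{1,2\}$, $x \in \Hp(M_1)$, and $\Ann(x) \subset \TdRv(M_2)_{\C_p}$.

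The remaining technical step is to descend the datum $\omega \in \TdRv(M_2)_{\C_p} \cap \coLie(G)^n_{\bar{\Q}}$ to $\coLie(G_2)_{\bar{\Q}}$, where $G_2$ denotes the semi-abelian part of $M_2$. This will follow from two faithful-flatness observations inside $\TdRv(M)^n$: first, $\TdRv(M_2)_{\C_p} \cap \TdRv(M)^n_{\bar{\Q}} = \TdRv(M_2)_{\bar{\Q}}$, placing $\omega$ in $\TdRv(M_2)_{\bar{\Q}}$; second, $\TdRv(M_2) \cap \coLie(G)^n = \coLie(G_2)$ (from the commutative diagram between universal vector extensions and semi-abelian parts), which combined with $\omega \in \coLie(G)^n_{\bar{\Q}}$ yields $\omega \in \coLie(G_2)_{\bar{\Q}}$. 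Together with $x \in \Hp(M_1)$, the exact sequence with $n \leq 2$ above then witnesses $(x \otimes \omega) = 0$ in $\tilde{\cP}^2_{\Hp}(M)$. Invoking \cref{colim injective period conjecture}(2) concludes injectivity at depth $2$.

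For the $\hp$-period conjecture at depth $1$, I will run the identical argument, noting the crucial extra input: since each $x_i \in \hp(M_i)$, the combined class $x$ lies in $\hp(M) \subset \Hp(M)$. By \cref{cor 5.6.1}(4), the $p$-adic subgroup theorem then applies with $n = 1$, so the resulting exact sequence is $0 \to M_1 \to M \to M_2 \to 0$ (a single copy of $M$), and the vanishing element $(x \otimes \omega)$ lives in $\tilde{\cP}^1_{\hp}(M)$. The descent of $\omega$ to $\coLie(G_2)_{\bar{\Q}}$ is identical. The main obstacle I expect is not the subgroup theorem itself (already in place), but the careful coefficient descent: one must check that the inclusion $\omega \in \Ann(x) \cap \coLie(G)^n_{\bar{\Q}}$, interpreted inside $\TdRv(M)^n \otimes_{\bar{\Q}} \C_p$, really forces $\omega$ into the smaller subspace $\coLie(G_2)_{\bar{\Q}}$ rather than only into $\TdRv(M_2)_{\bar{\Q}}$, since the $\Hp$- and $\hp$-pairings are defined with $G$-functor equal to $\coLie(G)_{\bar{\Q}}$ rather than all of $\TdRv(M)_{\bar{\Q}}$.
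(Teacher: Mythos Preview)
Your proposal is correct and follows essentially the same route as the paper's own proof. The paper likewise reduces to a single vanishing period $\int^{\Hp}_x\omega=0$, applies \cref{p-adic subgroup theorem for Fontaine pairing}, and then uses the commutative diagram
\[
\begin{tikzcd}
0 \arrow[r] & \coLie(G_2) \arrow[r] \arrow[d, hook] & \coLie(G)^n \arrow[r] \arrow[d, hook] & \coLie(G_1) \arrow[r] \arrow[d, hook] & 0 \\
0 \arrow[r] & \coLie(G^\natural_2) \arrow[r]        & \coLie(G^\natural)^n \arrow[r]        & \coLie(G^\natural_1) \arrow[r]        & 0
\end{tikzcd}
\]
to conclude that $\omega\in\coLie(G^\natural_2)\cap\coLie(G)^n_{\bar{\K}}$ forces $\omega\in\coLie(G_2)_{\bar{\K}}$; this is exactly your ``second faithful-flatness observation'', and the paper treats the descent from $\C_p$ to $\bar{\K}$ implicitly where you spell it out. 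The $\hp$ case is handled identically in both, via \cref{cor 5.6.1}(4) giving $n=1$.
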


\begin{proof}
As in the proof of \cref{level 2 period cinjecture for Hpp}, a general linear relation among $\Hp$-periods reduces to a single equation of the form
\[
\int^{\Hp}_x \omega = 0,
\]
for some $x \in \Hp(M)$ and $\omega \in \coLie(G)_{\bar{\K}}$. We assume such a relation and apply the $p$-adic subgroup theorem for motives (\cref{p-adic subgroup theorem for Fontaine pairing}) to $x \in \Hp(M)$. This yields an exact sequence
\begin{equation}\label{6.3.2 exact seq}
0 \to M_1 \to M^n \to M_2 \to 0
\end{equation}
of $1$-motives over a finite extension of $\K$, with good reduction at $p$, such that $x \in \Hp(M_1)$ and $\Ann(x) \subseteq \TdRv(M_2)_{\C_p} = \coLie(G^\natural_2)_{\C_p}$. This implies that $\omega \in \coLie(G^\natural_2)$.

The exact sequence \eqref{6.3.2 exact seq} induces the commutative diagram
\[
\begin{tikzcd}
0 \arrow[r] & \coLie(G_2) \arrow[r] \arrow[d, hook] & \coLie(G)^n \arrow[r] \arrow[d, hook] & \coLie(G_1) \arrow[r] \arrow[d, hook] & 0 \\
0 \arrow[r] & \coLie(G^\natural_2) \arrow[r]        & \coLie(G^\natural)^n \arrow[r]        & \coLie(G^\natural_1) \arrow[r]        & 0,
\end{tikzcd}
\]
and since $\omega \in \coLie(G^\natural_2) \cap \coLie(G)^n_{\bar{\K}}$, it follows that $\omega \in \coLie(G_2)_{\bar{\K}}$. Thus, in the space $\tilde{\cP}^2_{\Hpp}(M)$, we have
\[
(x \otimes \omega)_{\tilde{\cP}^2_{\Hpp}(M)} = 0.
\]
This completes the proof of the $\Hp$-period conjecture at depth~$2$.

For the $\hp$-period conjecture, the proof proceeds similarly. If $x \in \hp(M)$, then by \cref{cor 5.6.1}, in the exact sequence \eqref{6.3.2 exact seq} we must have $n = 1$. Therefore, the evaluation map
\[
\tilde{\cP}^1_{\hp}(\gMi) \to \cP_{\hp}(\gMi)
\]
is bijective.
\end{proof}

\begin{cor}
All relations among the $\hp$-periods of $\gMi$ are induced by bilinearity and functoriality. More precisely, the evaluation map
\[
\tilde{\cP}_{\hp}(\gMi) \to \cP_{\hp}(\gMi)
\]
is bijective.
\end{cor}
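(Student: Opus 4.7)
The plan is to derive this corollary as a direct consequence of the depth~1 $\hp$-period conjecture established in \cref{depth 2 and 1 period cinjecture for Hp and hp}, combined with the general formal machinery developed in Section~4. There is essentially no new technical work required; the corollary repackages the depth~1 conjecture in terms of the full formal period space.

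The argument proceeds in three steps. First, I would invoke \cref{depth 2 and 1 period cinjecture for Hp and hp}, which asserts that the evaluation map
\[
\widetilde{\mathcal{P}}^1_{\hp}(\gMi) \longrightarrow \mathcal{P}_{\hp}(\gMi)
\]
is a bijection. By \cref{colim injective period conjecture}(2), this is equivalent to the statement that for each individual $M \in \gMi$, the depth~1 evaluation map $\widetilde{\mathcal{P}}^1_{\hp}(M) \to \mathcal{P}_{\hp}(\langle M \rangle)$ is injective. Second, for each such fixed $M$, I would apply \cref{cor: P1=P for period conjecture}, which equates the injectivity of the depth~1 evaluation map with the injectivity of the full evaluation map $\widetilde{\mathcal{P}}_{\hp}(M) \to \mathcal{P}_{\hp}(\langle M \rangle)$; this collapses the stratification so that $\widetilde{\mathcal{P}}_{\hp}(M) \cong \widetilde{\mathcal{P}}^1_{\hp}(M) \cong \mathcal{P}_{\hp}(\langle M \rangle)$. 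Third, I would pass from individual objects to the whole category via \cref{colim injective period conjecture}(1), concluding that the global evaluation map $\widetilde{\mathcal{P}}_{\hp}(\gMi) \to \mathcal{P}_{\hp}(\gMi)$ is bijective.

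Surjectivity is automatic from the construction of the evaluation map, as noted in \cref{space of formal H-periods}, so only injectivity requires justification, and this is what the chain above provides. There is no real obstacle here: the entire content is packaged into the depth~1 result already proved, and the present statement simply reinterprets it using the equivalence between "period conjecture at depth 1" and "all relations come from bilinearity and functoriality" recorded in \cref{cor: P1=P for period conjecture}. The only minor bookkeeping is to verify that the colimit arguments of \cref{colim injective period conjecture} apply in both directions (over $X$ and across depths), but this is immediate since $\gMi$ is exhausted by the full abelian subcategories $\langle M \rangle$ and since the depth filtration stabilizes under the injectivity hypothesis.
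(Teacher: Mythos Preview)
Your proposal is correct and follows essentially the same approach as the paper. The paper's own proof is the one-line statement ``This follows directly from \cref{depth 2 and 1 period cinjecture for Hp and hp} and \cref{cor: P1=P for period conjecture}''; your argument simply unpacks this by making explicit the passage through individual objects via \cref{colim injective period conjecture}, which is the natural way to bridge the per-object formulation of \cref{cor: P1=P for period conjecture} with the categorical statement.
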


\begin{proof}
This follows directly from \cref{depth 2 and 1 period cinjecture for Hp and hp} and \cref{cor: P1=P for period conjecture}.
\end{proof}

\begin{remark}\label{remark 6.3.2}
Let $\alpha = \int^{\Hpp}_x \omega = 0$ be a vanishing $\Hpp$-period of $M$, where $x \in \tilde{\Hpp}(M)$ and $\omega \in \tilde{N}(M)$. Let $\pi\colon \tilde{\Hpp}(M) \to \Hp(M)$ denote the canonical projection. Then, combining the above argument with \cref{cor 5.6.1}, we see that if $\pi(x) \in \hp(M)$, then
\[
(x \otimes \omega)_{\tilde{\cP}^1_{\Hpp}(M)} = 0,
\]
which implies the existence of an exact sequence
\[
0 \to M_1 \to M \to M_2 \to 0
\]
such that $x \in \tilde{\Hpp}(M_1)$ and $\omega \in \tilde{N}(M_2)$. In particular, if $\Hp(M) = \hp(M)$, then the evaluation map
\[
\tilde{\cP}^1_{\Hpp}(M) \to \cP_{\Hpp}\langle M \rangle
\]
is injective, \ie all relation among $\Hpp$-periods of $M$ are induced by bilinearity and functoriality. Thus
\[
\tilde{\cP}^1_{\Hpp}(M) = \tilde{\cP}^2_{\Hpp}(M) = \cP_{\Hpp}\langle M \rangle.
\]
The same result applies to $\Hp$-periods.
\end{remark}

\begin{remark}\label{remark Hpp period does not hold at depth 1}
In general, we have $\tilde{\cP}^2_{\Hpp}(\gMi) \neq \tilde{\cP}^1_{\Hpp}(\gMi)$ and likewise $\tilde{\cP}^2_{\Hp}(\gMi) \neq \tilde{\cP}^1_{\Hp}(\gMi)$ (see \cref{example M[LtoGm]} in the next section). Therefore, by \cref{cor: P1=P for period conjecture}, the map
\[
\tilde{\cP}_{\Hpp}(\gMi) \to \cP_{\Hpp}(\gMi)
\]
is not injective. In other words, not all relations among $\Hpp$-periods (or $\Hp$-periods) are accounted for by bilinearity and functoriality. Rather, as shown in \cref{level 2 period cinjecture for Hpp} and \cref{depth 2 and 1 period cinjecture for Hp and hp}, all relations among the $\Hpp$-periods ($\Hp$-periods, respectively) are precisely captured by the depth-$2$ formal spaces $\tilde{\cP}^2_{\Hpp}$ (and $\tilde{\cP}^2_{\Hp}$, respectively).
\end{remark}

\begin{remark}
If one attempts to pair vectors in \( \Hp(M) \) with elements of \( \tilde{N}(M) \) or the de Rham realization \( \TdR(M) \) via p-adic integration map $\varpi_M$, the fact that \( \Hp(M) \) does not embed naturally into \( \TdR(M) \) gives rise to numerous \( p \)-adic period relations that cannot be captured by the methods developed in this paper.
\end{remark}

%%%%%%%%%%%%%%%%%%%%%%%%%%%%%%%%%%%%%%%%%%%%%%%
%%
%% Section 6. Examples
%%
%%%%%%%%%%%%%%%%%%%%%%%%%%%%%%%%%%%%%%%%%%%%%%%

\section{Examples}\label{section: examples}

We illustrate our theory through two classes of examples: explicit $1$-motives and those associated to algebraic varieties.

\subsection{Explicit Motives}\label{subsec: explicit motives}

We begin with concrete $1$-motives defined over a number field $\K$ with good reduction at $p$.

\begin{example}[Periods of 0-motives]
Let $M = [L \to 0]$ be a $1$-motive with good reduction at $p$, where $r = \mathrm{rank}(L)$. Then:
\begin{itemize}
    \item The Hodge filtration of $L$ is given by:
    \[
    0 \to V(L) \to V(L) \to 0 \to 0.
    \]
    \item The crystalline realization is $\Tcrysv(\bar{M}) = \D(\bar{M}[p^\infty]) = \D((\underline{\Q_p/\Z_p}_k)^r) = (1_{FD})^r$, where $1_{FD}$ denotes the unit Dieudonné module equipped with $F=\sigma$ and the filtration
    \[
    \Fil^i\W(k)=\begin{cases}
    \W(k), & i\leq 0\\ 0,& i>0.
    \end{cases}
    \] Thus, all slopes are zero.
    \item $\tilde{N}(M) = 0$.
    \item $\hp(M) = 0$.
    \item $\Hp(M) = V(L)_{\C_p}$.
    \item All spaces of periods vanish:
    \[
    \cP_{\Hpp}(M) = \cP_{\Hp}(M) = \cP_{\hp}(M) = 0.
    \]
\end{itemize}
\end{example}

\begin{example}[Torus]
Let $M = [0 \to \G_m]$ be a $1$-motive with good reduction. Then:
\begin{itemize}
    \item The Hodge filtration of $M$ is:
    \[
    0 \to 0 \to \Lie(\G_m) \to \Lie(\G_m) \to 0,
    \]
    since $V(M) = \Ext^1(\G_m, \G_a)\ve = 0$.
    \item We have $\hp(M) = \log(\mup(\bar{\Q})) \otimes \Q = \log_{\mup}((1 + \fm_{\ocp}) \cap \bar{\Q}) \otimes_{\Z} \Q$ (recall \cref{logarith of p-divisible multiplicative} and \cref{definition hp}).
    \item Since $\mup$ is connected, we have $\Tp(M) = \tilde{\Tp}(M)$.
    \item $\Tcrysv(\bar{M}) = \D(\bar{M}[p^\infty]) = \D(\mup) = \Delta_1$, so all slopes are non-zero.
    \item As $V(M) = 0$, we have $\Hp(M) = \hp(M)$. By \cref{remark 6.3.2}, this implies
    \[
    \tilde{\cP}^1_{\Hpp}(M) = \tilde{\cP}^2_{\Hpp}(M) \cong \cP_{\Hpp}\langle M \rangle,
    \]
    i.e., all relations among $\Hpp$-periods of a torus are induced by bilinearity and functoriality. The same holds for $\Hp$-periods.
    \item As $\hp(M) = \Hp(M)$, we also have $\cP_{\hp}(M) = \cP_{\Hp}(M)$. Moreover, since all differential forms in $\coLie(\G_m)$ have non-zero slopes, we obtain
    \[
    \int^{\Hpp}_{x} \omega = \int^{\Hp}_{\pi(x)} \omega
    \]
    for any $\omega \in \tilde{N}(M) \subseteq \coLie(\G_m)_{\K^u}$ and $x \in \tilde{\Hpp}(M)$. This equality follows from \cref{comparing parings Hp and Hpp}, implying that $\cP_{\Hpp}(M) \subseteq \cP_{\Hp}(M)$.
\end{itemize}
\end{example}

\begin{example}[Kummer motive]\label{example M[LtoGm]}
Let $M = [\Z \to \G_m]$, $1 \mapsto g$, be a Kummer motive. If $g$ is a root of unity, the canonical exact sequence
\begin{equation}\label{exact seq in examples}
0 \to \G_m \to M \to \Z \to 0
\end{equation}
splits. Then:
\begin{itemize}
    \item the vector extension $G\nat$ is an extension of $\G_m$ by $\Hom(\Z,\G_a)=\G_a$, and by the structure theory of algebraic groups, we have $G\nat=\G_a\times\G_m$.
    \item Applying $\Tcrysv$ to \eqref{exact seq in examples} yields a split exact sequence
    \[
    0 \to 1_{FD} \to \Tcrysv(\bar{M}) \to \Delta_1 \to 0,
    \]
    giving slopes $0$ and $1$ of multiplicity $1$. Hence, $\tilde{N}(M)$ is supported in the filtered isocrystal associated with $\Delta_1$. 
    \item If the sequence \ref{exact seq in examples} splits, taking formal p-divisible groups yields a split exact sequence. Therefore, we have \[\hp(M)=\hp(\G_m)\oplus\hp(\Z)=\hp(\G_m),\]
since $\hp(\Z)=0$. In fact, this equality holds even if $g$ is not a root of unity. This is because $\Z[p^{\infty}]$ is \'etale, and consequently, $\hp(M)=\hp(\G_m)$.
    \item $V(M) = V(\Z) = \G_a$, hence $\Hp(M) = \hp(M) \oplus V(M)_{\C_p} = \hp(\G_m) \oplus V(\Z)_{\C_p}$.
    \item Consequently, $\cP_{\hp}\langle M \rangle = \cP_{\hp}\langle \G_m \rangle$, and $\cP_{\Hp}\langle \G_m \rangle \subset \cP_{\Hp}\langle M \rangle$.
\end{itemize}

Now, assume $\displaystyle\int^{\Hpp}_{x} \omega$ is an $\Hpp$-period of $M$, with $\omega \in \tilde{N}(M) \cap \coLie(\G_m)_{\bar{\K}}$ and $x \in \tilde{\Hpp}(M)$ such that $\pi(x) \in V(\Z)_{\C_p} \subset \Hp(M)$ and $x, \omega, \pi(x) \neq 0$. Then, by \cref{comparing parings Hp and Hpp},
\[
\int^{\Hpp}_{x} \omega = \int^{\Hp}_{\pi(x)} \omega = 0.
\]
The last equality follows because $\Ann(\pi(x))$ contains $\coLie(G)$, as $\pi(x) \in V(M)$. By \cref{level 2 period cinjecture for Hpp}, this implies $x \otimes \omega = 0$ in $\tilde{\cP}^2_{\Hpp}(M)$. Thus, there exists an exact sequence
\[
0 \to M_1 \to M^n \to M_2 \to 0
\]
with $n \leq 2$, $x \in \tilde{\Hpp}(M_1)$ and $\omega \in \tilde{N}(M_2)$. We claim $n = 2$. If $n = 1$, then $M_1$ must be one of $0$, $[0 \to \G_m]$, or $M$. The case $M_1 = 0$ is excluded as $x \neq 0$. If $M_1 = [0 \to \G_m]$, then $x\in\tilde{\Hpp}(M_1)=\tilde{\Hpp}(\G_m)$ and $\pi(x) \in \Hp(\G_m) = \hp(\G_m)$ which contradicts $\pi(x) \in V(\Z)_{\C_p}$, as both $\hp(\G_m)$ and $V(\Z)_{\C_p}$ are direct summands of $\Hp(M)$. The case $M_1 = M$ is excluded as $\omega \neq 0$. Hence $n = 2$, and:
\[
\tilde{\cP}^1_{\Hpp}(M) \neq \tilde{\cP}^2_{\Hpp}(M) \cong \cP_{\Hpp}\langle M \rangle.
\]
The latter identification is due to the fact that the $\Hpp$-period conjecture holds at depth 2 for $\langle M\rangle$ (\cref{level 2 period cinjecture for Hpp}).
An identical argument applies to $\Hp$-period $\displaystyle\int^{\Hp}_{\pi(x)}\omega=0$, showing:
\[
\tilde{\cP}^1_{\Hp}(M) \neq \tilde{\cP}^2_{\Hp}(M) \cong \cP_{\Hp}\langle M \rangle.
\]
By \cref{cor: P1=P for period conjecture}, this confirms the existence of nontrivial $\Hpp$- and $\Hp$-relations beyond bilinearity and functoriality.
\end{example}

\begin{prop}\label{prop 4.4.1}
Let $\alpha_1, \dots, \alpha_n$ be nonzero $\Hpp$-periods (resp., $\Hp$-periods) of $1$-motives $M_1, \dots, M_n$ over $\K$, and set $\cC_i = \langle M_i \rangle$. If $\Hom(\cC_i, \cC_j) = \Hom(\cC_j, \cC_i) = 0$ for $i \ne j$, then $\alpha_1, \dots, \alpha_n$ are $\K^u$-linearly (resp., $\bar{\Q}$-linearly) independent.
\end{prop}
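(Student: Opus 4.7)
The plan is to translate the claimed linear relation $\sum_i c_i\alpha_i=0$ into a vanishing in the depth-$2$ formal period space via \cref{level 2 period cinjecture for Hpp} (respectively \cref{depth 2 and 1 period cinjecture for Hp and hp}), exploit the Hom-vanishing hypothesis to split that space as a direct sum indexed by the $M_i$, and then apply the same depth-$2$ conjecture again to each $\langle M_i\rangle$ separately in order to conclude $c_i\alpha_i=0$ for all $i$.

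First I would write each $\alpha_i=\int^{\Hpp}_{x_i}\omega_i$ with $x_i\in\tilde{\Hpp}(M_i)$ and $\omega_i\in\tilde{N}(M_i)$, and set $M:=\bigoplus_i M_i$, $x:=\sum_i x_i\in\tilde{\Hpp}(M)=\bigoplus_i\tilde{\Hpp}(M_i)$, $\omega:=\sum_i c_i\omega_i\in\tilde{N}(M)=\bigoplus_i\tilde{N}(M_i)$. Because the comparison isomorphism $\varpi$ respects the direct-sum decomposition, off-diagonal pairings vanish and $\int^{\Hpp}_x\omega=\sum_i c_i\alpha_i=0$; hence $(x\otimes\omega)_M=0$ in $\tilde{\cP}^2_{\Hpp}(M)$ by \cref{level 2 period cinjecture for Hpp}.

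The structural heart of the argument is a decomposition lemma: under the Hom-vanishing hypothesis, any subobject $N\subseteq\bigoplus_i M_i^r$ in $\gMi(\K)$ splits uniquely as $N=\bigoplus_i N_i$ with $N_i\subseteq M_i^r$ in $\cC_i$, and similarly for quotients. I would prove this by induction on the number of summands: for $n=2$, take $A:=N\cap M_1^r\in\cC_1$, $B:=N\cap M_2^r\in\cC_2$; the quotient $N/(A\oplus B)$ has trivial intersection with each factor and therefore embeds, via each component projection, into both $M_1^r/A$ and $M_2^r/B$, so it defines an object simultaneously in $\cC_1$ and $\cC_2$; composing one embedding with the inverse of the other yields a non-zero morphism in $\Hom(\cC_1,\cC_2)=0$, forcing $N=A\oplus B$. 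Applying this lemma to every exact sequence $0\to M^{(1)}\to M^s\to M^{(2)}\to 0$ with $s\leq 2$ that generates $R_2(M)$, together with the fact that the off-diagonal terms $F(M_i)\otimes G(M_j)$ for $i\neq j$ already lie in $R_1(M)\subseteq R_2(M)$ via the split exact sequences $0\to M_i\to M\to M/M_i\to 0$, produces a canonical decomposition $\tilde{\cP}^2_{\Hpp}(M)=\bigoplus_i\tilde{\cP}^2_{\Hpp}(M_i)$. Under this splitting, $(x\otimes\omega)_M=0$ forces $c_i\,(x_i\otimes\omega_i)_{M_i}=0$ in each $\tilde{\cP}^2_{\Hpp}(M_i)$; invoking \cref{level 2 period cinjecture for Hpp} once more yields $c_i\alpha_i=0$ in $\cP_{\Hpp}\langle M_i\rangle$, and since $\alpha_i\neq 0$ this gives $c_i=0$. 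The $\Hp$-case is formally identical, using \cref{depth 2 and 1 period cinjecture for Hp and hp} and replacing $\K^u$ by $\bar{\Q}$.

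The main obstacle I anticipate is the base-change subtlety: the exact sequences produced in the proof of \cref{level 2 period cinjecture for Hpp} live over a finite extension $\K'/\K$, so the decomposition lemma must actually be applied in $\gMi(\K')$. One must therefore either verify that the hypothesis $\Hom(\cC_i,\cC_j)=0$ persists after base change to every such $\K'$, or else descend the resulting depth-$2$ relation back to $\K$ by a Galois-averaging argument before invoking the lemma.
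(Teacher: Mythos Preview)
Your proposal is correct and follows essentially the same approach as the paper: reduce to a vanishing in $\tilde{\cP}^2_{\Hpp}(M_1\oplus\cdots\oplus M_n)$ via \cref{level 2 period cinjecture for Hpp}, use the Hom-vanishing hypothesis to split this formal period space as $\bigoplus_i\tilde{\cP}^2_{\Hpp}(M_i)$, and conclude $c_i\alpha_i=0$ for each $i$. The paper's proof simply asserts the direct-sum decomposition of the depth-$2$ formal space without justification, whereas you supply the subobject-splitting lemma and flag the base-change subtlety---both of which the paper leaves implicit.
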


\begin{proof}
We prove the case $n = 2$; the general case follows by induction. The formal period space at depth $2$ satisfies:
\[
\tilde{\cP}^2_{\Hpp}(M_1 \oplus M_2) = \tilde{\cP}^2_{\Hpp}(M_1) \oplus \tilde{\cP}^2_{\Hpp}(M_2).
\]
If $\lambda_1 \alpha_1 + \lambda_2 \alpha_2 = 0$ for $\lambda_i \in \K^u$, then $(\lambda_1\alpha_1+\lambda_2\alpha_2)_{\tilde{\cP}^2_{\Hpp}( M_1\oplus M_2)}$ vanishes in  $\tilde{\cP}^i_{\Hpp}( M_1\oplus M_2)$ by the validity of the $\Hpp$-period conjecture at depth 2 (\cref{level 2 period cinjecture for Hpp}). But \[\tilde{\cP}^2_{\Hpp}(M_1\oplus M_2)=\tilde{\cP}^2_{\Hpp}( M_1)\oplus\tilde{\cP}^2_{\Hpp}( M_2)\cong\cP_{\Hpp}\langle M_1\rangle\oplus\cP_{\Hpp}\langle M_2\rangle,\] thus $\lambda_i \alpha_i = 0$, and hence $\lambda_i = 0$, since $\alpha_i \ne 0$. The same argument applies to $\Hp$-periods.
\end{proof}

\begin{cor}
Let $A$ be an abelian variety over $\K$ with good reduction at $p$, and let $\alpha_1$ be a nonzero $\Hpp$-period (resp., $\Hp$-period) of $A$, and $\alpha_2$ a nonzero $\Hpp$-period (resp., $\Hp$-period) of $\G_m$. Then $\alpha_1, \alpha_2$ are $\K^u$-linearly (resp., $\bar{\Q}$-linearly) independent.
\end{cor}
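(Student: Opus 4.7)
The plan is to deduce this corollary directly from \cref{prop 4.4.1} applied to the 1-motives $M_1 := [0 \to A]$ and $M_2 := [0 \to \G_m]$, both of which lie in $\gMi(\K)$ by hypothesis. The $\Hpp$- and $\Hp$-periods of $A$ and $\G_m$ in the sense of \cref{three pairings hp Hp Hpp} are precisely the ones attached to these 1-motives, so the only thing to verify is the vanishing of Hom spaces
\[
\Hom(\langle M_1 \rangle, \langle M_2 \rangle) = \Hom(\langle M_2 \rangle, \langle M_1 \rangle) = 0
\]
in the isogeny category $\gMi(\K)$.

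First, I would identify the objects of $\langle M_1 \rangle$ and $\langle M_2 \rangle$. Since the isogeny category is abelian and $\langle X \rangle$ is the smallest full abelian subcategory containing $X$, every object of $\langle M_i \rangle$ is (up to isogeny) a subquotient of a finite power $M_i^n$. The weight filtration of $1$-motives is strictly functorial, so subquotients of $[0 \to A^n]$ are pure of weight $-1$ and of abelian type; that is, they are of the form $[0 \to A']$ where $A'$ is an abelian subquotient of $A^n$ up to isogeny. Analogously, every object of $\langle M_2 \rangle$ is isogenous to $[0 \to T]$ for some subquotient torus $T$ of $\G_m^n$.

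Next, I would reduce morphisms in $\gMi(\K)$ between these objects to morphisms of commutative group schemes between an abelian variety and a torus. Both directions vanish by standard rigidity: any morphism from a proper connected group scheme to an affine one is constant, hence trivial as a homomorphism, which kills $\Hom(A', T)$; and conversely, a homomorphism $T \to A'$ has image simultaneously affine (as a quotient of $T$) and proper (as a closed subgroup of $A'$), hence trivial. It follows that $\Hom$ between any object of $\langle M_1 \rangle$ and any object of $\langle M_2 \rangle$ vanishes in either direction.

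With the Hom-vanishing established, \cref{prop 4.4.1} yields the $\K^u$-linear (respectively $\barQ$-linear) independence of $\alpha_1$ and $\alpha_2$. The only real point of attention is the verification that $\langle [0\to A]\rangle$ and $\langle [0\to \G_m]\rangle$ genuinely contain no mixed objects---this is automatic from the fact that the weight filtration splits on pure objects and that both $M_1$ and $M_2$ are already pure---after which the argument is immediate.
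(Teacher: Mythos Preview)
Your proposal is correct and follows the same approach as the paper, which simply notes that there are no nontrivial morphisms between $\G_m$ and $A$ and invokes \cref{prop 4.4.1}. You have filled in the details the paper leaves implicit---identifying the objects of $\langle [0\to A]\rangle$ and $\langle [0\to \G_m]\rangle$ via the weight filtration and spelling out the standard rigidity arguments for Hom-vanishing between abelian varieties and tori---but the strategy is identical.
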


\begin{proof}
Since there are no nontrivial morphisms between $\G_m$ and $A$, the conclusion follows from \cref{prop 4.4.1}.
\end{proof}

\subsection{1-motives associated with varieties}
Smooth varieties furnish further examples of $1$-motives and their associated $p$-adic periods. Following \cite{BS01}, let $X$ be an equidimensional variety over a field $K$ of characteristic $0$. Let $S \subset X$ denote its singular locus, $f: \tilde{X} \to X$ a resolution of singularities, and $\tilde{S}$ the reduced preimage of $S$. Consider a smooth compactification $\bar{X}$ of $X$ with boundary $Y = \bar{X} \setminus \tilde{X}$, and let $\bar{S}$ denote the Zariski closure of $\tilde{S}$.

The fpqc sheaf $T \mapsto \Pic(\bar{X}_T, Y_T)$ is representable by a $K$-group scheme locally of finite type, with $K$-points $\Pic(\bar{X}, Y)$ (\cite[Lemma 2.1]{BS01}). Let $\Pic^0(\bar{X}, Y)$ denote its identity component. Denote the smooth irreducible components of $Y$ by $Y_i$, and let $A(\bar{X}, Y) := \ker^0(\Pic^0(\bar{X}) \to \bigoplus_i \Pic^0(Y_i))$; this is an abelian variety, and we have the exact sequence:
\[
0 \to T(\bar{X}, Y) \to \Pic^0(\bar{X}, Y) \to A(\bar{X}, Y) \to 0,
\]
with $T(\bar{X}, Y)$ a torus. Thus, $\Pic^0(\bar{X},Y)$ should represent the semi-abelian part of the 1-motive associated with $X$, \ie $\W_{-1}(M)=\Pic^0(\bar{X},Y)$.

Let $\Div^0_{\bar{S}}(\bar{X}, Y)$ denote divisors $D$ supported on $\bar{S}$ disjoint from $Y$ and algebraically trivial in $\Pic^0(\bar{X}, Y)$. The kernel $\Div_{\tilde{S}/S}(\tilde{X}, Y)$ of $f_*: \Div_{\tilde{S}}(\tilde{X}) \to \Div_S(X)$ defines the group $\Div^0_{\bar{S}/S}(\bar{X}, Y) := \Div_{\tilde{S}/S}(\tilde{X}, Y) \cap \Div^0_{\bar{S}}(\bar{X}, Y)$.

\begin{defn}[\cite{BS01}]
The homological Picard $1$-motive of $X$ is
\[
\Pic^{-}(X) := [\Div^0_{\bar{S}/S}(\bar{X}, Y) \xrightarrow{u} \Pic^0(\bar{X}, Y)],
\]
where $u(D) = [D]$. The cohomological Albanese $1$-motive $\Alb^+(X)$ is its Cartier dual.
\end{defn}

\begin{example}
Let $C$ be a smooth curve and $D \subset C$ a subvariety of dimension $0$. For the homological Picard $1$-motive $\Pic^{-}(C)$ of $C$, $\W_{-1}(\Pic^{-}(C))$ is isomorphic to the generalized Jacobian $J(C)$ of $C$ (\cite[\S 1.8]{miyanishi_algebraic_2020}). Then the associated $1$-motive is
\[
[\Div^0(D) \to J(C)],
\]
where $\Div^0(D)$ is the group of degree-zero divisors on $D$ and $J(C)$ is the generalized Jacobian of $C$. For any  cohomology theory $H$, applying the long exact sequence for relative cohomology to the inclusion $C\to J(C)$ yields
\[
\begin{tikzcd}
H^0(J(C)) \arrow[r] \arrow[d, "f"] & H^0(D) \arrow[r] \arrow[d, Rightarrow, no head] & H^1(J(C), D) \arrow[r] \arrow[d, "h"] & H^1(J(C)) \arrow[d, "g"] \\
H^0(C) \arrow[r] & H^0(D) \arrow[r] & H^1(C, D) \arrow[r] & H^1(C)
\end{tikzcd}
\]
By the five lemma, the map $h$ is an isomorphism whenever both $f$ and $g$ are. According to \cite[Chapter V]{serrealgebraicgroupsandclaasfields}, this condition is satisfied for de Rham, singular, and étale cohomologies. In the case of crystalline cohomology over a field of characteristic $p \geq 3$, the result follows from \cite[Theorem B']{andreatta2005crystalline}. Consequently, all arithmetic information encoded in the $1$-motive can be transferred naturally to the pair $(C, D)$. In particular, our $\Q$-structures may be interpreted in terms of the homology of $(C, D)$, thereby allowing us to define $p$-adic periods for $(C, D)$ whenever the associated $1$-motive admits good reduction at $p$.
\end{example}

\begin{defn}
Let $X$ be a variety over a number field $\K$, with $\Pic^{-}(X)$ a $1$-motive of good reduction at $p$. An $\Hpp$-period (resp., $\Hp$-period, $\hp$-period) of $X$ is a $p$-adic number arising as an $\Hpp$-period (resp., $\Hp$-period, $\hp$-period) of $\Pic^{-}(X)$. Their spaces are denoted $\cP_{\Hpp}(X)$, $\cP_{\Hp}(X)$, and $\cP_{\hp}(X)$ respectively.
\end{defn}

When the $1$-motive associated with $X$ admits good reduction at $p$, the $p$-adic periods of $X$ arise from the $p$-adic integration pairing of the motive $\Pic^{-}(X)$. The vanishing of such periods admits a motivic classification, provided by \cref{level 2 period cinjecture for Hpp} and \cref{depth 2 and 1 period cinjecture for Hp and hp}, which describe the structure of all formal relations among them.

\begin{thm}
Let $X$ be a variety over $\K$ such that $M := \Pic^{-}(X)$ has good reduction at $p$. Let $\alpha$ be an $\Hpp$-period (resp., $\Hp$-period, $\hp$-period) of $X$, with
\[
\alpha = \int^{\Hpp}_x \omega = 0 \quad \text{(resp., } \int^{\Hp}_x \omega = 0,\, \int^{\hp}_x \omega = 0 \text{)}.
\]
Then there exists an exact sequence
\[
0 \to M_1 \to M^n \to M_2 \to 0
\]
of $1$-motives over a finite extension of $\K$ with good reduction at $p$, where $n \in \{1, 2\}$ (or $n = 1$ for $\hp$), $x \in \tilde{\Hpp}(M_1)$ (resp., $\Hp(M_1)$, $\hp(M_1)$), and $\omega \in \tilde{N}(M_2)$ (resp., $\Fil^1 \TdRv(M_2)_{\bar{\K}}$).
\end{thm}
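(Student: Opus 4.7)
The plan is to reduce the statement to a direct application of the depth $2$ (resp.\ depth $1$) period conjectures proved in \cref{level 2 period cinjecture for Hpp} and \cref{depth 2 and 1 period cinjecture for Hp and hp}. By definition of the $p$-adic periods of $X$, one has $\cP_{\Hpp}(X)=\cP_{\Hpp}(M)$, $\cP_{\Hp}(X)=\cP_{\Hp}(M)$, and $\cP_{\hp}(X)=\cP_{\hp}(M)$, where $M=\Pic^{-}(X)\in\gMi(\K)$. Consequently, a vanishing period $\alpha=\int^{\Hpp}_x\omega=0$ for $X$ is the same datum as a vanishing $\Hpp$-period of the 1-motive $M$, with $x\in\tilde{\Hpp}(M)$ and $\omega\in\tilde{N}(M)$; analogously for $\Hp$ and $\hp$.

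First, I would invoke the period conjecture at the correct depth. For the $\Hpp$ case, \cref{level 2 period cinjecture for Hpp} asserts that the evaluation map
\[
\tilde{\cP}^2_{\Hpp}(\gMi(\K))\longrightarrow \cP_{\Hpp}(\gMi(\K))
\]
is injective, so the vanishing of $\alpha$ forces $(x\otimes\omega)_{\tilde{\cP}^2_{\Hpp}(M)}=0$. For the $\Hp$ case, the depth $2$ statement of \cref{depth 2 and 1 period cinjecture for Hp and hp} produces the analogous vanishing in $\tilde{\cP}^2_{\Hp}(M)$, and for the $\hp$ case the depth $1$ statement produces a vanishing in $\tilde{\cP}^1_{\hp}(M)$.

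Next, I would unpack the definition of $\tilde{\cP}^i_{\cH}(M)$ (\cref{space of formal periods at depth i}): the vanishing of $(x\otimes\omega)$ in this quotient means precisely that the tensor can be written as a sum of contributions coming from short exact sequences
\[
0\to M_1\to M^n\to M_2\to 0
\]
with $n\leq i$, $x\in F(M_1)$ and $\omega\in G(M_2)$ for the relevant functors $F$, $G$. Thus $n\in\{1,2\}$ for the $\Hpp$ and $\Hp$ cases, and $n=1$ for the $\hp$ case, as stated. Since $M\in\gMi(\K)$ and $\gMi$ is abelian (so sub-objects and quotients of $M^n$ have good reduction at $p$), the resulting $M_1,M_2$ automatically lie in $\gMi$ over a finite extension of $\K$; this disposes of the good reduction claim.

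There is no substantive obstacle: the whole content of this theorem is already packaged in the main results of Section 5, and the proof is a translation plus unpacking of the formal definition of $\tilde{\cP}^i_{\cH}(M)$. The only point worth checking carefully is that the minor technical refinements supplied by \cref{cor 5.6.1}---in particular the assertion $n=1$ in the $\hp$ case---are inherited, which is immediate from the fact that $\hp(M)\subset\Hp(M)$ maps into the $\hp$-part of every subquotient appearing in the sequence, by functoriality of $\hp$ and of the $p$-adic logarithm.
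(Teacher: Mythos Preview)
Your overall strategy is right and matches the paper's intent: the theorem is a repackaging of what was already established in the proofs of \cref{level 2 period cinjecture for Hpp} and \cref{depth 2 and 1 period cinjecture for Hp and hp}. The paper does not give a separate proof; it simply points back to those results.

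However, your unpacking step contains a real gap. Injectivity of the evaluation map only tells you that $(x\otimes\omega)\in R_i(M)$. By \cref{space of formal periods at depth i}, $R_i(M)$ is the \emph{span} of elements $\sum_{j=1}^m \nu_j\otimes\gamma_j$ attached to exact sequences $0\to M'\to M^m\to M''\to 0$ with $m\le i$, $\nu_j\in F(M')$, $\gamma_j\in G(M'')$. So $x\otimes\omega$ is a priori a linear combination of several such generators, involving possibly many different exact sequences and many different $\nu_j,\gamma_j$. This does \emph{not} directly produce a \emph{single} exact sequence $0\to M_1\to M^n\to M_2\to 0$ with the \emph{original} $x$ lying in $F(M_1)$ and the \emph{original} $\omega$ lying in $G(M_2)$, which is precisely what the theorem asserts.

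The fix is to bypass the formal period space entirely and cite the \emph{proofs} of \cref{level 2 period cinjecture for Hpp} and \cref{depth 2 and 1 period cinjecture for Hp and hp} rather than their statements. Those proofs are constructive: starting from a single vanishing $\int^{\Hpp}_x\omega=0$ (resp.\ $\int^{\Hp}$, $\int^{\hp}$), they apply \cref{p-adic subgroup theorem for Fontaine pairing} (and \cref{cor 5.6.1} for the $\hp$ case) to $\pi(x)\in\Hp(M)$ and explicitly build the exact sequence \eqref{exact seq in proof of period conjecture} with $x\in\tilde{\Hpp}(M_1)$ and $\omega\in\tilde N(M_2)$ (resp.\ $x\in\Hp(M_1)$, $\omega\in\coLie(G_2)_{\bar\K}$; resp.\ $x\in\hp(M_1)$ with $n=1$). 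That construction \emph{is} the content of the present theorem, so the correct one-line proof is: apply the argument in the proof of \cref{level 2 period cinjecture for Hpp} (resp.\ \cref{depth 2 and 1 period cinjecture for Hp and hp}) to $M=\Pic^{-}(X)$.
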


%%%%%%%%%%%%%%%%%%%%%%%%%%%%%%%%%%%%%%%%%%%%%%%%%%%%%%%%%%%%%%%%%%%%%%%%%
%%
%% References
%%
%%%%%%%%%%%%%%%%%%%%%%%%%%%%%%%%%%%%%%%%%%%%%%%%%%%%%%%%%%%%%%%%%%%%%%%%%%%

\bibliographystyle{alpha}
\bibliography{refs.bib}

\end{document}